\documentclass[12pt]{amsart}
\usepackage{a4wide,enumerate,xcolor}
\usepackage{amsmath,graphicx}
\allowdisplaybreaks

\let\pa\partial
\let\na\nabla
\let\eps\varepsilon
\newcommand{\N}{{\mathbb N}}
\newcommand{\R}{{\mathbb R}}
\newcommand{\diver}{\operatorname{div}}

\newcommand{\T}{{\mathcal T}}
\newcommand{\E}{{\mathcal E}}
\newcommand{\F}{{\mathcal F}}
\newcommand{\m}{\operatorname{m}}
\newcommand{\dist}{{\operatorname{d}}}
\newcommand{\dd}{\mathrm{d}}

\newtheorem{theorem}{Theorem}
\newtheorem{lemma}[theorem]{Lemma}
\newtheorem{proposition}[theorem]{Proposition}
\newtheorem{remark}[theorem]{Remark}


\begin{document}

\title[BDF2 finite-volume scheme]{A convergent entropy-dissipating BDF2 finite-volume \\
scheme for a population cross-diffusion system }

\author[A. J\"ungel]{Ansgar J\"ungel}
\address{Institute of Analysis and Scientific Computing, Technische Universit\"at Wien,
Wiedner Hauptstra\ss e 8--10, 1040 Wien, Austria}
\email{juengel@tuwien.ac.at}

\author[M. Vetter]{Martin Vetter}
\address{Institute of Analysis and Scientific Computing, Technische Universit\"at Wien,
Wiedner Hauptstra\ss e 8--10, 1040 Wien, Austria}
\email{martin.vetter@tuwien.ac.at}

\date{\today}

\thanks{The authors acknowledge partial support from
the Austrian Science Fund (FWF), grants P33010 and F65.
This work has received funding from the European
Research Council (ERC) under the European Union's Horizon 2020 research and
innovation programme, ERC Advanced Grant no.~101018153.}

\begin{abstract}
A second-order backward differentiation formula (BDF2) finite-volume discretization for a nonlinear
cross-diffusion system arising in population dynamics is studied. The numerical scheme
preserves the Rao entropy structure and conserves the mass. The existence and
uniqueness of discrete solutions and their large-time behavior as well as the convergence of the scheme are proved. The proofs are based on the G-stability of the BDF2 scheme,
which provides an inequality for the quadratic Rao entropy and hence suitable a priori
estimates. The novelty is the extension of this inequality to the system case.
Some numerical experiments in one and two space dimensions underline the theoretical results.
\end{abstract}

\keywords{Cross-diffusion equations, Rao entropy, discrete entropy dissipation,
linear multistep method, finite-volume method, 
population dynamics.}

\subjclass[2000]{65L06, 65M08, 65M12, 35Q92, 92D25.}

\maketitle


\section{Introduction}\label{sec.intro}

The design of structure-preserving finite-volume schemes for parabolic equations 
is fundamental to describe accurately the behavior of the numerical solutions to these equations.
In the literature, usually implicit Euler time discretization
are used to derive such schemes; see, e.g., \cite{ABR11,BCH20,Bes12,CaGa20,JuZu20}.
However, implicit Euler schemes are only first order accurate in time, 
while finite-volume implementations often show second-order accuracy in space 
\cite{CaGa20,JuZu22} (also see \cite{DrNa18} for an analytical result).
In order to match the convergence rates in space and time, there is the need to
design second-order time approximations, which lead to structure-preserving and
convergent schemes. Some works suggest higher-order time discretizations 
(e.g.\ \cite{CaEz17,DWZZ20,Emm09,JuMi15,MaPl19}), 
but they are only concerned with semidiscrete equations or
different numerical methods, or they do not contain any numerical analysis. 
In this paper, we propose a second-order BDF two-point flux approximation
finite-volume scheme, which conserves the mass and
dissipates the Rao entropy, for a nonlinear
cross-diffusion system arising in population dynamics.
The quadratic structure of the Rao entropy allows us to extend the
G-stability theory of Dahlquist to the system case, leading to
existence, uniqueness, and convergence results.

The dynamics of the population density $u_i(x,t)$ of the $i$th species is modeled by the cross-diffusion equation
\begin{equation}\label{1.eq}
  \pa_t u_i = \diver(\gamma\na u_i + u_i\na p_i(u)), \quad 
	p_i(u) := \sum_{j=1}^n a_{ij}u_j\quad\mbox{in }\Omega,\ t>0,\
    i=1,\ldots,n,
\end{equation}
where $\Omega\subset\R^d$ ($d\ge 1$) is a bounded domain and $u=(u_1,\ldots,u_n)$.
This model was derived rigorously from a moderately interacting stochastic particle system 
in a mean-field-type limit \cite{CDJ19}. 
The parameter $\gamma>0$ is related to the stochastic diffusion of the
particle system, and $a_{ij}\in\R$ describes the strength of the repulsive or attractive interaction  
between the $i$th and the $j$th species. We impose initial and no-flux boundary conditions,
\begin{equation}\label{1.bic}
  u_i(0)=u_i^0\quad\mbox{in }\Omega, \quad \na u_i\cdot\nu=0\quad\mbox{on }\pa\Omega,\ t>0,\ i=1,\ldots,n,
\end{equation}
where $\nu$ is the exterior unit normal vector to $\pa\Omega$.
In the absence of the diffusion parameter $\gamma$,
\eqref{1.eq} can be interpreted as a mass conservation equation with the partial
velocity $\na p_i(u)$, which is determined according to Darcy's law by the partial
pressure $p_i(u)$. System \eqref{1.eq} in one space dimension
for two species, $\gamma=0$, and $\det(a_{ij})=0$ was 
first studied in \cite{BGHP85}, proving the global existence of segregated solutions
(i.e., the supports of $u_1$ and $u_2$ do not intersect for all times
if this holds true initially). This result was generalized to arbitrary space dimensions
in \cite{BHIM12}, still for two species.
For an arbitrary number of species, 
the existence of global weak solutions to \eqref{1.eq}--\eqref{1.bic} was shown in \cite[Appendix B]{JPZ22} if $\det(a_{ij})>0$ and the existence of 
local strong solutions was proved in \cite{DHJ22} if $\det(a_{ij})=0$. 

The matrix $A=(a_{ij})\in\R^{n\times n}$ does not need to be symmetric nor positive
definite so that the diffusion matrix associated to \eqref{1.eq} is generally neither
symmetric nor positive definite too. A minimal requirement for local solvability
at the linear level is the parabolicity in the sense of Petrovskii, 
which is satisfied if all eigenvalues of $A$ have a positive real part
\cite{Ama93}. 
Global solvability is guaranteed under the detailed-balance condition, i.e., there
exist $\pi_1,\ldots,\pi_n>0$ such that $\pi_i a_{ij}=\pi_j a_{ji}$ for all $i\neq j$ \cite[Theorem 17]{JPZ22}.
This condition also appears in the theory of time-continuous Markov chains generated
by $A$, and $(\pi_1,\ldots,\pi_n)$ is the associated invariant measure.
We assume this condition throughout this paper.
It implies that $\widetilde{u}_i:=\pi_i u_i$ solves the system
$$
  \pa_t\widetilde{u}_i = \diver\bigg(\widetilde{u}_i
  \sum_{j=1}^n\frac{a_{ij}}{\pi_j} \na\widetilde{u}_j\bigg),
$$
with a symmetric positive definite matrix $(a_{ij}/\pi_j)$. Consequently, 
we may assume, without loss of generality, that the matrix $A$ in
\eqref{1.eq} is already symmetric and positive definite.

Due to the nonlinear cross-diffusion structure, the analysis of \eqref{1.eq} is
highly nontrivial. The key idea of the analysis is to exploit the entropy structure
of \eqref{1.eq}. This means that there exist Lyapunov functionals, called entropies, that are
nonincreasing in time along solutions to \eqref{1.eq}--\eqref{1.bic} and that
provide gradient estimates. In the present situation, these functionals are given by the
Boltzmann (or Shannon) entropy $H_B$ and the Rao entropy $H_R$,
$$
  H_B(u) = \sum_{i=1}^n\int_\Omega u_i(\log u_i-1)\dd x, \quad 
	H_R(u) = \frac12\int_\Omega u^TAu\dd x,
$$
giving formally the entropy equalities
\begin{align}\label{1.Bei}
  \frac{\dd H_B}{\dd t} + \int_\Omega\bigg(4\gamma\sum_{i=1}^n|\na\sqrt{u_i}|^2
	+ \sum_{i,j=1}^n a_{ij}\na u_i\cdot\na u_j\bigg)\dd x &= 0, \\
	\frac{\dd H_R}{\dd t} + \int_\Omega\bigg(\gamma\sum_{i,j=1}^n
    a_{ij}\na u_i\cdot\na u_j
	+ \sum_{i=1}^n u_i|\na p_i(u)|^2\bigg)\dd x &= 0, \label{1.Rei}
\end{align}
and thus providing gradient bounds for $u_i$. 
The Boltzmann entropy is related to the thermodynamic entropy of the system, while
the Rao entropy measures the functional diversity of the species \cite{Rao82}.

Since the Boltzmann entropy $H_R$ is convex, the implicit Euler scheme preserves the
entropy inequality \eqref{1.Bei} (see, e.g., \cite{JuZu22} for a related system).
The logarithmic structure of $H_R$ seems to prevent
entropy stability in higher-order schemes like BDF or Crank--Nicolson approximations
\cite{GuSh20}. However, thanks to the quadratic structure of the Rao entropy $H_R$, 
we are able to prove stability of $H_R$ for the BFD2 approximation.
To explain the idea, let $\T$ be a triangulation of $\Omega$ into control volumes
$K\subset\Omega$ with measure $\m(K)$ and let $\Delta t$ be the time step size. 
Furthermore, let $u_{i,K}^k$ be an approximation of $u_i(x_K,t_k)$, where
$x_K\in K$ and $t_k=k\Delta t$.
We write the BDF2 discretization of \eqref{1.eq} as
\begin{equation}\label{1.num}
  \frac{\m(K)}{\Delta t}\bigg(\frac32 u_{i,K}^k - 2u_{i,K}^k + \frac12 u_{i,K}^{k-2}\bigg)
	+ \sum_{\sigma\in\E_K}\F_{i,K,\sigma}^k = 0,
\end{equation}
where $\E_K$ is the set of the edges (or faces) of $K$ and $\F_{i,K,\sigma}^k$ is the numerical
flux, defined in \eqref{2.flux} below. The usual idea to derive a priori bounds
is to choose the test function $u_{i,K}^k$ in \eqref{1.num} and to use the inequality
\begin{equation}\label{1.gstab}
  \bigg(\frac32 u_{i,K}^k - 2u_{i,K}^k + \frac12 u_{i,K}^{k-2}\bigg)u_{i,K}^k 
	\ge h_0(u_{i,K}^k,u_{i,K}^{k-1}) - h_0(u_{i,K}^{k-1},u_{i,K}^{k-2}),
\end{equation}
where
$$
	h_0(a,b) = \frac14\big(5a^2 - 4ab
	+ b^2\big) = \frac14\begin{pmatrix} a \\ b \end{pmatrix}^T
	\begin{pmatrix} 5 & -2 \\ -2 & 1 \end{pmatrix}
	\begin{pmatrix} a \\ b\end{pmatrix}, \quad a,b\in\R,
$$
is a positive definite quadratic form.
Assuming that $\F_{i,K,\sigma}^k u_{i,K}^k$ can be bounded from below, this gives a priori 
bounds for $(u_{i,K}^k)^2$. Inequality \eqref{1.gstab} can be explained in the
framework of Dahlquist's G-stability theory \cite{Hil97}.

In our case, we need the test function $p_i(u_K^k)$ to derive the discrete
analog of \eqref{1.Rei}. Then the question is whether there exists a functional
$h(u,v)$ such that
\begin{equation}\label{1.gstab2}
  \sum_{i=1}^n\bigg(\frac32 u_{i,K}^k - 2u_{i,K}^k + \frac12 u_{i,K}^{k-2}\bigg)p_i(u_K^k)
	\ge h(u_{K}^k,u_{K}^{k-1}) - h(u_{K}^{k-1},u_{K}^{k-2}).
\end{equation}
Note that we need to sum over all species in this inequality.
The main novelty of this paper is the observation that the scalar inequality
\eqref{1.gstab} can be extended to inequality \eqref{1.gstab2} for vectors 
$u$, $v\in\R^n$.
Indeed, we show in Lemma \ref{lem.bdf2} that \eqref{1.gstab2} holds for 
\begin{equation}\label{1.h}
  h(u,v) = \frac14(5u^TAu - 4u^TAv + v^TAv)
	= \frac14\begin{pmatrix} u \\ v\end{pmatrix}^T
	\begin{pmatrix} 5A & -2A \\ -2A & A \end{pmatrix}
	\begin{pmatrix} u \\ v\end{pmatrix}
\end{equation}
with $u,v\in\R^n$. Introducing the discrete Rao entropy by 
$H(u,v)=\sum_{K\in\T}\m(K)h(u,v)$ for piecewise
constant functions $u$ and $v$, this yields the
BDF2 analog of the Rao entropy inequality
$$
  H(u^k,u^{k-1}) + c\Delta t|u^k|_{1,2,\T}^2 \le H(u^{k-1},u^{k-2})\quad\mbox{for }k\ge 2,
$$
where $|\cdot|_{1,2,\T}$ is the discrete $H^1(\Omega)$ norm, defined in Section 
\ref{sec.func}, and $c>0$ depends on the smallest eigenvalue of $A$ and
on $\gamma$. This inequality is the key for proving our main results:
\begin{itemize}
\item Existence and uniqueness of discrete solutions: 
There exists a solution $u_i^k$ to the BDF2 finite-volume scheme
\eqref{1.num}, which
conserves the mass $\sum_{K\in\T}\m(K)u_{i,K}^k$ of the $i$th species and
dissipates the discrete Rao entropy. Moreover, the solution
is unique if $\Delta t/(\Delta x)^{d+2}$ is sufficiently small,
where $\Delta x$ is the size of the mesh (Theorem \ref{thm.ex}).
This unusual quotient comes from an inverse inequality needed to bound 
higher-order norms.
\item Large-time behavior: The discrete solution $u_i^k$ converges for large times $k\to\infty$ to the constant
steady state $\bar{u}_i=\m(\Omega)^{-1}\int_\Omega u_i^0\dd x$
with a quasi-explicit exponential rate (Theorem \ref{thm.large}).
The proof uses the well-established relative entropy (or energy) method, but the two-step scheme requires an iteration of this argument.
\item Convergence of the discrete scheme: 
The fully discrete solution converges to a solution to the semidiscrete problem
if $\Delta x\to 0$, and the semidiscrete solution converges to a weak
(nonnegative) solution to \eqref{1.eq}--\eqref{1.bic} as $\Delta t\to 0$ (up to 
subsequences; see Theorem \ref{thm.conv}).
\item Convergence rate: If the solution to \eqref{1.eq}--\eqref{1.bic}
is sufficiently smooth, the semidiscrete solution converges
with order two, as expected for the BDF2 scheme (Theorem \ref{thm.second}).
\end{itemize}

The paper is organized as follows. The numerical scheme and our main results are detailed in Section \ref{sec.scheme}. In Section \ref{sec.ex}, we prove the existence
and uniqueness of a discrete solution, while its large-time behavior is analyzed in
Section \ref{sec.large}. Section \ref{sec.conv} is devoted to the convergence of the full scheme, and the second-order convergence in time is
verified in Section \ref{sec.second}. 
Finally, we present in Section \ref{sec.num} some
numerical examples in one and two space dimensions.


\section{Numerical scheme and main results}\label{sec.scheme}

We need some simple auxiliary results and some notation before formulating the numerical scheme and the main results.

\subsection{Some linear algebra}

We denote by $|\cdot|$ the Euclidean norm on $\R^n$. Given a symmetric positive matrix
$A\in\R^{n\times n}$, we introduce the weighted norm $|u|_A^2:=u^TAu$ and the 
weighted inner product $(u,v)_A:=u^TAv$ for $u,v\in\R^n$. With this notation, the 
discrete Rao entropy density can be written as
\begin{equation}\label{2.h}
  h(u,v) = \frac14(5|u|_A^2 - 4(u,v)_A + |v|_A^2) \quad\mbox{for }u,v\in\R^n.
\end{equation}
Denoting by $\lambda_m>0$ the
smallest and by $\lambda_M>0$ the largest eigenvalue of $A$, it holds that
\begin{equation}\label{2.normineq}
  \lambda_m|u|^2 \le |u|_A^2 \le \lambda_M|u|^2 \quad\mbox{for }u\in\R^n.
\end{equation}
Let $\lambda_1,\ldots,\lambda_n>0$ be the eigenvalues of $A$. Then
the eigenvalues of the matrix in \eqref{1.h} equal $(3\pm\sqrt{8})\lambda_i$ for
$i=1,\ldots,n$. This shows that for $u,v\in\R^n$,
\begin{equation}\label{2.hineq}
\begin{aligned}
  \frac14(3-\sqrt{8})(|u|_A^2+|v|_A^2) &\le h(u,v) 
	\le \frac14(3+\sqrt{8})(|u|_A^2+|v|_A^2), \\
  \frac14(3-\sqrt{8})\lambda_m(|u|^2+|v|^2) &\le h(u,v) 
	\le \frac14(3+\sqrt{8})\lambda_M(|u|^2+|v|^2).
\end{aligned}
\end{equation}

\subsection{Spatial domain and mesh}

Let $d\ge 1$ and let $\Omega\subset\R^d$ be a bounded polygonal (if $d=2$) or polyhedral 
(if $d\ge 3$) domain.
We associate to this domain an admissible mesh, given by
(i) a family $\T$ of open polygonal or polyhedral 
control volumes, which are also called cells,  
(ii) a family $\E$ of edges (or faces if $d\ge 3$),
and (iii) a family of points $(x_K)_{K\in\T}$ associated to the control volumes
and satisfying \cite[Definition 9.1]{EGH00}. This definition implies that
the straight line $\overline{x_Kx_L}$ between two centers of neighboring cells
is orthogonal to the edge (or face) $\sigma=K|L$ between two cells.
For instance, triangular meshes with acute angles, Delaunay meshes, rectangular
meshes, and Vorono\"{\i} meshes satisfy this condition \cite[Example 9.2]{EGH00}.
The size of the mesh is given by $\Delta x = \max_{K\in\T}\operatorname{diam}(K)$.
The family of edges $\E$ is assumed to consist of interior edges
$\E_{\rm int}$ satisfying $\sigma\subset\Omega$ and boundary edges $\sigma\in\E_{\rm ext}$
satisfying $\sigma\subset\pa\Omega$. For a given $K\in\T$, $\E_K$ denotes the 
set of edges of $K$ with $\E_K= \E_{{\rm int},K}\cup\E_{{\rm ext},K}$.
For any $\sigma\in\E$, there exists at least one cell $K\in\T$ such that
$\sigma\in\E_K$. 

For given $\sigma\in\E$, we define the distance
$$
  \dist_\sigma = \begin{cases}
	\dist(x_K,x_L) &\quad\mbox{if }\sigma=K|L\in\E_{{\rm int},K}, \\
	\dist(x_K,\sigma) &\quad\mbox{if }\sigma\in\E_{{\rm ext},K},
	\end{cases}
$$
where d is the Euclidean distance in $\R^d$, and the transmissibility coefficient
\begin{equation}\label{2.trans}
  \tau_\sigma = \frac{\m(\sigma)}{\dist_\sigma},
\end{equation}
where $\m(\sigma)$ denotes the $(d-1)$-dimensional 
Lebesgue measure of $\sigma$. 
We suppose the following mesh regularity condition: There exists
$0<\zeta\le 1/2$ such that for all $K\in\T$ and $\sigma\in\E_K$,
\begin{equation}\label{2.regul}
  \dist(x_K,\sigma)\ge \zeta \dist_\sigma.
\end{equation}
This is equivalent to 
$$
  \eta\le\frac{\dist(x_K,\sigma)}{\dist(x_L,\sigma)}\le \frac{1}{\eta}
	\quad\mbox{for all }\sigma=K|L,
$$
where $\eta=\zeta/(1-\zeta)\in(0,1]$. The statement follows by observing that
$\dist(x_K,\sigma)+\dist(x_L,\sigma)=\dist(x_K,x_L)$ holds, which is a consequence
of the orthogonality of $\sigma=K|L$ and $\overline{x_Kx_L}$.
Hence, the mesh regularity \eqref{2.regul} means that the mesh is locally quasi-uniform.
A consequence of the mesh regularity is the following estimate
\begin{equation}\label{2.sum}
  \sum_{\sigma\in\E_{{\rm int},K}}\m(\sigma)\dist_\sigma
  \le \frac{1}{\zeta}\sum_{\sigma\in\E_{{\rm int},K}}\m(\sigma)
  \dist(x_K,\sigma)	= \frac{d}{\zeta}\m(K) 
  \quad\mbox{for }K\in\T,
\end{equation}
where we used in the last step the formula for the volume of a 
(hyper-)pyramid.

\subsection{Function spaces}\label{sec.func}

Given a triangulation $\T$, let $T>0$, $N_T\in\N$ and introduce the time step size $\Delta t=T/N_T$
and the time steps $t_k=k\Delta t$ for $k=0,\ldots,N_T$. 
We set $\Omega_T=\Omega\times(0,T)$.
The space of piecewise constant functions is defined by
$$
  V_\T = \bigg\{v: \Omega\to\R:\exists (v_K)_{K\in\T}\subset\R,\
	v(x)=\sum_{K\in\T}v_K\mathbf{1}_K(x)\bigg\},
$$
where $\mathbf{1}_K$ is the indicator function on $K$. To define a norm
on this space, we define for $K\in\T$, $\sigma\in\E_K$,
$$
  v_{K,\sigma} = \begin{cases}
	v_L &\quad\mbox{if }\sigma=K|L\in\E_{{\rm int},K}, \\
	v_K &\quad\mbox{if }\sigma\in\E_{{\rm ext},K},
	\end{cases} \quad
	\textrm{D}_{K,\sigma} v := v_{K,\sigma}-v_K, \quad 
	\textrm{D}_\sigma v := |\mathrm{D}_{K,\sigma} v|.
$$
Let $1\le q<\infty$ and $v\in V_\T$.
The discrete $W^{1,q}(\Omega)$ norm on $V_\T$ is given by 
\begin{align*}
  & \|v\|_{1,q,\T} = \big(\|v\|_{0,q,\T}^q+|v|_{1,q,\T}^q\big)^{1/q}, 
	\quad\mbox{where} \\
	& \|v\|_{0,q,\T}^q = \sum_{K\in\T}\m(K)|v_K|^q, \quad
  |v|_{1,q,\T}^q = \sum_{\sigma\in\E_{\rm int}}\m(\sigma)\dist_\sigma
	\bigg|\frac{\text{D}_{\sigma} v}{\dist_\sigma}\bigg|^q\quad\mbox{for }v\in V_\T.
\end{align*}
When $q=\infty$, we define $|v|_{1,\infty,\T}=\max_{\sigma\in\E_{\rm int}}
|\mathrm{D}_\sigma v|/\dist_\sigma$.
If $v=(v_1,\ldots,v_n)\in V_\T^n$ is a vector-valued function, 
we write for notational convenience
$$
  \|v\|_{0,q,\T} = \sum_{i=1}^n\|v_i\|_{0,q,\T}, \quad
	\|\na v\|_{0,q,\T} = \sum_{i=1}^n\|\na v_i\|_{0,q,\T}.
$$
We associate to the discrete $W^{1,q}$ norm a dual norm with respect 
to the $L^2$ inner product:
$$
  \|v\|_{-1,q,\T} = \sup\bigg\{\int_\Omega vw \dd x: w\in V_\T,\
	\|w\|_{1,q,\T}=1\bigg\}.
$$

Finally, we introduce the space $V_{\T,\Delta t}$ of piecewise constant
functions with values in $V_\T$,
\begin{equation*}
  V_{\T,\Delta t} = \bigg\{v:\Omega_T\to\R:
	\exists(v^k)_{k=1,\ldots,N_T}\subset V_\T,\
	v(x,t) = \sum_{k=1}^{N_T} v^k(x)\mathbf{1}_{[t_{k-1},t_k)}(t)\bigg\},
\end{equation*}
equipped with the $L^2(0,T;H^1(\Omega))$ norm 
$$
  \|v\|_{L^2(0,T;H^1(\Omega))} 
	= \left(\sum_{k=1}^{N_T}\Delta t \|v^k\|_{1,2,\T}^{2}\right)^{1/2} \quad 
	\mbox{for all }v\in V_{\T,\Delta t}.
$$ 

\subsection{Discrete gradient}

The discrete gradient is defined on a dual mesh. For this, we 
define the cell $T_{K,\sigma}$ of the dual mesh for $K\in\T$ and $\sigma\in\E_K$:
\begin{itemize}
\item ``Diamond'': Let $\sigma=K|L\in \E_{{\rm int},K}$. Then $T_{K,\sigma}$ is that cell
whose vertices are given by $x_K$, $x_L$, and the end points of the edge $\sigma$. In higher dimensions, they might be (double) (hyper-)pyramids.
\item ``Triangle'': Let $\sigma\in\E_{{\rm ext},K}$. Then $T_{K,\sigma}$ is that cell
whose vertices are given by $x_K$ and the end points of the edge $\sigma$.
\end{itemize}
The union of all ``diamonds'' and ``triangles'' $T_{K,\sigma}$ equals the domain $\Omega$
(up to a set of measure zero).
The property that the straight line $\overline{x_Kx_L}$ is orthogonal to the edge 
$\sigma=K|L$ implies that
\begin{equation*}
  \m(\sigma)\dist(x_K,x_L)=d\m(T_{K,\sigma})\quad\mbox{for all }
	\sigma=K|L\in\E_{\rm int}.
\end{equation*}
The approximate gradient of $v\in V_{\T,\Delta t}$ is then defined by
$$
  \na^{\T} v(x,t) = \frac{\m(\sigma)}{\m(T_{K,\sigma})}
	\mathrm{D}_{K,\sigma}(v^k)\nu_{K,\sigma}
	\quad\mbox{for }x\in T_{K,\sigma},\ t\in(t_{k-1},t_k],
$$
where $\nu_{K,\sigma}$ is the unit vector that is normal to $\sigma$ and
points outwards of $K$. 

\subsection{Numerical scheme}

The initial functions are approximated by their $L^2(\Omega)$-orthogo\-nal projection
on $V_\T$:
\begin{equation}\label{2.init}
  u_{i,K}^0 = \frac{1}{\m(K)}\int_K u_i^0(x)\dd x \quad\mbox{for all } K\in\T,\ 
	i=0,\ldots,n.
\end{equation}
Let $u_K^{k-1}=(u_{1,K}^{k-1},\ldots,u_{n,K}^{k-1})$ 
for $K \in \T$ be given. Since the BDF2 scheme is a two-step method, we need a first
time step which is computed from the implicit Euler method. The following
time steps are determined from the BDF2 method. The finite-volume scheme reads as
\begin{align}
  \frac{\m(K)}{\Delta t}(u_{i,K}^1-u_{i,K}^0) 
	+ \sum_{\sigma\in\E_K}\F_{i,K,\sigma}^1 &= 0, \label{2.sch1} \\
	\frac{\m(K)}{\Delta t}\bigg(\frac32 u_{i,K}^k - 2u_{i,K}^{k-1} + \frac12 u_{i,K}^{k-2}\bigg)
	+ \sum_{\sigma\in\E_K}\F_{i,K,\sigma}^k &= 0, \quad k\ge 2, \label{2.sch2}
\end{align}
for $i=1,\ldots,n$, $K\in\T$, and the numerical fluxes are given by
\begin{equation}\label{2.flux}
  \F_{i,K,\sigma}^k = -\tau_\sigma\big(\gamma\textrm{D}_{K,\sigma}u_i^k
	+ (u_{i,\sigma}^k)^+\textrm{D}_{K,\sigma} p_i(u^k)\big),
\end{equation}
where $\tau_\sigma$ is defined in \eqref{2.trans} and $z^+=\max\{0,z\}$ denotes the positive part of $z\in\R$.
Finally, the so-called mobility is given by
\begin{equation}\label{2.mean}
  u_{i,\sigma}^k = M(u_{i,K}^k,u_{i,L}^k)\quad\mbox{for }\sigma=K|L,
  \quad u_{i,\sigma}^k = 0\quad\mbox{else}, 
\end{equation} 
where $M$ is a general mean function
satisfying
\begin{itemize}
\item[(i)] $M:[0,\infty)^2\to[0,\infty)$ is Lipschitz continuous, satisfies
$M(u,u)=u$ (consistency), and has linear growth in the sense $M(u,v)\le |u|+|v|$ for $u,v\ge 0$.
\item[(ii)] There exists $C>0$ such that $|M(u,v)-u|\le C|u-v|$
for all $u,v\ge 0$.
\end{itemize}
Examples for $M$ are $M(u,v)=(u+v)/2$ or $M(u,v)=\max\{u,v\}$.
Note that we do not need logarithmic mean functions like in \cite{JuZu22},
since we do not use the chain rule in the cross-diffusion part, so that
we can use simpler expressions.

\begin{remark}[Nonnegativity]\rm
We truncate the mobility by $(u_{i,\sigma}^k)^+$ in the numerical flux
\eqref{2.flux} to ensure the discrete Rao entropy inequality (see \eqref{2.ei.bdf} below).
Indeed, when testing \eqref{2.sch2} with $p_i(u^k)$, we need that the sum
$\sum_{\sigma\in\E_{\rm int}}\tau_\sigma(u_{i,\sigma}^k)^+|\mathrm{D}_{K,\sigma}p_i(u^k)|^2$
is nonnegative. Unfortunately, the quadratic
Rao entropy does not allow us to prove the nonnegativity of the discrete solution, and standard maximum principle arguments do not apply here,
so that the truncation cannot be removed.
A positivity-preserving BDF2 finite-difference scheme was proposed
in \cite{CWWW19}, but the proof relies on discrete $L^\infty(\Omega)$ bounds
for $u^{k-1}$, which are not available in our case. 
Also the Shannon entropy does not help (as in \cite{JuZu22}), 
since it is not compatible with the BDF2 discretization.
Indeed, when we wish to derive a discrete analog of \eqref{1.Bei}, 
we need a finite continuous functional $h(u,v)$ satisfying
$h(u,u)=\sum_{i=1}^n u_i(\log u_i-1)$ (consistency condition) such that
$$
  \sum_{i=1}^n\bigg(\frac32 u_{i,K}^k - 2u_{i,K}^{k-1}
  + \frac12 u_{i,K}^{k-2}\bigg)\log u_{i,K}^k 
  \ge h(u_{K}^k,u_K^{k-1}) - h(u_K^{k-1},u_K^{k-2}). 
$$
If $u_{i,K}^k=u_{i,K}^{k-1}\to 0$ and $u_{i,K}^{k-2}>0$
for all $i\in\{1,\ldots,n\}$, the previous
inequality converges to $-\infty\ge -h(0,u_K^{k-2})$, which is absurd.
At least, we obtain nonnegative solutions in the limit 
$(\Delta x,\Delta t)\to 0$; see Theorem \ref{thm.conv} below.
\end{remark}

\begin{remark}[Discrete integration by parts]\rm
The fluxes $\mathcal{F}_{i,K,\sigma}^k$ are consistent approximations
of the exact fluxes through the edges if we impose the conservation
$\F_{i,K,\sigma}+\F_{i,L,\sigma}=0$ for all edges $\sigma=K|L$, requiring that
they vanish on the Neumann boundary edges, i.e., $\F_{i,K,\sigma}=0$ for all
$\sigma\in\E_{{\rm ext},K}$. In particular, for $v=(v_K)\in V_\T$,
the following discrete integration-by-parts formulas hold:
\begin{equation}\label{2.dibp}
  \sum_{K\in\T}\sum_{\sigma\in\E_K}\F_{i,K,\sigma} v_K 
	= -\sum_{\substack{\sigma\in\E_{\rm int} \\ \sigma=K|L}}\F_{i,K,\sigma}
	\mathrm{D}_{K,\sigma}v, \quad
	\sum_{K\in\T}\sum_{\sigma\in\E_K}\tau_\sigma(\mathrm{D}_{K,\sigma}v)v_K
	= -|v|_{1,2,\T}^2.
\end{equation}
\end{remark}

\subsection{Main results}\label{sec.main}

We impose the following hypotheses.

\begin{itemize} 
\item[(H1)] Data: $\Omega\subset\R^d$ with $d\ge 1$ is a bounded polygonal 
($d=2$) or polyhedral ($d\ge 3$) domain, $T>0$, and $u^0\in L^2(\Omega;\R^n)$.
We set $\Omega_T=\Omega\times(0,T)$.
\item[(H2)] Discretization: $\mathcal{T}$ is an admissible discretization of 
$\Omega$ satisfying \eqref{2.regul} and $t_k=k\Delta t$ for $k=1,\ldots,N_T$.
\item[(H3)] Coefficients: Let $\gamma>0$, and
$A=(a_{ij})\in\R^{n\times n}$ is symmetric and positive definite.
Let $\lambda_m>0$ and $\lambda_M>0$ be the smallest and largest eigenvalue of $A$, respectively.
\end{itemize}

The positivity of $\gamma$ is not needed for the existence analysis but for
the convergence result, where we need higher-order integrability that
is deduced via the discrete Gagliardo--Nirenberg inequality from the
gradient bound. As mentioned in the introduction, the symmetry and
positive definiteness of $A$ can be replaced by the positivity of the
real parts of the eigenvalues of $A$ and the detailed-balance condition.

Recall the discrete BDF2 Rao entropy (see \eqref{2.h})
$$
  H(u,v) = \sum_{K\in\T}\m(K)h(u_K,v_K) 
	= \frac14\sum_{K\in\T}\m(K)\big(5|u_K|_A^2 - 4(u_K,v_K)_A + |v_K|_A^2\big)
$$
for $u,v\in V_\T$. If $u=v$, this expression reduces to the usual discrete Rao entropy, used
for the implicit Euler scheme, $H(u):=H(u,u)=\frac12\sum_{K\in\T}\m(K)|u_K|_A^2$.
Our first result is the existence of a discrete solution.

\begin{theorem}[Existence and uniqueness of discrete solutions]\label{thm.ex}
Let Hypotheses (H1)--(H3) hold, let $k\in\N$, and let $u^{k-1}\in V_\T^n$ be given. 
Then there exists a solution $u^k=(u_1^k,\ldots,u_n^k)
\in V_\T^n$ to scheme \eqref{2.init}--\eqref{2.mean} satisfying
the discrete entropy inequality
\begin{align}\label{2.ei.bdf}
  H(u^k,u^{k-1}) + \gamma\Delta t|A^{1/2}u^k|_{1,2,\T}^2 &\le H(u^{k-1},u^{k-2})
	\quad\mbox{for } k\ge 2, \\
  H(u^1) + \gamma\Delta t|A^{1/2}u^1|_{1,2,\T}^2 &\le H(u^0), \label{2.ei.euler}
\end{align}
and the scheme preserves the mass,
$\sum_{K\in\T}\m(K)u_{i,K}^k=\int_\Omega u_i^0(x)\dd x$ for $i=1,\ldots,n$, $k\ge 1$.
These results also hold if $\gamma=0$.
Furthermore, the solution is unique if $\gamma>0$, 
$\min_{\sigma\in\E_{\rm int}}\dist_\sigma\ge\xi\Delta x$ 
for some $\xi>0$, and
$$
  \frac{\Delta t}{(\Delta x)^{d+2}} 
  < \frac{C(d,\xi,\zeta)\gamma\lambda_m^2}{\lambda_M^2L^2 H(u^0)},
$$ 
where $\zeta$ is defined in \eqref{2.regul} and $L$ is the Lipschitz constant of the mean function $M$, defined in \eqref{2.mean}.
\end{theorem}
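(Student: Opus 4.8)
### Proof strategy for Theorem~\ref{thm.ex}

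The plan is to split the proof into three independent parts: existence via a topological degree (or Brouwer fixed-point) argument, the entropy inequality and mass conservation as consequences of the construction, and uniqueness via a contraction-type estimate exploiting the strict positive definiteness of the Rao form.

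\textbf{Existence.}
Fix $k\ge 2$ (the case $k=1$ is identical with the coefficients $1,-1$ replacing $\tfrac32,-2,\tfrac12$ and $h_0$ replacing $h$, or is covered by the standard implicit-Euler analysis). Since $V_\T^n$ is finite-dimensional, I would set up a fixed-point map: given $w\in V_\T^n$, define a linearized problem in which the mobility coefficient $(w_{i,\sigma})^+$ is frozen, and the diffusion term $\mathrm{D}_{K,\sigma}p_i$ is kept linear in the unknown. The linearized scheme is a square linear system whose bilinear form, after testing with $p_i(u^k)$ summed over $i$, controls $\gamma|A^{1/2}u^k|_{1,2,\T}^2$ from below (using the second identity in \eqref{2.dibp} and the nonnegativity of $\sum_\sigma\tau_\sigma(w_{i,\sigma})^+|\mathrm{D}_{K,\sigma}p_i(u^k)|^2$), while the zeroth-order part gives coercivity through $H(u^k,u^{k-1})$; hence the linear system is uniquely solvable and the map $w\mapsto u^k$ is well defined and continuous. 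A uniform a~priori bound on $\|u^k\|_{1,2,\T}$ follows from testing with $p_i(u^k)=(Au^k)_i$, applying the key inequality \eqref{1.gstab2} with the functional $h$ from \eqref{1.h} (this is exactly Lemma~\ref{lem.bdf2}), and using \eqref{2.hineq} to bound $H$ above and below by $|u|^2+|v|^2$; this produces an invariant ball, and Brouwer's theorem yields a fixed point, which is a solution of \eqref{2.sch2}.

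\textbf{Entropy inequality and mass conservation.}
Once a solution is in hand, I would test \eqref{2.sch2} with $\Delta t\,p_i(u^k_K)=\Delta t\,(Au^k)_{i,K}$, sum over $K\in\T$ and $i=1,\dots,n$. The discrete time-derivative term produces $\sum_{i,K}\m(K)\bigl(\tfrac32 u^k_{i,K}-2u^{k-1}_{i,K}+\tfrac12 u^{k-2}_{i,K}\bigr)(Au^k)_{i,K}$, which by Lemma~\ref{lem.bdf2} is bounded below by $H(u^k,u^{k-1})-H(u^{k-1},u^{k-2})$. The flux term, after the integration-by-parts formula \eqref{2.dibp}, splits into the diffusion contribution $\gamma\sum_{\sigma\in\E_{\rm int}}\tau_\sigma\,\mathrm{D}_{K,\sigma}u^k\cdot A\,\mathrm{D}_{K,\sigma}u^k=\gamma|A^{1/2}u^k|_{1,2,\T}^2$ and the mobility contribution $\sum_{\sigma,i}\tau_\sigma(u^k_{i,\sigma})^+|\mathrm{D}_{K,\sigma}p_i(u^k)|^2\ge 0$; dropping the latter gives \eqref{2.ei.bdf}. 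Inequality \eqref{2.ei.euler} is the same computation for the Euler step \eqref{2.sch1} using the convexity identity $(a-b)a\ge\tfrac12 a^2-\tfrac12 b^2$ in the weighted norm, i.e.\ $\sum_i(u^1_i-u^0_i)(Au^1)_i\ge H(u^1)-H(u^0)$. Mass conservation is immediate: testing \eqref{2.sch2} with the indicator-type function $v\equiv e_i$ (constant~$1$ in component~$i$), all flux differences $\mathrm{D}_{K,\sigma}$ telescope to zero over interior edges and vanish on the boundary, leaving $\sum_K\m(K)(\tfrac32 u^k_{i,K}-2u^{k-1}_{i,K}+\tfrac12 u^{k-2}_{i,K})=0$; an induction on $k$ starting from \eqref{2.init} (and from the Euler step at $k=1$) then gives $\sum_K\m(K)u^k_{i,K}=\int_\Omega u_i^0\,\dd x$.

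\textbf{Uniqueness.}
Suppose $u^k$ and $\widehat u^k$ are two solutions for the same data $u^{k-1},u^{k-2}$, and set $v=u^k-\widehat u^k$. Subtracting the two copies of \eqref{2.sch2} and testing with $\Delta t\,p_i(v_K)=\Delta t\,(Av)_{i,K}$, the discrete-time term contributes exactly $\tfrac32\sum_K\m(K)|v_K|_A^2=\tfrac32\|A^{1/2}v\|_{0,2,\T}^2$ (the lower-order shifts cancel since $u^{k-1},u^{k-2}$ are common), and the $\gamma$-diffusion term contributes $\gamma\Delta t\,|A^{1/2}v|_{1,2,\T}^2\ge 0$. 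The remaining mobility term is the difference of two cross-diffusion fluxes; writing it as $\sum_{\sigma\in\E_{\rm int}}\tau_\sigma\bigl[(u^k_{i,\sigma})^+\mathrm{D}_{K,\sigma}p_i(u^k)-(\widehat u^k_{i,\sigma})^+\mathrm{D}_{K,\sigma}p_i(\widehat u^k)\bigr]\mathrm{D}_{K,\sigma}(Av)_i$ and adding and subtracting $(\widehat u^k_{i,\sigma})^+\mathrm{D}_{K,\sigma}p_i(u^k)$, I would bound one piece by the nonnegative square $\tfrac12\sum_\sigma\tau_\sigma(\widehat u^k_{i,\sigma})^+|\mathrm{D}_{K,\sigma}(Av)_i|^2$ plus $\tfrac12\sum_\sigma\tau_\sigma(\widehat u^k_{i,\sigma})^+|\mathrm{D}_{K,\sigma}p_i(u^k)-\ldots|^2$ — actually more simply, I estimate the whole remainder in absolute value by $C\!\int_\Omega|v|\,|\na^\T v|\,(|\na^\T u^k|+\ldots)$-type terms, using the Lipschitz bound $|{(a)^+-(b)^+}|\le|a-b|\le L|u^k_K-u^k_L|$ on $M$ together with the a~priori $L^\infty$ bound on $u^k$ coming from the gradient bound \eqref{2.ei.bdf} and the inverse inequality $\|u^k\|_{0,\infty,\T}\le C(\Delta x)^{-d/2}\|u^k\|_{0,2,\T}$ (plus $\|\na^\T u^k\|_{0,\infty}\le C(\Delta x)^{-1}(\ldots)$, the source of the exponent $d+2$ and of the assumption $\dist_\sigma\ge\xi\Delta x$). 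After absorbing $\na^\T v$ into the $\gamma$-term and $v$ into the $\tfrac32$-term, one is left with
\[
  \Bigl(\tfrac32-C(d,\xi,\zeta)\,\Delta t\,(\Delta x)^{-(d+2)}\,\tfrac{\lambda_M^2 L^2}{\lambda_m^2}\,H(u^0)\Bigr)\|A^{1/2}v\|_{0,2,\T}^2\le 0,
\]
so under the stated smallness of $\Delta t/(\Delta x)^{d+2}$ the prefactor is positive and $v\equiv 0$. (For $k=1$ the same argument works with $\tfrac32$ replaced by $1$, only changing the constant.)

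\textbf{Main obstacle.}
The delicate point is the uniqueness estimate: the cross-diffusion term $u_{i,\sigma}^+\,\mathrm{D}_{K,\sigma}p_i(u)$ is quadratic in $u$, so its difference cannot be controlled by the dissipation alone without an $L^\infty$ (and $W^{1,\infty}$) bound on the solution, and the only bound available is the $H^1$-type entropy estimate \eqref{2.ei.bdf}. Converting it to an $L^\infty$ bound forces the discrete inverse inequality, and tracking the correct powers of $\Delta x$ (hence the unusual quotient $\Delta t/(\Delta x)^{d+2}$) and the correct dependence on $\lambda_m,\lambda_M,L$ and on $H(u^0)$ is where the care lies. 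Everything else is a routine assembly of Lemma~\ref{lem.bdf2}, the integration-by-parts formulas \eqref{2.dibp}, and the norm equivalences \eqref{2.hineq}.
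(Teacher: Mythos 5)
Your proposal follows the paper's strategy in all essentials: Lemma~\ref{lem.bdf2} applied to the test function $p_i(u^k)=(Au^k)_i$ gives the a priori bound and the entropy inequality \eqref{2.ei.bdf}, the fluxes telescope for mass conservation, and uniqueness comes from an energy (relative Rao entropy) estimate for the difference of two solutions combined with the inverse inequality $|A^{1/2}v^k|_{1,\infty,\T}\le C(\Delta x)^{-d/2-1}\|A^{1/2}v^k\|_{0,2,\T}$, which is exactly the source of the exponent $d+2$ and of the hypothesis $\dist_\sigma\ge\xi\Delta x$. The one genuine difference is the existence step: you freeze only the mobility $(w_{i,\sigma})^+$ and keep the diffusion and BDF2 terms linear in the unknown, so the linearized system is injective (testing the homogeneous system with $p_i(u)$ leaves the positive definite term $\tfrac{3}{2\Delta t}\|A^{1/2}u\|_{0,2,\T}^2$ plus nonnegative contributions), and plain Brouwer on an invariant ball suffices. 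The paper instead evaluates the whole nonlinear operator at $w$ on the right-hand side, adds an $\eps$-regularization to make the left-hand side coercive, runs a Brouwer-degree homotopy in $\rho$, and then passes to the limit $\eps\to0$; your variant avoids both the regularization and the limit and works equally well for $\gamma=0$, since the coercivity comes from the time-derivative term. Two small slips, neither of which affects the argument: (i) your final displayed uniqueness inequality should carry a factor $\gamma^{-1}$ in front of $\Delta t(\Delta x)^{-(d+2)}$ --- it appears when you absorb $|A^{1/2}v|_{1,2,\T}$ into the $\gamma$-dissipation by Young's inequality, and it is precisely why $\gamma$ sits in the numerator of the theorem's smallness condition and why uniqueness is claimed only for $\gamma>0$; (ii) the Lipschitz bound on the mobility should compare the two solutions at the same edge, $|u_{i,\sigma}^k-\widehat u_{i,\sigma}^k|\le L\bigl(|u_{i,K}^k-\widehat u_{i,K}^k|+|u_{i,L}^k-\widehat u_{i,L}^k|\bigr)$, not the two cell values of one solution.
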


The existence of a discrete solution is proved by a fixed-point argument using 
the Brouwer degree theorem.
Uniform estimates are obtained from the discrete Rao entropy inequality
\eqref{2.ei.bdf}, where the BDF2 time approximation is estimated according to \eqref{1.gstab2}.
This inequality, which is the key of our analysis, is proved in Lemma \ref{lem.bdf2}.
 
The uniqueness of solutions
is proved by using the relative entropy method, which
is equivalent to the energy method in the present case, since the Rao entropy is quadratic.
In other words, we use the test function $p_i(u^k)-p_i(v^k)$ in the difference
of the equations \eqref{2.sch2} satisfied by two discrete solutions $u^k$ and $v^k$. The cross-diffusion part contains cubic expressions, which turn into quadratic ones if $|A^{1/2}v^k|_{1,\infty,\T}$ is bounded (similar as in
\cite{ChJu19}). By an inverse inequality, this norm is bounded, 
up to some factor, 
by $(\Delta x)^{-d/2-1}\|A^{1/2}v^k\|_{0,2,\T}$, and $\|A^{1/2}v^k\|_{0,2,\T}$ is bounded because of \eqref{2.ei.bdf}--\eqref{2.ei.euler}. The remaining quadratic expression
is estimated by using the gradient bounds (which requires $\gamma>0$) and
the discrete $L^2(\Omega)$ bound coming from the time discretization
(and introducing the factor $\Delta t$). The condition 
$\min_{\sigma\in\E_{\rm int}}\dist_\sigma\le\xi\Delta x$ is discussed in 
Remark \ref{rem.xi}.

For the next result, 
we set $\bar{u}_i=\m(\Omega)^{-1}\int_\Omega u_i^0\dd x$ and recall
the discrete Poincar\'e--Wirtinger inequality 
$\|v-\bar{v}\|_{0,2,\T}\le C_P\zeta^{-1/2}|v|_{1,2,\T}$ for $v\in V_\T$
\cite[Theorem 3.6]{BCF15}. Then, in view of \eqref{2.normineq},
\begin{equation}\label{2.PWI}
  \|A^{1/2}(v-\bar{v})\|_{0,2,\T}\le C_P\bigg(\frac{\lambda_M}{\lambda_m\zeta}\bigg)^{1/2}
	|A^{1/2}v|_{1,2,\T} \quad\mbox{for }v\in V_\T.
\end{equation}

\begin{theorem}[Large-time behavior]\label{thm.large}
Let $u^k$ be a solution to scheme \eqref{2.init}--\eqref{2.mean}. Then, for $k\ge 2$,
$$
  \|A^{1/2}(u^k-\bar{u})\|_{0,2,\T} \le 
	\sqrt{2}\|A^{1/2}(u^0-\bar{u})\|_{0,2,\T}
    (1+\kappa\Delta t)^{-(k-2)/4},
$$
where $\kappa=4\gamma\lambda_m\zeta/((3+\sqrt{8})C_P^2\lambda_M)$
and $C_P>0$ is the constant of the Poincar\'e--Wirtinger inequality \eqref{2.PWI}. 
\end{theorem}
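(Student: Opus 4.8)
The plan is to run a discrete relative‑entropy (equivalently, since the Rao entropy is quadratic, energy) argument centred at the constant state. Mass conservation guarantees that $w^k:=u^k-\bar u$ has zero spatial average for every $k$, so the discrete Poincar\'e--Wirtinger inequality \eqref{2.PWI} applies to $w^k$. I would track the decay of the shifted entropy $E_k:=H(w^k,w^{k-1})$ and only at the end convert it into a bound for $\|A^{1/2}(u^k-\bar u)\|_{0,2,\T}=\|A^{1/2}w^k\|_{0,2,\T}$.

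First I would reproduce the derivation of \eqref{2.ei.bdf} with $w^k$ in place of $u^k$. Since $p_i$ is linear and $\frac32-2+\frac12=0$, testing \eqref{2.sch2} with $p_i(w^k)=p_i(u^k)-p_i(\bar u)$ changes nothing essential: by \eqref{1.gstab2} (Lemma~\ref{lem.bdf2}) the time increment is bounded below by $H(w^k,w^{k-1})-H(w^{k-1},w^{k-2})$, the discrete integration by parts \eqref{2.dibp} turns the flux term into $\gamma|A^{1/2}w^k|_{1,2,\T}^2$ plus the nonnegative truncated‑mobility contribution (using $\mathrm{D}_{K,\sigma}w_i^k=\mathrm{D}_{K,\sigma}u_i^k$), so that $E_k+\gamma\Delta t|A^{1/2}w^k|_{1,2,\T}^2\le E_{k-1}$ for $k\ge2$. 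I would also record two elementary consequences of the eigenvalue computation preceding \eqref{2.hineq}: the sharp lower bound $h(u,v)\ge\frac14|u|_A^2$, hence $E_k\ge\frac14\|A^{1/2}w^k\|_{0,2,\T}^2$, and the upper bound $h(u,v)\le\frac14(3+\sqrt8)(|u|_A^2+|v|_A^2)$.

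The core step is the two‑step recursion. Adding the entropy inequality at levels $k$ and $k+1$, estimating $|A^{1/2}w^j|_{1,2,\T}^2$ from below by \eqref{2.PWI}, and bounding $\|A^{1/2}w^{k+1}\|_{0,2,\T}^2+\|A^{1/2}w^k\|_{0,2,\T}^2$ below by $\frac{4}{3+\sqrt8}E_{k+1}$ via the upper $h$‑bound, the Poincar\'e constant and the eigenvalue ratio combine into exactly $\kappa$ and give $(1+\kappa\Delta t)E_{k+1}\le E_{k-1}$ for $k\ge2$. Iterating this on the rescaled energy $\Phi_k:=(1+\kappa\Delta t)^{(k-2)/2}E_k$, which satisfies $\Phi_k\le\Phi_{k-2}$ for $k\ge3$ and hence $\Phi_k\le\max\{\Phi_2,\Phi_3\}$, and using $E_2\le E_1$ together with $E_3\le(1+\kappa\Delta t)^{-1}E_1$ (so that both $\Phi_2$ and $\Phi_3$ are $\le E_1$), I get $E_k\le(1+\kappa\Delta t)^{-(k-2)/2}E_1$ for $k\ge2$. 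It remains to estimate $E_1$: testing the implicit‑Euler step \eqref{2.sch1} with $p_i(w^1)$, using $(a-b)^TAa=\frac12(|a|_A^2-|b|_A^2+|a-b|_A^2)$ and discarding the nonnegative mobility term gives $\|A^{1/2}w^1\|_{0,2,\T}^2+\|A^{1/2}(w^1-w^0)\|_{0,2,\T}^2\le\|A^{1/2}w^0\|_{0,2,\T}^2$; substituting this into the identity $H(w^1,w^0)=\frac14(3\|A^{1/2}w^1\|_{0,2,\T}^2-\|A^{1/2}w^0\|_{0,2,\T}^2+2\|A^{1/2}(w^1-w^0)\|_{0,2,\T}^2)$ yields $E_1\le\frac12\|A^{1/2}w^0\|_{0,2,\T}^2$. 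Then $\frac14\|A^{1/2}w^k\|_{0,2,\T}^2\le E_k\le(1+\kappa\Delta t)^{-(k-2)/2}E_1\le\frac12(1+\kappa\Delta t)^{-(k-2)/2}\|A^{1/2}w^0\|_{0,2,\T}^2$, and taking square roots gives the assertion with the constant $\sqrt2$.

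I expect the two‑step recursion and its iteration to be the main obstacle. In contrast to the implicit‑Euler scheme, the G‑stability functional $H(w^k,w^{k-1})$ genuinely couples two consecutive time levels, so one does not obtain a one‑step geometric contraction but only a control of $E_{k+1}$ by $E_{k-1}$; making this iterate cleanly forces one to deal with the parity of $k$ and to stitch the initial implicit‑Euler step carefully onto the BDF2 recursion. Moreover, keeping the constants sharp along the way — in particular using $h(u,v)\ge\frac14|u|_A^2$ and the Euler bound $E_1\le\frac12\|A^{1/2}w^0\|_{0,2,\T}^2$ rather than the cruder estimates in \eqref{2.hineq} — is precisely what is needed to land on the clean prefactor $\sqrt2$.
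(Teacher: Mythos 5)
Your proof is correct and follows essentially the same route as the paper: the relative Rao entropy $H(u^k-\bar u,u^{k-1}-\bar u)$, one extra iteration of the entropy inequality, the Poincar\'e--Wirtinger bound combined with the upper estimate in \eqref{2.hineq} to produce the factor $(1+\kappa\Delta t)^{-1}$, the parity-split resolution of the two-step recursion, and the bounds $E_k\ge\frac14\|A^{1/2}w^k\|_{0,2,\T}^2$ and $E_1\le\frac12\|A^{1/2}w^0\|_{0,2,\T}^2$ yielding the prefactor $\sqrt2$. The only substantive deviation is your treatment of $E_1$: you retain the term $\|A^{1/2}(w^1-w^0)\|_{0,2,\T}^2$ from the Euler energy identity and use $H(w^1,w^0)=\frac14\bigl(3\|A^{1/2}w^1\|_{0,2,\T}^2-\|A^{1/2}w^0\|_{0,2,\T}^2+2\|A^{1/2}(w^1-w^0)\|_{0,2,\T}^2\bigr)$, which is more careful than the paper's one-line identification $H(u^1-\bar u,u^0-\bar u)=H(u^1)-H(\bar u)$ (valid only when $u^1=u^0$) and reaches the same bound $E_1\le\frac12\|A^{1/2}w^0\|_{0,2,\T}^2$ rigorously.
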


The theorem states that $u^k$ converges exponentially fast to the constant steady state $\bar{u}$.
Indeed, setting $\lambda_{\Delta t}:=\log(1+\kappa\Delta t)/(\Delta t)
\nearrow\kappa$ as $\Delta t\to 0$, we have
$$
  \|A^{1/2}(u^k-\bar{u})\|_{0,2,\T} \le \sqrt{2}
  \|A^{1/2}(u^0-\bar{u})\|_{0,2,\T}\exp(-\lambda_{\Delta t}t_k), \quad k\ge 2.
$$
The proof of Theorem \ref{thm.large} is based on the discrete entropy inequality
for the discrete relative Rao entropy $H(u^k-\bar{u},u^{k-1}-\bar{u})$, 
similar to \eqref{2.ei.bdf}. 
Indeed, by the discrete Poincar\'e--Wirtinger inequality, the discrete gradient term
is bounded from below by the discrete $L^2(\Omega)$ norm of $u^k-\bar{u}$. As
$H(u^k-\bar{u},u^{k-1}-\bar{u})$ can be estimated in terms of the discrete $L^2(\Omega)$
norms of $u^k-\bar{u}$ and $u^{k-1}-\bar{u}$, we need to iterate the entropy inequality
a second time. Then, using \eqref{2.hineq}, we arrive at the inequality
$$
  H(u^k-\bar{u},u^{k-1}-\bar{u}) \le (1+\kappa\Delta t)^{-1}
	H(u^{k-2}-\bar{u},u^{k-3}-\bar{u}),
$$
and solving this recursion shows the result.

The numerial convergence of the scheme is proved in two steps. First, we show that the
fully discrete solution $u_m^k\in V_\T^n$, indexed with the space grid size $\Delta x_m\to 0$
as $m\to\infty$, converges, up to a subsequence, to a solution $u^k\in H^1(\Omega)$ to the semidiscrete system
\begin{equation*}
\begin{aligned}
  \frac{1}{\Delta t}(u_i^1-u_i^0) &= \diver(\gamma_i\na u_i^1 + u_i^1\na p_i(u^1)), \\
  \frac{1}{\Delta t}\bigg(\frac32 u_i^k - 2u_i^{k-1} + \frac12 u_i^{k-2}\bigg)
	&= \diver(\gamma_i\na u_i^k + (u_i^k)^+\na p_i(u^k))\quad\mbox{in }\Omega,
\end{aligned}
\end{equation*}
with no-flux boundary conditions $\na u_i^k\cdot\nu=0$ on $\pa\Omega$, $i=1,\ldots,n$.
Second, we prove that a subsequence of the sequence of semidiscrete solutions
converges to a weak solution to \eqref{1.eq}--\eqref{1.bic} as $\Delta t\to 0$. Both steps may be summarized as follows (the precise
convergence statements can be found in Propositions \ref{prop.space} and \ref{prop.time}).

\begin{theorem}[Convergence of the scheme]\label{thm.conv}
Let Hypotheses (H1)--(H3) hold and let $(\T_m)_{m\in\N}$ be a sequence of admissible
discretizations of $\Omega$ satisfying \eqref{2.regul} uniformly in $m$ and
$\Delta x_m\to 0$, $\Delta t_m\to 0$ as $m\to\infty$.
Then the solution $(u_m)$ to \eqref{2.init}--\eqref{2.mean}, constructed in Theorem
\ref{thm.ex}, converges, up to a subsequence, as $m\to\infty$ to a function $u=(u_1,\ldots,u_n)$ 
satisfying $u_i\ge 0$ in $\Omega_T$ for $i=1,\ldots,n$, 
$u_i\in L^2(0,T;H^1(\Omega))$,
$\pa_t u_i\in L^{2d+4}(0,T;W^{1,2d+4}(\Omega)')$, and $u$
is a weak solution to \eqref{1.eq}--\eqref{1.bic}.
\end{theorem}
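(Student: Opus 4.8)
The plan is to carry out the two limit passages announced before the statement, proving Propositions~\ref{prop.space} and \ref{prop.time} in turn: first let $\Delta x_m\to 0$ with $\Delta t$ fixed to reach a solution of the semidiscrete (in time) system, then let $\Delta t\to 0$ to reach a weak solution of \eqref{1.eq}--\eqref{1.bic}, and finally establish nonnegativity of the limit. For the spatial limit I would fix $\Delta t$ and, for each of the finitely many steps $k=1,\dots,N_T$, iterate the discrete entropy inequalities \eqref{2.ei.bdf}--\eqref{2.ei.euler} to bound $H(u_m^k,u_m^{k-1})+\gamma\Delta t\sum_{j\le k}|A^{1/2}u_m^j|_{1,2,\T_m}^2$ by a multiple of $H(u_m^0)$, which is bounded uniformly in $m$ since $u_m^0$ is the $L^2(\Omega)$-projection of $u^0$; with \eqref{2.normineq} this gives, for every $k$, a bound on $\|u_m^k\|_{1,2,\T_m}$ independent of $m$. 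A discrete Rellich--Kondrachov compactness argument for piecewise constant functions (see \cite{EGH00}) then produces a subsequence, common to all $k$ because $N_T$ is finite, with $u_m^k\to u^k$ strongly in $L^2(\Omega)^n$, $\na^{\T_m}u_m^k\rightharpoonup\na u^k$ weakly in $L^2(\Omega)^{n\times d}$, and $u^k\in H^1(\Omega)^n$.

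Next I would pass to the limit in the weak formulation of the scheme, obtained by multiplying \eqref{2.sch1}--\eqref{2.sch2} by a smooth test function sampled at the cell centers and using the discrete integration-by-parts formula \eqref{2.dibp}. The diffusion and discrete-time terms converge directly. For the cross-diffusion flux \eqref{2.flux} one uses that the edge mobilities $(u^k_{i,\sigma,m})^+$ converge a.e.\ to $(u^k_i)^+$ --- the strong $L^2$ convergence handles the cell values and property~(ii) of the mean $M$ controls $|u^k_{i,\sigma,m}-u^k_{i,K,m}|\le C|u^k_{i,K,m}-u^k_{i,L,m}|\to 0$ --- while the discrete differences $\mathrm{D}_{K,\sigma}p_i(u^k_m)$ reconstruct $\na p_i(u^k)$; continuity and linear growth of $z\mapsto z^+$ then close the passage. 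Passing to the limit in \eqref{2.ei.bdf}--\eqref{2.ei.euler} as well (weak lower semicontinuity of the gradient term, strong convergence of the $H$-terms) gives the entropy inequality for the semidiscrete solution $u^k$, hence uniform-in-$\Delta t$ bounds for the second step.

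For the temporal limit, let $u^{(\Delta t)}\in V_{\T,\Delta t}^n$ be the piecewise-constant-in-time interpolant of the semidiscrete solutions. Summing the semidiscrete entropy inequality over $k$ gives, uniformly in $\Delta t$, a bound for $\|u^{(\Delta t)}\|_{L^\infty(0,T;L^2(\Omega))}$ (from the lower bound in \eqref{2.hineq}) and for $\|\na u^{(\Delta t)}\|_{L^2(\Omega_T)}$ (here $\gamma>0$ is used); the discrete Gagliardo--Nirenberg inequality then upgrades this to a bound on $\|u^{(\Delta t)}\|_{L^p(\Omega_T)}$ for a suitable $p>2$, hence on the nonlinear flux $(u_i^{(\Delta t)})^+\na p_i(u^{(\Delta t)})$ in some $L^s(\Omega_T)$ with $s>1$. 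Writing the BDF2 difference $\tfrac32 u^k-2u^{k-1}+\tfrac12 u^{k-2}=\tfrac32(u^k-u^{k-1})-\tfrac12(u^{k-1}-u^{k-2})$ as a combination of first-order differences, the scheme furnishes the estimate for the discrete time derivative of $u_i^{(\Delta t)}$ in $L^{2d+4}(0,T;W^{1,2d+4}(\Omega)')$ claimed in the theorem. An Aubin--Lions--Simon-type compactness lemma for piecewise-constant-in-time functions, adapted to the two-step structure, then yields, up to a subsequence, strong convergence $u^{(\Delta t)}\to u$ in $L^2(\Omega_T)^n$ (and, by interpolation with the $L^p$ bound, in $L^q(\Omega_T)^n$ for all $q<p$), together with $\na u^{(\Delta t)}\rightharpoonup\na u$ in $L^2(\Omega_T)$ and $u\in L^2(0,T;H^1(\Omega))^n$. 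These convergences, with the equi-integrable $L^s$ bound on the fluxes, identify $(u_i^{(\Delta t)})^+\na p_i(u^{(\Delta t)})\rightharpoonup(u_i)^+\na p_i(u)$, so that $u$ is a weak solution of $\pa_t u_i=\diver(\gamma\na u_i+(u_i)^+\na p_i(u))$ with no-flux boundary conditions and initial datum $u^0$.

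Finally, to get $u_i\ge 0$ --- which turns $(u_i)^+$ into $u_i$ and shows that $u$ solves \eqref{1.eq}--\eqref{1.bic} --- I would test the limiting equation with $-(u_i)^-\in L^2(0,T;H^1(\Omega))$: the cross-diffusion contribution vanishes because $(u_i)^+\na p_i(u)\cdot\na(u_i)^-=0$ a.e., leaving $\tfrac{\dd}{\dd t}\|(u_i)^-\|_{L^2(\Omega)}^2=-2\gamma\|\na(u_i)^-\|_{L^2(\Omega)}^2\le 0$, so that $(u_i)^-\equiv 0$ since $(u_i^0)^-=0$. I expect the main obstacles to be, first, the compactness in time --- deriving the discrete BDF2 time-derivative bound and running the Aubin--Lions argument for the two-step interpolants, which is the point that genuinely departs from the implicit-Euler analysis (cf.\ \cite{JuZu22}) --- and, second, the justification of $-(u_i)^-$ as a test function in the nonnegativity step, which, given the weak time-regularity of $\pa_t u_i$, requires a Steklov-averaging (or mollification) argument rather than a direct use of the chain rule for $L^2(0,T;H^1)\cap H^1(0,T;H^{-1})$ functions.
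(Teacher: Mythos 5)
Your proposal follows essentially the same route as the paper: the two-stage limit (space first at fixed $\Delta t$, then time), the entropy inequalities \eqref{2.ei.bdf}--\eqref{2.ei.euler} as the source of all uniform bounds, discrete Rellich--Kondrachov compactness for the spatial limit together with a splitting of the flux into a ``discrete'' and an ``intermediate'' integral, the Gagliardo--Nirenberg upgrade to $L^{2+4/d}(\Omega_T)$, and an Aubin--Lions argument in the version of \cite{DrJu12} for the temporal limit; this is exactly the content of Propositions \ref{prop.space} and \ref{prop.time} and Lemma \ref{lem.hoi}. The one step you flag as an ``obstacle'' but do not carry out is precisely the only place where the two-step structure genuinely bites: the Dreher--J\"ungel lemma requires a bound on the first-order time translate $u^{(\tau)}-\pi_\tau u^{(\tau)}$, whereas the scheme only controls the BDF2 difference $\mathrm{D}_\tau u^{(\tau)}$, and your decomposition goes in the unhelpful direction (BDF2 difference expressed through first differences). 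The paper inverts it, $u^k-u^{k-1}=\tfrac23\mathrm{D}_\tau u^k+\tfrac13(u^{k-1}-u^{k-2})$, adds the separately proved bound on the first (Euler) translate, and absorbs the shifted translate norm into the left-hand side; this short absorption argument is necessary and should be made explicit. Two further remarks: the integrability exponent for $\pa_t u_i$ that actually comes out of this computation is the dual one, $r=(2d+4)/(2d+3)$, as in Proposition \ref{prop.time}, not $2d+4$ as in the theorem statement which your proposal inherits; and your Stampacchia-type argument for $u_i\ge 0$ (testing with $-(u_i)^-$, with Steklov averaging to justify the chain rule) is a sensible way to supply the nonnegativity claim, which the paper's written proof leaves implicit after obtaining the limit equation with $(u_i)^+$ in the flux.
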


The proof is based on suitable estimates uniform with respect to $\Delta x_m$ and $\Delta t_m$,
derived from the entropy inequality \eqref{2.ei.bdf}. For the limit $\Delta x_m\to 0$, we follow the strategy of \cite{CLP03}. 
The compactness argument is different, since we still keep the time discretization. 
The limit $\Delta t_m\to 0$ is based on a higher-order integrability
property derived from the Gagliardo--Nirenberg inequality and on the Aubin--Lions compactness lemma in the version of \cite{DrJu12}. 

We need the condition $\gamma>0$ since the application of the discrete Gagliardo--Nirenberg inequality requires 
discrete gradient bounds. However, the term involving $p_i(u)$ only provides 
a bound for the discrete kinetic energy 
$\sum_{\sigma\in\E_{\rm int}}\tau_\sigma
(u_{i,\sigma}^k)^+|\mathrm{D}_{K,\sigma}p_i(u^k)|^2$, from which we are unable
to conclude gradient bounds. For the Euler scheme, this issue can be overcome by using the Boltzmann entropy inequality, which provides bounds in 
$L^2(0,T;H^1(\Omega))$ and $L^\infty(0,T;L^1(\Omega))$ (see \eqref{1.Bei}),
and consequently in $L^{2+2/d}(\Omega_T)$, which is the required
higher-order integrability bound. As mentioned in
the introduction, this entropy is not compatible with the BDF2 
discretization. Therefore, the restriction $\gamma>0$ seems to
be unavoidable with our approach.

Finally, we verify that the convergence of the semidiscrete system is of second order. 

\begin{theorem}[Second-order convergence]\label{thm.second}
Let $u^k$ be a solution to \eqref{5.BDF2} and assume that the 
solution to \eqref{1.eq}--\eqref{1.bic} satisfies $u\in C^3([0,T];L^2(\Omega))\cap L^\infty(0,T;W^{1,\infty}(\Omega))$.
Furthermore, let $\eps>0$ be arbitrary and assume that
$$
  \Delta t < \frac{4(3-\sqrt{8})\gamma\lambda_m}{\lambda_M^3
	\|\na u\|_{L^\infty(\Omega_T)}^2+4\gamma\lambda_m\eps}.
$$
Then there exists $C(\eps)>0$, which is of order $\eps^{-1/2}$ as $\eps\to 0$ but independent of $\Delta t$, such that
$$
  \max_{k=1,\ldots,N_T}\|A^{1/2}(u_i^k-u_i(t_k))\|_{L^2(\Omega)}
	\le C(\eps)(\Delta t)^2\quad\mbox{for }i=1,\ldots,n.
$$
\end{theorem}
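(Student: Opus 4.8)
The plan is to run a standard consistency-plus-stability argument for the BDF2 scheme, but to carry the stability estimate \emph{in the weighted norm} $|\cdot|_A$ so that the G-stability functional $h(u,v)$ from \eqref{1.h} and Lemma~\ref{lem.bdf2} does the work. Let $e_i^k := u_i^k - u_i(t_k)$ denote the error, and let $\rho_i^k$ be the consistency defect obtained by inserting the exact solution into the semidiscrete scheme \eqref{5.BDF2}. First I would write, for $k\ge 2$,
\begin{equation*}
  \frac{1}{\Delta t}\Big(\tfrac32 e_i^k - 2e_i^{k-1} + \tfrac12 e_i^{k-2}\Big)
  = \diver\big(\gamma\na e_i^k\big) + R_i^k - \rho_i^k,
\end{equation*}
where $R_i^k$ collects the difference of the nonlinear cross-diffusion terms $(u_i^k)^+\na p_i(u^k) - u_i(t_k)\na p_i(u(t_k))$. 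A Taylor expansion with integral remainder gives $\|\rho_i^k\|_{L^2(\Omega)} \le C (\Delta t)^2 \|\pa_t^3 u_i\|_{L^\infty(0,T;L^2(\Omega))}$ using $u\in C^3([0,T];L^2(\Omega))$; the Euler first step produces $\|\rho_i^1\|_{L^2(\Omega)}\le C\Delta t\|\pa_t^2 u_i\|$, which after one multiplication by $\Delta t$ contributes only an $O((\Delta t)^2)$ term to the running sum and therefore does not spoil the final rate.

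The heart of the estimate is to test the error equation, summed over $i$, with $A e^k$ (the continuous analog of the test function $p_i(u^k)$ used throughout the paper), integrate over $\Omega$, and apply the continuous G-stability inequality — the integral form of \eqref{1.gstab2} — to bound the time-difference term from below by $\frac{1}{\Delta t}\big(\widehat H(e^k,e^{k-1}) - \widehat H(e^{k-1},e^{k-2})\big)$, where $\widehat H(u,v)=\int_\Omega h(u,v)\,\dd x$. The diffusion term gives $-\gamma\int_\Omega |A^{1/2}\na e^k|^2$, which I keep. The defect term is controlled by Cauchy--Schwarz: $\int_\Omega (Ae^k)\cdot\rho^k \le \frac{\lambda_M^{1/2}}{2}\big(\|A^{1/2}e^k\|_{L^2}^2 + \lambda_M\|\rho^k\|_{L^2}^2\big)$ or similar, absorbing the $e^k$-part into $\widehat H(e^k,e^{k-1})$ via \eqref{2.hineq}. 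The genuinely delicate term is $R_i^k$: expanding the nonlinearity around the exact solution, the "good" part is $-\int_\Omega u_i(t_k)\,|A^{1/2}\na e^k|^2$-type and combines with the $\gamma$-diffusion, while the remaining terms are linear in $\na e^k$ times $e^k$ times bounded data. Here the hypothesis $u\in L^\infty(0,T;W^{1,\infty}(\Omega))$ is exactly what lets me bound $\|\na u(t_k)\|_{L^\infty}$ and close the cross term $\int_\Omega (\text{bounded})\,\na e^k\cdot e^k$ by Young's inequality into $\eps\gamma\|A^{1/2}\na e^k\|_{L^2}^2 + C(\eps)\|A^{1/2}e^k\|_{L^2}^2$, leaving a strictly positive residual gradient coefficient precisely because of the smallness condition $\Delta t < 4(3-\sqrt 8)\gamma\lambda_m/(\lambda_M^3\|\na u\|_{L^\infty}^2 + 4\gamma\lambda_m\eps)$ — the $(3-\sqrt 8)$ factor and $\lambda_m,\lambda_M$ being the traces of $h$'s spectral bounds in \eqref{2.hineq} and of \eqref{2.normineq}.

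After these manipulations I arrive at an inequality of the form
\begin{equation*}
  \widehat H(e^k,e^{k-1}) + c_0\Delta t\,\|A^{1/2}\na e^k\|_{L^2}^2
  \le (1 + C(\eps)\Delta t)\,\widehat H(e^{k-1},e^{k-2}) + C\Delta t\,(\Delta t)^4
\end{equation*}
with $c_0>0$; summing from $k=2$ to $m$, using $\widehat H(e^1,e^0)\le C(\Delta t)^4$ (from the Euler-step error bound, which I would derive separately by the same test-function method applied to \eqref{2.sch1}'s continuous analog — that estimate is first-order in $\Delta t$ but squared it is $O((\Delta t)^2)$, and one extra factor of $\Delta t$ from the one step makes it $O((\Delta t)^4)$ inside $\widehat H$, or more simply one runs the Euler step at second-order accuracy in $\widehat H$ by a dedicated computation), and invoking the discrete Gronwall lemma gives $\widehat H(e^k,e^{k-1})\le C(\eps)(\Delta t)^4 e^{C(\eps)T}$ uniformly in $k$. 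Finally, \eqref{2.hineq} converts $\widehat H$ back to $\|A^{1/2}e^k\|_{L^2(\Omega)}^2$, and taking square roots yields the claimed $O((\Delta t)^2)$ bound, with the $\eps^{-1/2}$ dependence of $C(\eps)$ entering through the $C(\eps)$ in Young's inequality on the $R_i^k$ cross term. The main obstacle I anticipate is the careful bookkeeping of the nonlinear remainder $R_i^k$ — in particular verifying that the "bad" cubic/quadratic contributions genuinely assemble into a gradient term with a sign that survives the $\Delta t$-smallness condition rather than merely a Gronwall-type term — and ensuring the Euler initialization does not degrade the order, which is why the precise threshold on $\Delta t$ in the statement is sharp-looking and tied to $\|\na u\|_{L^\infty}^2$.
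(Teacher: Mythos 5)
Your proposal follows essentially the same route as the paper's proof: subtract the exact equation (Taylor-expanded to third order in time) from the semidiscrete scheme, test with $A(u(t_k)-u^k)$, apply the G-stability identity of Lemma~\ref{lem.bdf2} to the BDF2 difference, bound the consistency defect by $(\Delta t)^2\|\pa_t^3 u\|_{L^\infty(0,T;L^2(\Omega))}$, control the nonlinear cross term via Young's inequality using $\|\na u\|_{L^\infty(\Omega_T)}$ and the nonnegativity of $u$, and close with a Gronwall-type recursion initialized by the $O((\Delta t)^4)$ entropy of the single Euler step. One small correction to your bookkeeping: the stated $\Delta t$-threshold is not what keeps a residual gradient coefficient positive (the Young step can eliminate the gradient term entirely); it is needed because the remaining term $C_1\Delta t\,\|A^{1/2}(u(t_k)-u^k)\|_{L^2(\Omega)}^2$ sits at the implicit level $k$ and must be absorbed into $H(v^k,v^{k-1})$ on the \emph{left}-hand side via \eqref{2.hineq}, producing the factor $\bigl(1-4(C_1+\eps)\Delta t/(3-\sqrt{8})\bigr)^{-1}$ that your $(1+C(\eps)\Delta t)$ implicitly presupposes.
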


We allow for the parameter $\eps>0$ to minimize the time step size
constraint; however, optimizing this constraint gives large constants
$C(\eps)$.
The theorem is proved by analyzing the relative entropy $H(u(t_k)-u^k,u(t_{k-1})-u^{k-1})$, 
using a Taylor expansion for $u_i$ up to order $(\Delta t)^3$ (which requires a bound for $\pa_t^3 u_i$), and iterating the entropy inequality once more. The resulting recursive inequality for the relative entropy can be solved, leading to the desired second-order bound.


\section{Proof of Theorem \ref{thm.ex}}\label{sec.ex}

First, we make precise inequality \eqref{1.gstab2}. Recall definition \eqref{1.h} of $h(u,v)$
and let $H(u,v)=\sum_{K\in\T}\m(K)h(u,v)$ be the discrete Rao entropy.

\begin{lemma}[BDF2 inequality]\label{lem.bdf2}
It holds for $u,v,w\in\R^n$ that
$$
  \bigg(\frac32u-2v+\frac12w\bigg)^TAu
	= h(u,v) - h(v,w) + \frac14|u-2v+w|_A^2.
$$
In particular, for $u^k$, $u^{k-1}$, $u^{k-2}\in V_\T^n$,
$$
  \sum_{i,j=1}^n\sum_{K\in\T}\m(K)
	\bigg(\frac32 u_{i,K}^k - 2u_{i,K}^{k-1} 
    + \frac12 u_{i,K}^{k-2}\bigg)a_{ij} u_{j,K}^k
	\ge H(u^k,u^{k-1})-H(u^{k-1},u^{k-2}).
$$
\end{lemma}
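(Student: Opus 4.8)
The plan is to reduce everything to a single algebraic identity in $\R^n$: once we know
$$
  \Big(\tfrac32 u - 2v + \tfrac12 w\Big)^T A u = h(u,v) - h(v,w) + \tfrac14|u-2v+w|_A^2,
$$
the summed statement follows by evaluating it cellwise, multiplying by $\m(K)>0$, summing over $K\in\T$, and discarding the nonnegative remainder term (using that $A$ is positive definite). So the whole content sits in verifying this identity, which I would do by expanding both sides as quadratic forms in $u,v,w$ and comparing coefficients, using throughout that $A=A^T$, so that $(x,y)_A=(y,x)_A$ for all $x,y\in\R^n$.

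Concretely, by symmetry the left-hand side equals $\tfrac32|u|_A^2 - 2(u,v)_A + \tfrac12(u,w)_A$. Using the definition \eqref{1.h} (equivalently \eqref{2.h}) of $h$, one computes
$$
  h(u,v) - h(v,w) = \tfrac14\big(5|u|_A^2 - 4(u,v)_A - 4|v|_A^2 + 4(v,w)_A - |w|_A^2\big),
$$
and, expanding the square,
$$
  \tfrac14|u-2v+w|_A^2 = \tfrac14\big(|u|_A^2 + 4|v|_A^2 + |w|_A^2 - 4(u,v)_A + 2(u,w)_A - 4(v,w)_A\big).
$$
Adding these two lines, the $|v|_A^2$, $(v,w)_A$, and $|w|_A^2$ terms cancel, leaving $\tfrac14\big(6|u|_A^2 - 8(u,v)_A + 2(u,w)_A\big)$, which is exactly the left-hand side. (This is the matrix-weighted analogue of the scalar computation behind \eqref{1.gstab}: the algebra goes through verbatim because $A$ is symmetric, and the coefficients $5,-4,1$ in $h$ are precisely Dahlquist's G-stability weights for BDF2, tensored with $A$, which is what makes the $v$- and $w$-cross terms cancel.)

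For the second assertion I would apply the identity in each $K\in\T$ with $u=u_K^k$, $v=u_K^{k-1}$, $w=u_K^{k-2}$; the left-hand side then reads $\sum_{i,j=1}^n\big(\tfrac32 u_{i,K}^k - 2u_{i,K}^{k-1} + \tfrac12 u_{i,K}^{k-2}\big)a_{ij}u_{j,K}^k$. Multiplying by $\m(K)>0$, summing over $K$, and recalling $H(u,v)=\sum_{K\in\T}\m(K)h(u_K,v_K)$, the right-hand side becomes $H(u^k,u^{k-1}) - H(u^{k-1},u^{k-2}) + \tfrac14\sum_{K\in\T}\m(K)|u_K^k - 2u_K^{k-1} + u_K^{k-2}|_A^2$; since $A$ is positive definite by Hypothesis (H3), every summand in the last term is $\ge 0$, so dropping it yields the claimed inequality. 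I do not anticipate a genuine obstacle here — the statement is a bookkeeping identity together with positivity of $A$ — the only real point being the choice of the quadratic form $h$ that makes the cancellation happen.
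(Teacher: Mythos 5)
Your proposal is correct and is exactly the "direct computation" the paper invokes: the coefficient comparison for the identity checks out (all $v$-, $w$-, and cross terms not involving $u$ cancel), and the summed inequality follows by applying the identity cellwise and dropping the nonnegative term $\frac14\sum_{K\in\T}\m(K)|u_K^k-2u_K^{k-1}+u_K^{k-2}|_A^2$. Nothing to add.
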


\begin{proof}
The proof follows by a direct computation.
\end{proof}


\subsection{Definition and continuity of the fixed-point operator}

We assume that $k\ge 2$, since the existence of a solution $u^1\in V_\T^n$ to 
the Euler scheme \eqref{2.sch2} satisfying \eqref{2.ei.euler}
follows from \cite[Theorem 1]{JuZu20}.
Let $u^{k-1}\in V_\T^n$ be given and let $R>0$, $\delta>0$. We set
$$
  Z_R = \big\{w=(w_1,\ldots,w_n)\in V_\T^n: \|w_i\|_{1,2,\T}<R
	\mbox{ for }i=1,\ldots,n\big\},
$$
and let $w\in Z_R$. We consider the linear regularized problem
\begin{equation}\label{3.lin}
  \eps\bigg(\sum_{\sigma\in\E_K}\tau_\sigma\mathrm{D}_{K,\sigma} w_i^\eps
	- \m(K)w_{i,K}^\eps\bigg) = \frac{\m(K)}{\Delta t}
	\bigg(\frac32 w_{i,K}-2u_{i,K}^{k-1}+\frac12u_{i,K}^{k-2}\bigg)
	+ \sum_{\sigma\in\E_K}\F^+_{i,K,\sigma}(w)
\end{equation}
for $i=1,\ldots,n$, $K\in\T$, where
$$
  \F^+_{i,K,\sigma}(w) = -\tau_\sigma\big(\gamma\mathrm{D}_{K,\sigma}w_i
	+ w_{i,\sigma}^+\mathrm{D}_{K,\sigma} p_i(w)\big).
$$ 
The $\eps$-regularization guarantees the coercivity of the associated
bilinear form, while the truncation 
$w_{i,\sigma}^+$ is needed to obtain the nonnegativity
of the entropy dissipation (see the estimate of $I_6$ below).

We claim that \eqref{3.lin} has a unique solution $w^\eps\in V_\T^n$.
Indeed, since the mapping $g(w^\eps)=\eps(\sum_{\sigma\in\E_K}\tau_\sigma
\mathrm{D}_{K,\sigma} w_i^\eps - \m(K)w_{i,K}^\eps)$ is linear and acting on
finite-dimensional spaces, we only need to verify its injectivity. Let $w^\eps$ be in the kernel of this mapping. Multiplying $g(w^\eps)=0$ by $w_{i,K}^\eps$, summing over $K\in\T$, and using the discrete integration-by-parts formula \eqref{2.dibp}
gives
$$
  0 = \sum_{K\in\T}\sum_{\sigma\in\E_K}\tau_\sigma(\mathrm{D}_{K,\sigma}w_i^\eps)
	w_{i,K}^\eps - \sum_{K\in\T}\m(K)(w_{i,K}^\eps)^2 = -\|w_i^\eps\|_{1,2,\T}^2.
$$
This yields $w^\eps=0$ and proves the claim.

Next, we show that the fixed-point mapping $F:Z_R\to V_\T^n$, $F(w)=w^\eps$, is continuous.
For this, we multiply \eqref{3.lin} by $-w_{i,K}^\eps$, sum over $K\in\T$, and use
discrete integration by parts and the Cauchy--Schwarz inequality:
\begin{align}
  \eps\|w_i^\eps\|_{1,2,\T}^2 &= -\frac{1}{\Delta t}\sum_{K\in\T}\m(K)\bigg(\frac32 w_{i,K}
	- 2u_{i,K}^{k-1} + \frac12 u_{i,K}^{k-2}\bigg)w_{i,K}^\eps
	+ \sum_{\substack{\sigma\in\E_{\rm int} \\ \sigma=K|L}}\F_{i,K,\sigma}^+(w)
	\mathrm{D}_{K,\sigma}w_i^\eps \nonumber \\
	&\le \frac{1}{\Delta t}\bigg\|\frac32 w_i - 2u_i^{k-1} + \frac12 u_i^{k-2}\bigg\|_{0,2,\T}
	\|w_i^\eps\|_{0,2,\T} + \gamma|w_i|_{1,2,\T}|w_i^\eps|_{1,2,\T} 
    \label{3.auxlin} \\
	&\phantom{xx}{}- \sum_{\substack{\sigma\in\E_{\rm int} \\ \sigma=K|L}}
	\tau_\sigma(w_{i,\sigma})^+\mathrm{D}_{K,\sigma}
	p_i(w)\mathrm{D}_{K,\sigma}w_i^\eps. \nonumber
\end{align}
For the last term, we use the Cauchy--Schwarz inequality and the fact that
any norm on $V_\T^n$ is equivalent:
\begin{align*}
  -\sum_{\substack{\sigma\in\E_{\rm int} \\ \sigma=K|L}} &
	\tau_\sigma(w_{i,\sigma})^+\mathrm{D}_{K,\sigma}p_i(w)\mathrm{D}_{K,\sigma}w_i^\eps
	= -\sum_{j=1}^n\sum_{\substack{\sigma\in\E_{\rm int} \\ \sigma=K|L}}\tau_\sigma 
	a_{ij}w_{i,\sigma}^+\mathrm{D}_{K,\sigma}w_i\mathrm{D}_{K,\sigma}w_i^\eps \\
	&\le \sum_{j=1}^n\bigg(\sum_{\substack{\sigma\in\E_{\rm int} \\ \sigma=K|L}}
	\tau_\sigma|\mathrm{D}_\sigma w_i^\eps|^2
	\bigg)^{1/2}\bigg(\sum_{\substack{\sigma\in\E_{\rm int} \\ \sigma=K|L}}\tau_\sigma 
	a_{ij}^2(w_{i,\sigma}^+)^2|\mathrm{D}_{\sigma}w_j|^2\bigg)^{1/2} \\
	&\le C(A)\|w\|_{0,\infty,\T}\sum_{j=1}^n|w_i^\eps|_{1,2,\T}|w_j|_{1,2,\T} 
	\le C(A,R)\|w_i^\eps\|_{1,2,\T},
\end{align*}
where we took into account the linear growth of $w_{i,\sigma}^+$ with respect to $w_{i,K}$ and $w_{i,L}$ (see \eqref{2.mean}) and the definition of $Z_R$.
Inserting these estimates into \eqref{3.auxlin} and dividing by $\|w_i^\eps\|_{1,2,\T}$,
it follows that $\eps\|w_i^\eps\|_{1,2,\T}\le C(A,R)$. 

This bound allows us to verify the continuity of $F$. 
Indeed, let $w^\ell\to w$ as $\ell\to\infty$ and
set $w^{\eps,\ell}=F(w^\ell)$. Then $(w^{\eps,\ell})_{\ell\in\N}$ is uniformly
bounded in the discrete $H^1(\Omega)$ norm. Therefore, there exists a subsequence,
which is not relabeled, such that $w^{\eps,\ell}\to w^\eps$ as $\ell\to\infty$.
Passing to the limit $\ell\to\infty$ in scheme \eqref{3.lin}, we see that
$w^\eps$ is a solution of the scheme and consequently $w^\eps=F(w)$. Since the solution to the
linear scheme \eqref{3.lin} is unique, the entire sequence $(w^{\eps,\ell})_{\ell\in\N}$
converges to $w^\eps$, which shows the continuity of $F$. 

\subsection{Existence of a fixed point}

According to the Brouwer degree fixed-point theorem, it is sufficient to show that
for all $(w^\eps,\rho)\in\overline{Z}_R\times[0,1]$ such that $w^\eps=\rho F(w^\eps)$,
it holds that $w^\eps\not\in\pa Z_R$ or, equivalently, $\|w^\eps\|_{1,2,\T}<R$. 
We claim that this is true for sufficiently large $R>0$. Indeed, let $w^\eps$ be
such a fixed point. It satisfies
\begin{align*}
  \eps\bigg(\sum_{\sigma\in\E_K}&\tau_\sigma\mathrm{D}_{K,\sigma} w_i^\eps - \m(K)w_{i,K}^\eps\bigg) \\
  &= \frac{\rho}{\Delta t}\m(K)\bigg(\frac32 w_{i,K}^\eps - 2u_{i,K}^{k-1}
	+ \frac12 u_{i,K}^{k-2}\bigg) + \rho\sum_{\sigma\in\E_K}\F_{i,K,\sigma}^+(w^\eps).
\end{align*}
We multiply this equation by $-(\Delta t) p_i(w^\eps)$ and sum over $i=1,\ldots,n$, $K\in\T$. Then $0=I_1+I_2+I_3$, where
\begin{align*}
  I_1 &= -\eps\Delta t\sum_{i,j=1}^n\sum_{K\in\T}
  \bigg(\sum_{\sigma\in\E_K}\tau_\sigma
  \mathrm{D}_{K,\sigma} w_i^\eps 
  - \m(K)w_{i,K}^\eps\bigg)a_{ij}w_{j,K}^\eps, \\
  I_2 &= \rho\sum_{i,j=1}^n\sum_{K\in\T}
	a_{ij}\bigg(\frac32 w_{i,K}^\eps - 2u_{i,K}^{k-1} + \frac12 u_{i,K}^{k-2}\bigg)w_{j,K}^\eps, \\
  I_3 &= -\rho\Delta t\sum_{i=1}^n\sum_{\substack{\sigma\in\E_{\rm int} \\ \sigma=K|L}}\F_{i,K,\sigma}^+(w^\eps)\mathrm{D}_{K,\sigma}p_i(w^\eps).
\end{align*}
By discrete integration by parts,
\begin{align*}
  I_1 &= \eps\Delta t\sum_{i,j=1}^n\bigg(\sum_{\substack{\sigma\in\E_{\rm int} \\ \sigma=K|L}}\tau_\sigma a_{ij}\mathrm{D}_{K,\sigma}w_i^\eps
  \mathrm{D}_{K,\sigma}w_j^\eps + \sum_{K\in\T}\m(K)a_{ij} w_{i,K}^\eps
  w_{j,K}^\eps\bigg) \\
  &\ge \eps\lambda_m\Delta t(|w^\eps|_{1,2,\T}^2+\|w^\eps\|_{0,2,\T}^2)
  = \eps\lambda_m\Delta t\|w^\eps\|_{1,2,\T}^2,
\end{align*}
and by Lemma \ref{lem.bdf2},
$$
  I_2 \ge H(w^\eps,u^{k-1}) - H(u^{k-1},u^{k-2}).
$$
For the third term, we obtain
\begin{align*}
  I_3 &= \rho\Delta t\sum_{i,j=1}^n\sum_{\substack{\sigma\in\E_{\rm int} \\ \sigma=K|L}}\tau_\sigma\gamma
	a_{ij}\mathrm{D}_{K,\sigma}w_i^\eps\mathrm{D}_{K,\sigma}w_j^\eps
	+ \rho\Delta t\sum_{i=1}^n\sum_{\substack{\sigma\in\E_{\rm int} \\ \sigma=K|L}}
	\tau_\sigma(w_{i,\sigma}^\eps)^+
	\bigg(\sum_{j=1}^n a_{ij}\mathrm{D}_{K,\sigma}w_j^\eps\bigg)^2 \\
	&\ge \gamma\rho\Delta t
	\sum_{\substack{\sigma\in\E_{\rm int} \\ \sigma=K|L}}\tau_\sigma
	|A^{1/2}\mathrm{D}_{K,\sigma}w^\eps|^2 
	= \gamma\rho\Delta t|A^{1/2}w^\eps|_{1,2,\T}^2.
\end{align*}
Collecting these estimates gives
\begin{equation}\label{3.rho}
  \eps\Delta t\|w^\eps\|_{1,2,\T}^2 + H(w^\eps,u^{k-1}) 
	+ \gamma\Delta t\rho|A^{1/2}w^\eps|_{1,2,\T}^2 \le H(u^{k-1},u^{k-2}).
\end{equation}
Setting $R=(\eps\Delta t)^{-1/2}H(u^{k-1},u^{k-2})^{1/2}+1$,
we infer that $\|w^\eps\|_{1,2,\T}^2\le (R-1)^2<R^2$ and thus $w^\eps\not\in\pa Z_R$,
which shows the claim. Hence, there exists a fixed point $w^\eps$ to $F$, 
which is a solution to
\begin{equation}\label{3.nonlin}
  \eps\bigg(\sum_{\sigma\in\E_K}\tau_\sigma\mathrm{D}_{K,\sigma} w_i^\eps
	- \m(K)w_{i,K}^\eps\bigg) = \frac{\m(K)}{\Delta t}
	\bigg(\frac32 w_{i,K}^\eps-2u_{i,K}+\frac12u_{i,K}^{k-2}\bigg)
	+ \sum_{\sigma\in\E_K}\F_{i,K,\sigma}(w^\eps).
\end{equation}

\subsection{Limit $\eps\to 0$}

The solution $w^\eps$ to \eqref{3.nonlin} satisfies the regularized entropy inequality \eqref{3.rho} with $\rho=1$, and the right-hand side is independent of $\eps$ and $M$. It follows from the
Bolzano--Weierstra{\ss} theorem that there exists a
subsequence of $w^\eps$, which is not relabeled, such that $w^\eps\to w$
as $\eps\to 0$. In particular, $\eps^{1/2} w^\eps\to 0$.
Since the problem is finite dimensional, we can pass to the limit $\eps\to 0$
in \eqref{3.nonlin}. Consequently, $u^k:=w$ is a solution to \eqref{2.sch2}--\eqref{2.mean}.
The same limit in \eqref{3.rho} with $\rho=1$ leads to the discrete entropy inequality
of Theorem \ref{thm.ex}, which finishes the proof.


\subsection{Uniqueness of solutions}

Let $u^k,v^k\in V_\T^n$ be two solutions to \eqref{2.init}--\eqref{2.mean}
with the same initial data $u^0=v^0$.
We take the difference of the equations satisfied by $u^k$ and $v^k$, multiply the resulting equation by
$p_i(u_K^k)-p_i(v_K^k)=\sum_{j=1}^n a_{ij}(u_{j,K}^k-v_{j,K}^k)$, 
sum over $i=1,\ldots,n$, $K\in\T$,
and use discrete integration by parts. This leads to
$0=I_4+I_5+I_6$, where
\begin{align*}
  I_4 &= \frac{3}{2\Delta t}\sum_{i,j=1}^n\sum_{K\in\T}\m(K)
    a_{ij}(u_{i,K}^k-v_{i,K}^k)
	(u_{j,K}^k-v_{j,K}^k) \\
  I_5 &= \sum_{i,j=1}^n
	\sum_{\substack{\sigma\in\E_{\rm int} \\ \sigma=K|L}}\tau_\sigma
	\gamma a_{ij}\mathrm{D}_{K,\sigma}(u_i^k-v_i^k)\mathrm{D}_{K,\sigma}
    (u_j^k-v_j^k) \\
  I_6 &= \sum_{i,j,\ell=1}^n
	\sum_{\substack{\sigma\in\E_{\rm int} \\ \sigma=K|L}}\tau_\sigma
	a_{ij}\big((u_{i,\sigma}^k)^+\mathrm{D}_{K,\sigma}u_j^k 
	- (v_{i,\sigma}^k)^+\mathrm{D}_{K,\sigma}v_j^k\big)
	a_{i\ell}\mathrm{D}_{K,\sigma}(u_\ell^k-v_\ell^k).
\end{align*}
By the definition of the weighted norm,
$I_4 = (3/(2\Delta t))\|A^{1/2}(u^k-v^k)\|_{0,2,\T}^2$.
Furthermore,
\begin{align*}
  I_5 \ge \gamma\sum_{\substack{\sigma\in\E_{\rm int} \\ \sigma=K|L}}
  \tau_\sigma|(A^{1/2}\mathrm{D}_{K,\sigma}(u^k-v^k))|^2
  = \gamma|A^{1/2}(u^k-v^k)|_{1,2,\T}^2.
\end{align*}
We add and subtract the term $(u_{i,\sigma}^k)^+\mathrm{D}_{K,\sigma}v_j^k$ in $I_6$ and apply the Cauchy--Schwarz inequality:
\begin{align*}
  I_6 &= \sum_{i,j,\ell=1}^n
	\sum_{\substack{\sigma\in\E_{\rm int} \\ \sigma=K|L}}\tau_\sigma
	 a_{ij}a_{i\ell}\Big((u_{i,\sigma}^k)^+\mathrm{D}_{K,\sigma}(u_j^k-v_j^k) \\
	&\phantom{xx}{}+ \big((u_{i,\sigma}^k)^+ - (v_{i,\sigma}^k)^+\big)
	\mathrm{D}_{K,\sigma}v_j^k\Big)\mathrm{D}_{K,\sigma}(u_\ell^k-v_\ell^k) \\
	&= \sum_{i=1}^n\sum_{\substack{\sigma\in\E_{\rm int} \\ \sigma=K|L}}\tau_\sigma
	(u_{i,\sigma}^k)^+\bigg(\sum_{j=1}^n a_{ij}\mathrm{D}_{K,\sigma}(u_j^k-v_j^k)\bigg)
	\bigg(\sum_{\ell=1}^n a_{i\ell}\mathrm{D}_{K,\sigma}(u_\ell^k-v_\ell^k)\bigg) \\
	&\phantom{xx}{}- \sum_{\substack{\sigma\in\E_{\rm int} \\ \sigma=K|L}}\tau_\sigma
	(A\mathrm{D}_{K,\sigma}v^k)^T\big[\operatorname{diag}\big((u_{i,\sigma}^k)^+
	- (v_{i,\sigma}^k)^+\big)A^{1/2}\big](A^{1/2}\mathrm{D}_{K,\sigma}(u^k-v^k)) \\
	&\ge -\bigg(\sum_{\substack{\sigma\in\E_{\rm int} \\ \sigma=K|L}}\tau_\sigma
	|A\mathrm{D}_{K,\sigma}v^k|^2\big|\operatorname{diag}\big((u_{i,\sigma}^k)^+
	- (v_{i,\sigma}^k)^+\big)A^{1/2}\big|^2\bigg)^{1/2} \\
	&\phantom{xx}{}\times\bigg(\sum_{\substack{\sigma\in\E_{\rm int} \\ \sigma=K|L}}\tau_\sigma
	|A^{1/2}\mathrm{D}_{K,\sigma}(u^k-v^k)|^2\bigg)^{1/2},
\end{align*}
where $\operatorname{diag}((u_{i,\sigma}^k)^+ - (v_{i,\sigma}^k)^+)$ denotes the diagonal matrix with the entries $(u_{i,\sigma}^k)^+ - (v_{i,\sigma}^k)^+$ for $i=1,\ldots,n$.
Together with
\begin{align*}
  & |A\mathrm{D}_{K,\sigma}v^k| \le |A^{1/2}||A^{1/2}\mathrm{D}_{K,\sigma}v^k|
	\le \lambda_M^{1/2}|\mathrm{D}_{K,\sigma}v^k|_A\quad\mbox{and} \\
  & \big|\operatorname{diag}\big((u_{i,\sigma}^k)^+	- (v_{i,\sigma}^k)^+\big)A^{1/2}\big|
	\le \big|\operatorname{diag}\big((u_{i,\sigma}^k)^+	- (v_{i,\sigma}^k)^+\big)\big||A^{1/2}| \\ 
	&\phantom{xx}\le \lambda_M^{1/2}\max_{i=1,\ldots,n}|(u_{i,\sigma}^k)^+ - (v_{i,\sigma}^k)^+|
	\le \lambda_M^{1/2}\max_{i=1,\ldots,n}|u_{i,\sigma}^k - v_{i,\sigma}^k|,
\end{align*}
we find that
$$
  I_6 \ge -\lambda_M|A^{1/2}v^k|_{1,\infty,\T}\max_{i=1,\ldots,n}\bigg(
	\sum_{\substack{\sigma\in\E_{\rm int} \\ \sigma=K|L}}\m(\sigma)\dist_\sigma
	|u_{i,\sigma}^k-v_{i,\sigma}^k|^2\bigg)^{1/2}|A^{1/2}(u^k-v^k)|_{1,2,\T}.
$$

It remains to estimate the term involving the difference $|u_{i,\sigma}^k-v_{i,\sigma}^k|$.
By the Lipschitz continuity of the mean function $M(u_{i,K}^k,u_{i,L}^k)
= u_{i,\sigma}^k$ with Lipschitz constant $L>0$ and the mesh regularity \eqref{2.sum},
\begin{align*}
  \sum_{\substack{\sigma\in\E_{\rm int} \\ \sigma=K|L}} & \m(\sigma)\dist_\sigma
	|u_{i,\sigma}^k-v_{i,\sigma}^k|^2
	= \frac12\sum_{K\in\T}\sum_{\sigma\in\E_{{\rm int},K}}\m(\sigma)\dist_\sigma
	|u_{i,\sigma}^k-v_{i,\sigma}^k|^2 \\
	&\le \frac{L^2}{2}\sum_{K\in\T}\sum_{\substack{\sigma\in\E_{\rm int} \\ \sigma=K|L}}\m(\sigma)\dist_\sigma
	\big(|u_{i,K}^k-v_{i,K}^k|+|u_{i,L}^k-v_{i,L}^k|\big)^2 \\
	&\le 2L^2\sum_{K\in\T}\sum_{\substack{\sigma\in\E_{\rm int} \\ \sigma=K|L}}\m(\sigma)\dist_\sigma|u_{i,K}^k-v_{i,K}^k|^2
	\le \frac{2dL^2}{\zeta}\sum_{K\in\T}\m(K)|u_{i,K}^k-v_{i,K}^k|^2 \\
	&= \frac{2dL^2}{\zeta}\|u^k-v^k\|_{0,2,\T}^2
	\le \frac{2dL^2}{\lambda_m\zeta}\|A^{1/2}(u^k-v^k)\|_{0,2,\T}^2,
\end{align*}
This shows that
$$
  I_6 \ge -\frac{\lambda_M L}{\lambda_m^{1/2}}
    \bigg(\frac{2d}{\zeta}\bigg)^{1/2} |A^{1/2}v^k|_{1,\infty,\T}
	\|A^{1/2}(u^k-v^k)\|_{0,2,\T}|A^{1/2}(u^k-v^k)|_{1,2,\T}.
$$
Collecting the estimates for $I_4$, $I_5$, and $I_6$  
and using Young's inequality gives
\begin{align}\label{3.aux}
  \frac{3}{2\Delta t}&\|A^{1/2}(u^k-v^k)\|_{0,2,\T}^2
	+ \gamma|A^{1/2}(u^k-v^k)|_{1,2,\T}^2 \\
	&\le \frac{\lambda_ML}{\lambda_m^{1/2}}
	\bigg(\frac{2d}{\zeta}\bigg)^{1/2}|A^{1/2}v^k|_{1,\infty,\T}
	\|A^{1/2}(u^k-v^k)\|_{0,2,\T}|A^{1/2}(u^k-v^k)|_{1,2,\T} \nonumber \\
    &\le \frac{3}{2\Delta t}\|A^{1/2}(u^k-v^k)\|_{0,2,\T}^2
    + \frac{\Delta t}{3}\frac{d\lambda_M^2L^2}{\lambda_m\zeta}
    |A^{1/2}v^k|_{1,\infty,\T}^2|A^{1/2}(u^k-v^k)|_{1,2,\T}^2. \nonumber
\end{align}
Now, the inverse inequality $|A^{1/2}v^k|_{1,\infty,\T}\le C'(d)
(\Delta x)^{-d/2}\zeta^{-1/2} |A^{1/2}v^k|_{1,2,\T}$ 
\cite[Prop. 3.10]{DFGH03} and condition $\dist_\sigma\ge\xi\Delta x$
imply that
\begin{align*}
  |A^{1/2}v^k|_{1,\infty,\T}^2
  &\le \frac{C'(d)^2}{(\Delta x)^d\zeta}
  \sum_{\substack{\sigma\in\E_{\rm int} \\ \sigma=K|L}}
  \frac{\m(\sigma)}{\dist_\sigma}
  |\mathrm{D}_{K,\sigma}(A^{1/2}v^k)|^2 \\
  &\le \frac{C'(d)^2}{(\Delta x)^d\zeta}
  \sum_{\substack{\sigma\in\E_{\rm int} \\ \sigma=K|L}}
  \frac{\m(\sigma)\dist_\sigma}{\xi^2(\Delta x)^2}
  |\mathrm{D}_{K,\sigma}(A^{1/2}v^k)|^2.
\end{align*}
It follows from \eqref{2.sum} and 
$|\mathrm{D}_{K,\sigma}(A^{1/2}v^k)|^2\le 2(|v_K^k|_A^2+|v_L^k|_A^2)$ that
\begin{align*}
  |A^{1/2}v^k|_{1,\infty,\T}^2
  &\le \frac{C'(d)^2}{(\Delta x)^{d+2}\xi^2\zeta}
  \sum_{K\in\T}\frac{d}{\zeta}\m(K)|\mathrm{D}_{K,\sigma}(A^{1/2}v^k)|^2 \\
  &\le \frac{2dC'(d)^2}{(\Delta x)^{d+2}(\xi\zeta)^2}\sum_{K\in\T}\m(K)
  |A^{1/2}v^k_K|^2 = \frac{C(d,\xi,\zeta)}{(\Delta x)^{d+2}}
  \|A^{1/2}v^k\|_{0,2,\T}^2.
\end{align*}
Using this inequality as well as the bound
$$
  (3-\sqrt{8})\|A^{1/2}v^k\|_{0,2,\T}^2
  \le 4H(v^1,v^0)\le 2(3+\sqrt{8})(H(v^1)+H(v^0))
  \le 4(3+\sqrt{8})H(u^0),
$$
obtained from \eqref{2.ei.bdf}--\eqref{2.ei.euler}, we deduce from 
\eqref{3.aux}, for another constant $C(d,\xi,\zeta)$ that
$$
  \gamma|A^{1/2}(u^k-v^k)|_{1,2,\T}^2
  \le C(d,\xi,\zeta)\frac{\lambda_M^{2}L^2}{\lambda_m}
  \frac{\Delta t}{(\Delta x)^{d+2}}H(u^0)|A^{1/2}(u^k-v^k)|_{1,2,\T}^2.
$$
Then our smallness condition on $\Delta t/(\Delta x)^{d+2}$ implies that
$|A^{1/2}(u^k-v^k)|_{1,2,\T}=0$ and consequently, $u^k=v^k$,
finishing the proof.

\begin{remark}\rm\label{rem.xi}
The quasi-uniform condition $\min_{\sigma\in\E_{\rm int}}\dist_\sigma
\ge\xi\Delta x>0$ implies condition (23) in \cite{EGH99},
since the mesh regularity \eqref{2.regul} gives
$\min_{K\in\T}\min_{\sigma\in\E_K}\dist(x_K,\sigma)/
\operatorname{diam}(K)\ge \zeta\dist_\sigma/\Delta x\ge \zeta\xi>0$.
It also implies the mesh regularity condition
$\operatorname{diam}(K)/\dist(x_K,\sigma)\le\xi_0$ in \cite[(9)]{EGH99},
since, because of \eqref{2.regul} again,
$\operatorname{diam}(K)/\dist(x_K,\sigma)\le \Delta x/(\zeta\dist_\sigma)
\le 1/(\xi\zeta)=:\xi_0$. It can be seen by considering quadratic cells that the quasi-uniform condition $\min_{\sigma\in\E_{\rm int}}\dist_\sigma\ge\xi\Delta x>0$ generally does not imply
the mesh regularity condition \eqref{2.regul} and vice versa, so
both conditions are independent from each other.
\end{remark}


\section{Proof of Theorem \ref{thm.large}}\label{sec.large} 

We infer from mass conservation, 
$\sum_{K\in\T}\m(K)u_{i,K}^k=\sum_{K\in\T}\m(K)u_{i,K}^0=\m(\Omega)\bar{u}_i$, that
\begin{align*}
  H(u^k-\bar{u},u^{k-1}-\bar{u})
	&= H(u^k,u^{k-1}) + \frac12\sum_{K\in\T}\m(K)\big(|\bar{u}|_A^2 - 3(u_K^k,\bar{u})_A + (u_K^{k-1},\bar{u})_A\big) \\
	&= H(u^k,u^{k-1}) - \frac12\m(\Omega)|\bar{u}|_A^2.
\end{align*}
Then the entropy inequality \eqref{2.ei.bdf} shows that
\begin{equation}\label{5.relH}
  H(u^k-\bar{u},u^{k-1}-\bar{u}) + \gamma\Delta t|A^{1/2}u^k|_{1,2,\T}^2
	\le H(u^{k-1}-\bar{u},u^{k-2}-\bar{u})
\end{equation}
for $k\ge 2$. Another iteration gives, for $k\ge 3$,
$$
  H(u^k-\bar{u},u^{k-1}-\bar{u}) + \gamma\Delta t
	\big(|A^{1/2}u^k|_{1,2,\T}^2 + |A^{1/2}u^{k-1}|_{1,2,\T}^2\big) \le H(u^{k-2}-\bar{u},u^{k-3}-\bar{u}).
$$
Hence, taking into account the discrete Poincar\'e--Wirtinger inequality \eqref{2.PWI},
\begin{align*}
  H(u^k-\bar{u},u^{k-1}-\bar{u}) 
    &+ \frac{\gamma\lambda_m\zeta}{C_P^2\lambda_M} 
    \Delta t\big(\|A^{1/2}(u^k-\bar{u})\|_{0,2,\T}^2 + \|A^{1/2}(u^{k-1}-\bar{u})\|_{0,2,\T}^2\big) \\
	&\le H(u^{k-2}-\bar{u},u^{k-3}-\bar{u}),
\end{align*}
and the norm equivalence \eqref{2.normineq},
$$
  H(u^k-\bar{u},u^{k-1}-\bar{u}) 
	+ \frac{4\gamma\lambda_m\zeta\Delta t}{(3+\sqrt{8})C_P^2\lambda_M}
	H(u^k-\bar{u},u^{k-1}-\bar{u}) \le H(u^{k-2}-\bar{u},u^{k-3}-\bar{u}).
$$
This can be written as
$$
  H(u^k-\bar{u},u^{k-1}-\bar{u}) \le (1+\kappa\Delta t)^{-1}H(u^{k-2}-\bar{u},u^{k-3}-\bar{u}),
$$
where $\kappa=4\gamma\lambda_m\zeta/((3+\sqrt{8})C_P^2\lambda_M)$.
Depending on whether $k$ is odd or even, we resolve this iteration as follows:
\begin{align*}
  H(u^{2\ell+1}-\bar{u},u^{2\ell}-\bar{u}) &\le (1+\kappa\Delta t)^{-\ell}
	H(u^1-\bar{u},u^{0}-\bar{u}), \\
	H(u^{2\ell+2}-\bar{u},u^{2\ell+1}-\bar{u}) &\le (1+\kappa\Delta t)^{-\ell}
	H(u^2-\bar{u},u^{1}-\bar{u}) \\ 
	&\le (1+\kappa\Delta t)^{-\ell}	H(u^1-\bar{u},u^{0}-\bar{u}),
\end{align*}
where we used \eqref{5.relH} in the last step. As in both cases
$\ell\ge (k-2)/2$, we conclude that
\begin{equation}\label{5.Hk}
  H(u^k-\bar{u},u^{k-1}-\bar{u}) \le (1+\kappa\Delta t)^{-(k-2)/2}H(u^1-\bar{u},u^{0}-\bar{u}).
\end{equation}
We want to express this inequality in terms of the $\|A^{1/2}(\cdot)\|_{0,2,\T}$ norm.
We observe that, by Young's inequality,
$\|A^{1/2}(u^k-\bar{u})\|_{0,2,\T}^2 \le 4H(u^k-\bar{u},u^{k-1}-\bar{u})$
and, in view of \eqref{2.ei.euler},
\begin{align*}
  H(u^1-\bar{u},u^0-\bar{u}) &= H(u^1)-H(\bar{u}) \le H(u^0)-H(\bar{u}) \\
  &= H(u^0-\bar{u}) = \frac12\|A^{1/2}(u^0-\bar{u})\|_{0,2,\T}^2.
\end{align*}
Then we deduce from \eqref{5.Hk} that
\begin{align*}
    \|A^{1/2}(u^k-\bar{u})\|_{0,2,\T}^2 
	&\le 4H(u^k-\bar{u},u^{k-1}-\bar{u}) 
	\le 4(1+\kappa\Delta t)^{-(k-2)/2}H(u^1-\bar{u},u^{0}-\bar{u}) \\
	&\le 2(1+\kappa\Delta t)^{-(k-2)/2}
    \|A^{1/2}(u^{0}-\bar{u})\|_{0,2,\T}^2,
\end{align*}
which concludes the proof.


\section{Proof of Theorem \ref{thm.conv}}\label{sec.conv}

We split the proof into two parts. We first prove the convergence in the space variable
and then the convergence in the time variable. An alternative is to show the convergence
in both variables simultaneously; see, e.g., \cite{JuZu22}.

\subsection{Convergence in space}

We show the following result for $\Delta x\to 0$.

\begin{proposition}[Convergence in space]\label{prop.space}
Let the assumptions of Theorem \ref{thm.conv} hold and let $(u_m^k)$ be the sequence
of solutions to \eqref{2.init}--\eqref{2.mean} constructed in Theorem \ref{thm.ex} associated to an admissible mesh $\T_m$ 
with mesh size $\Delta x_m$ for $m\in\N$
satisfying $\Delta x_m\to 0$ as $m\to\infty$.
Then there exists a subsequence which is not relabeled such that
$u_{i,m}^k\to u_i^k$ strongly in $L^2(\Omega)$ 
as $m\to\infty$ and $u_i^k$ solves for all
$\phi_i\in W^{1,\max\{2,d\}}(\Omega)$, $i=1,\ldots,n$,
\begin{equation}\label{5.BDF2}
  \frac{1}{\Delta t}\int_\Omega\bigg(\frac32 u_i^k - 2u_i^{k-1} + \frac12 u_i^{k-2}\bigg)
	\phi_i\dd x + \int_\Omega\big(\gamma\na u_i^k
	+ (u_i^k)^+\na p_i(u^k)\big)\cdot\na\phi_i\dd x = 0.
\end{equation}
\end{proposition}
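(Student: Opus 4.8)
\textbf{Proof proposal for Proposition \ref{prop.space}.}
The plan is to establish a priori estimates that are uniform in the mesh size $\Delta x_m$, pass to the limit using finite-dimensional compactness arguments adapted to a fixed $\Delta t$, and identify the limit as a weak solution of the semidiscrete problem \eqref{5.BDF2}. First I would invoke the discrete entropy inequality \eqref{2.ei.bdf}--\eqref{2.ei.euler} of Theorem \ref{thm.ex}, which together with \eqref{2.hineq} yields a bound for $\|A^{1/2}u_m^k\|_{0,2,\T_m}$ and, crucially, for the discrete gradient $|A^{1/2}u_m^k|_{1,2,\T_m}$ (here $\gamma>0$ is used), uniformly in $m$. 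By norm equivalence \eqref{2.normineq} this transfers to uniform discrete $H^1$ bounds for each component $u_{i,m}^k$. The standard discrete compactness results (e.g.\ the discrete Rellich--Kondrachov theorem in \cite{EGH00}, or the approach of \cite{CLP03}) then give, up to a subsequence, strong $L^2(\Omega)$ convergence $u_{i,m}^k\to u_i^k$ with $u_i^k\in H^1(\Omega)$, and weak convergence of the reconstructed discrete gradients $\na^{\T_m}u_{i,m}^k\rightharpoonup\na u_i^k$ in $L^2(\Omega)$; one also gets a.e.\ convergence along a further subsequence.

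The main work is passing to the limit in the weak formulation of the scheme. Fixing a test function $\phi_i\in W^{1,\max\{2,d\}}(\Omega)$, I would interpolate it onto the mesh (setting $\phi_{i,K}=\phi_i(x_K)$ or a cell average), multiply \eqref{2.sch2} by $\phi_{i,K}$, sum over $K$, and use discrete integration by parts \eqref{2.dibp} to rewrite the flux term as a sum over interior edges involving $\mathrm{D}_{K,\sigma}\phi_i$. The time-difference term converges easily because $u_{i,m}^k$, $u_{i,m}^{k-1}$, $u_{i,m}^{k-2}$ all converge strongly in $L^2$ and the interpolants of $\phi_i$ converge strongly in $L^2$ as well. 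For the diffusion term $\gamma\sum_\sigma\tau_\sigma\mathrm{D}_{K,\sigma}u_i^k\,\mathrm{D}_{K,\sigma}\phi_i$, I would rewrite it via the dual-mesh reconstruction as $\gamma\int_\Omega\na^{\T_m}u_{i,m}^k\cdot\na^{\T_m}\phi_i\,\dd x$ (up to consistency error estimates for the gradient of the smooth test function that vanish as $\Delta x_m\to 0$), and pass to the limit using weak convergence of $\na^{\T_m}u_{i,m}^k$ against strong convergence of $\na^{\T_m}\phi_i$. The convective term $\sum_\sigma\tau_\sigma(u_{i,\sigma}^k)^+\mathrm{D}_{K,\sigma}p_i(u^k)\,\mathrm{D}_{K,\sigma}\phi_i$ is the delicate one: here I would use the consistency of the mean function $M$ (property (i): $M(u,u)=u$, and the Lipschitz bound $|M(u,v)-u|\le C|u-v|$ from (ii)) together with the a.e.\ and $L^2$ convergence of $u_{i,m}^k$ to replace $(u_{i,\sigma}^k)^+$ by a quantity converging strongly to $(u_i^k)^+$, while $\mathrm{D}_{K,\sigma}p_j(u^k)$ reconstructs to $\na p_j(u^k)$ converging weakly in $L^2$; the product of a strongly convergent factor (using that $(u_{i,m}^k)^+$ is bounded in every $L^q$ by the gradient bound plus discrete Gagliardo--Nirenberg, hence equi-integrable) with a weakly convergent factor, tested against the strongly convergent $\na^{\T_m}\phi_i$, passes to the limit.

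The chief obstacle is the handling of the nonlinear convective flux: one must show that $(u_{i,m,\sigma}^k)^+\mathrm{D}_{K,\sigma}u_{j,m}^k$, reconstructed on the dual mesh, converges in the sense of distributions to $(u_i^k)^+\na u_j^k$. This requires combining strong $L^2$ (indeed strong $L^q$ for some $q>2$, via the discrete Gagliardo--Nirenberg inequality applied to the gradient bound) convergence of the mobility factor with the weak $L^2$ convergence of the discrete gradients, plus a careful control of the difference between the edge mobility $u_{i,m,\sigma}^k=M(u_{i,m,K}^k,u_{i,m,L}^k)$ and a genuine cell reconstruction of $u_{i,m}^k$ — this difference is controlled by $L|\mathrm{D}_{K,\sigma}u_{i,m}^k|$, which is small in an averaged sense because of the uniform discrete $H^1$ bound, so it does not spoil the limit. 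A secondary point is verifying that $u_i^k\ge 0$ is \emph{not} claimed here (it only appears in Theorem \ref{thm.conv} after the further limit $\Delta t\to 0$), so at this stage one only needs the $(\cdot)^+$ truncation to survive the passage to the limit, which follows from a.e.\ convergence and continuity of $z\mapsto z^+$. Finally, the identification of the limit as a solution of \eqref{5.BDF2} for \emph{all} admissible test functions follows by density of $W^{1,\max\{2,d\}}(\Omega)\cap C(\overline\Omega)$ and the continuity of each term in the weak formulation with respect to $\phi_i$ in the $W^{1,\max\{2,d\}}$ norm.
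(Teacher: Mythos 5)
Your proposal is correct and follows essentially the same route as the paper: uniform discrete $H^1$ bounds from the entropy inequality, discrete Rellich--Kondrachov compactness plus weak convergence of the reconstructed gradients as in \cite{CLP03}, splitting the convective flux into a cell-valued mobility part (handled by strong-times-weak convergence) and a remainder controlled by property (ii) of the mean function, and consistency estimates for the smooth test function on the dual mesh. The only cosmetic differences are that you invoke discrete Gagliardo--Nirenberg for the mobility factor where the paper gets by with the plain $L^2$ bound, and that you make explicit the final density argument in $\phi_i$ that the paper leaves implicit.
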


\begin{proof}
For fixed $\Delta t$, the discrete entropy inequality in Theorem \ref{thm.ex} provides a uniform bound for $\|u_m^k\|_{1,2,\T_m}$. Then, by the discrete Rellich--Kondrachov
compactness theorem \cite[Lemma 5.6]{EGH00},
there exists a subsequence of $(u_m^k)=(u_{1,m}^k,\ldots,u_{n,m}^k)$, 
which is not relabeled, such that
$u_m^k\to u^k$ strongly in $L^2(\Omega)$ as $m\to\infty$. Moreover, the sequence
of discrete gradients $(\na^m u_m^k)$ converges weakly in $L^2(\Omega)$ to some
function which can be identified by $\na u^k$; see \cite[Lemma 4.4]{CLP03}.
Let $\phi_i\in C^2(\overline\Omega)$ and set $\phi_{i,K}:=\phi_i(x_K)$ for $K\in\T$. Then the limit $\Delta x_m\to 0$ in the BDF2 approximation becomes
$$
  \frac{1}{\Delta t}\sum_{K\in\T}\m(K)\bigg(\frac32 u_{i,K}^k - 2u_{i,K}^{k-1}
	+ \frac12 u_{i,K}^{k-2}\bigg)\phi_{i,K} \to
	\frac{1}{\Delta t}\int_\Omega\bigg(\frac32 u_i^k - 2u_i^{k-1} + \frac12 u_i^k\bigg)\phi_i\dd x.
$$
Next, we set $F^m=F_1^m+F_2^m+F_3^m$, where
\begin{align*}
  F_1^m &= -\gamma\sum_{K\in\T}\sum_{\sigma\in\E_K}\tau_\sigma
  \mathrm{D}_{K,\sigma}u_{i,m}^k\phi_{i,K}, \\
  F_2^m &= -\sum_{K\in\T}\sum_{\sigma\in\E_K}\tau_\sigma(u_{i,m,K}^k)^+
	\mathrm{D}_{K,\sigma}p_i(u_m^k)\phi_{i,K}, \\
	F_3^m &= -\sum_{K\in\T}\sum_{\sigma\in\E_K}\tau_\sigma\big(
	(u_{i,m,\sigma}^k)^+ - (u_{i,m,K}^k)^+\big)\mathrm{D}_{K,\sigma}p_i(u_m^k)\phi_{i,K}.
\end{align*}
We introduce the intermediate integral $F_0^m=F_{01}^m+F_{02}^m$, where
$$
  F_{01}^m = \gamma\int_\Omega\na^m u_{m,i}^k\cdot\na\phi_i\dd x, \quad
  F_{02}^m = \int_\Omega(u_{i,m}^k)^+\na^m p_i(u_m^k)\cdot\na\phi_i\dd x.
$$
It follows from the weak convergence of the discrete gradients and the strong convergence in $L^2(\Omega)$ that $F_0^m\to F$ as $m\to\infty$, where
$$
  F = \gamma\int_\Omega\na u_i^k\cdot\na\phi_i\dd x
	+ \int_\Omega(u_i^k)^+\na p_i(u^k)\cdot\na\phi_i\dd x.
$$
Thus, if we can show that $F_0^m-F^m\to 0$, then
$|F^m-F|\le|F^m-F_0^m|+|F_0^m-F|\to 0$, proving the claim.

By discrete integration by parts and the
definition of the discrete gradient,
\begin{align*}
  F_{1}^m &= \gamma\sum_{\substack{\sigma\in\E_{\rm int} \\ 
  \sigma=K|L}}\tau_\sigma
  \mathrm{D}_{K,\sigma}u_{i,m}^k\mathrm{D}_{K,\sigma}\phi_i, \\
  F_{01}^m &= \gamma\sum_{\substack{\sigma\in\E_{\rm int} \\ \sigma=K|L}}
	\frac{\m(\sigma)}{\m(T_{K,\sigma})}\mathrm{D}_{K,\sigma}u_{i,m}^k\int_{T_{K,\sigma}}
	\na\phi_i\cdot\nu_{K,\sigma}\dd x.
\end{align*}
Using the Taylor expansion (here we need $\phi_i\in C^2(\overline\Omega)$)
$$
  \frac{\mathrm{D}_{K,\sigma}\phi_i}{\dist_\sigma}
	= \frac{\phi_{i,L}-\phi_{i,K}}{\dist(x_K,x_L)} = \na\phi_i\cdot\nu_{K,\sigma}
	+ \mathcal{O}(\Delta x_m)\quad\mbox{for }\sigma=K|L,
$$
where we have taken into account the property $x_K-x_L=\dist(x_K,x_L)\nu_{K,\sigma}$,
we obtain 
\begin{align*}
  |F_{01}^m-F_1^m| &\le \gamma\sum_{\substack{\sigma\in\E_{\rm int} \\ \sigma=K|L}}\m(\sigma)
	|\mathrm{D}_{K,\sigma}u_{i,m}^k|\bigg|\frac{1}{\m(T_{K,\sigma})}\int_{T_{K,\sigma}}
	\na\phi_i\cdot\nu_{K,\sigma}\dd x - \frac{\mathrm{D}_{K,\sigma}\phi_i}{\dist_\sigma}\bigg| \\
  &\le C\gamma\Delta x_m\sum_{\sigma\in\E_{\rm int}}
  \m(\sigma)|\mathrm{D}_\sigma u_{i,m}^k|,
\end{align*}
where $C>0$ depends on the $L^\infty$ norm of $\mathrm{D}^2\phi_i$. We apply the Cauchy--Schwarz inequality and use the mesh property \eqref{2.sum} to find that
\begin{align*}
  |F_{01}^m-F_1^m| &\le C\gamma\Delta x_m\bigg(\sum_{\sigma\in\E_{\rm int}}
    \frac{\m(\sigma)}{\dist_\sigma}|\mathrm{D}_\sigma u_{i,m}^k|^2
    \bigg)^{1/2}
	\bigg(\sum_{\sigma\in\E_{\rm int}}\m(\sigma)\dist_\sigma\bigg)^{1/2} \\
	&\le C\gamma\Delta x_m|u_{i,m}^k|_{1,2,\T_m}\bigg(\frac{d}{\zeta}\m(\Omega)\bigg)^{1/2}
	\to 0\quad\mbox{as }m\to\infty.
\end{align*}
Similar arguments lead to
\begin{align*}
  |F_{02}^m-F_2^m| &\le C\Delta x_m\sum_{K\in\T_m}\sum_{\sigma\in\E_{{\rm int},K}}\m(\sigma)
	(u_{i,m,K}^k)^+|\mathrm{D}_{K,\sigma}p_i(u_m^k)| \\
	&\le C\Delta x_m\bigg(\sum_{K\in\T_m}|(u_{i,m,K}^k)^+|^2\sum_{\sigma\in\E_{{\rm int},K}}
	\m(\sigma)\dist_\sigma\bigg)^{1/2}|p_i(u_m^k)|_{1,2,\T_m} \\
	&\le C\Delta x_m\bigg(\frac{d}{\zeta}\sum_{K\in\T_m}\m(K)|(u_{i,m,K}^k)^+|^2\bigg)^{1/2}
	|p_i(u_m^k)|_{1,2,\T_m} \\
	&\le C(\zeta)\Delta x_m\|u_{i,m}^k\|_{0,2,\T_m}|p_i(u_m^k)|_{1,2,\T_m}.
\end{align*}
The right-hand side converges to zero since 
$$
  |p_i(u_m^k)|_{1,2,\T_m}^2 = \sum_{\substack{\sigma\in\E_{\rm int} \\ \sigma=K|L}}
	\tau_\sigma \bigg(\sum_{j=1}^n a_{ij}\mathrm{D}_{K,\sigma}u_{j,m}^k\bigg)^2
	\le C(A)|u_{m}^k|_{1,2,\T_m}^2 \le C.
$$
Finally, using $|\mathrm{D}_{K,\sigma}\phi_i|\le C(\phi_i)\Delta x_m$
and property (ii) of the mean function, 
\begin{align*}
  |F_3^m| &\le \sum_{\substack{\sigma\in\E_{\rm int} \\ \sigma=K|L}}
  \tau_\sigma|u_{i,m,\sigma}^k - u_{i,m,K}^k||\mathrm{D}_{K,\sigma}p_i(u_m^k)|
  |\mathrm{D}_{K,\sigma}\phi_i| \\
  &\le C(\phi_i)\Delta x_m
  \sum_{\substack{\sigma\in\E_{\rm int} \\ \sigma=K|L}}\tau_\sigma
  |\mathrm{D}_\sigma u_{i,m}^k||\mathrm{D}_{K,\sigma}p_i(u_m^k)| \\
  &\le C(\phi_i,A)\Delta x_m\bigg(
  \sum_{\sigma\in\E}\tau_\sigma
  |\mathrm{D}_\sigma u_{i,m}^k|^2\bigg)^{1/2}
  \bigg(\sum_{j=1}^n\sum_{\sigma\in\E}
  \tau_\sigma|\mathrm{D}_\sigma u_{j,m}^k|^2\bigg)^{1/2}\to 0.
\end{align*}
This shows that $F_0^m-F\to 0$ as $m\to\infty$, concluding the proof.
\end{proof}

\subsection{Convergence in time}

We wish to perform the limit $\Delta t\to 0$ in \eqref{5.BDF2}.
For this, we need an estimate in a better space than $L^2(\Omega_T)$, provided by the
following lemma.

\begin{lemma}[Higher-order integrability]\label{lem.hoi}
Let $(u^{(\tau)})$ be a family of solutions to \eqref{5.BDF2} associated to
the time step size $\tau:=\Delta t$,
constructed in Proposition \ref{prop.space}. Then there exists $C>0$ independent of $\tau$
such that 
$$
  \|u^{(\tau)}\|_{L^p(\Omega_T)}\le C\quad\mbox{for }p=2+4/d.
$$
\end{lemma}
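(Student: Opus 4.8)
The strategy is the standard one for deriving higher-order integrability from a gradient bound via the discrete Gagliardo--Nirenberg inequality, combined with the $L^\infty(0,T;L^2)$ bound that comes for free from the Rao entropy. First I would sum the discrete entropy inequality \eqref{2.ei.bdf} (together with the Euler step \eqref{2.ei.euler}) over $k=1,\ldots,N_T$. Using the telescoping structure of the left-hand side and the lower bound $H(u^k,u^{k-1})\ge \tfrac14(3-\sqrt8)\lambda_m(|u^k|^2+|u^{k-1}|^2)$ from \eqref{2.hineq}, this yields two uniform bounds, independent of $\tau$: the discrete $L^\infty(0,T;L^2(\Omega))$ bound
$$
  \max_{k}\|u^{(\tau)}(\cdot,t_k)\|_{0,2,\T}^2 \le C H(u^0),
$$
and the discrete $L^2(0,T;H^1(\Omega))$ bound
$$
  \sum_{k=1}^{N_T}\tau\,|u^{(\tau)}(\cdot,t_k)|_{1,2,\T}^2 \le \frac{C}{\gamma\lambda_m}H(u^0),
$$
where I pass from $|A^{1/2}u^k|_{1,2,\T}$ back to $|u^k|_{1,2,\T}$ using \eqref{2.normineq}. (Here one must be slightly careful with the telescoping because the left-hand side of \eqref{2.ei.bdf} involves the pair $(u^k,u^{k-1})$ and the right-hand side the pair $(u^{k-1},u^{k-2})$, so the sum telescopes cleanly; the Euler step \eqref{2.ei.euler} provides the $k=1$ closure, and $H(u^1)\le H(u^0)$ gives control of the remaining boundary term.)

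Next I would invoke the discrete Gagliardo--Nirenberg--Sobolev inequality on $V_\T$: there is a constant $C$, depending only on $\Omega$, $d$ and the mesh-regularity constant $\zeta$, such that
$$
  \|v\|_{0,p,\T} \le C\,\|v\|_{1,2,\T}^{\theta}\,\|v\|_{0,2,\T}^{1-\theta}
$$
with $p=2+4/d$ and the corresponding interpolation exponent $\theta$ satisfying $p\theta/2 + p(1-\theta)/\text{(something)} = 1$; for $p=2+4/d$ the bookkeeping gives $p\theta = 2$, i.e. the exponent on the $H^1$-norm, after raising to the $p$-th power and integrating in time, is exactly $2$. Concretely, raising the inequality to the power $p$, integrating in time, and using the $L^\infty(0,T;L^2)$ bound on the factor $\|v\|_{0,2,\T}^{(1-\theta)p}$, one obtains
$$
  \sum_{k=1}^{N_T}\tau\,\|u^{(\tau)}(\cdot,t_k)\|_{0,p,\T}^{p}
  \le C \Big(\max_k\|u^{(\tau)}(\cdot,t_k)\|_{0,2,\T}^{(1-\theta)p}\Big)
  \sum_{k=1}^{N_T}\tau\,|u^{(\tau)}(\cdot,t_k)|_{1,2,\T}^{2} \le C,
$$
which is precisely $\|u^{(\tau)}\|_{L^p(\Omega_T)}^p\le C$ after identifying the piecewise-constant-in-time function with its values $u^{(\tau)}(\cdot,t_k)$ on $(t_{k-1},t_k]$. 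Summing over the $n$ components is harmless.

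The main obstacle is not any single hard estimate but getting the exponent bookkeeping in the Gagliardo--Nirenberg step exactly right so that the time-integrated $H^1$ power is $2$ (which is the only power we control uniformly in $\tau$) — this is what forces the specific value $p=2+4/d$ and is the reason the statement is phrased with that $p$. A secondary technical point is that the discrete Gagliardo--Nirenberg inequality must be the \emph{mean-zero} or full-$W^{1,2}$-norm version valid on admissible meshes satisfying \eqref{2.regul}; one should cite the appropriate discrete functional-inequality reference (e.g. of the type in \cite{BCF15} or \cite{CLP03}) and check that its constant depends on the mesh only through $\zeta$, so that it survives the earlier limit $\Delta x_m\to 0$ used to construct $u^{(\tau)}$ in Proposition \ref{prop.space}. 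Everything else — the telescoping, the norm equivalences \eqref{2.normineq}, \eqref{2.hineq} — is routine.
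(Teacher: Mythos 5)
Your proposal is correct and follows essentially the same route as the paper: sum the entropy inequalities \eqref{2.ei.bdf}--\eqref{2.ei.euler} to get uniform $L^\infty(0,T;L^2(\Omega))$ and $L^2(0,T;H^1(\Omega))$ bounds, then interpolate via Gagliardo--Nirenberg with $\theta=d/2-d/p$ so that $p\theta=2$. The only (harmless) difference is that you invoke the discrete Gagliardo--Nirenberg inequality on $V_\T$ and worry about mesh-dependence of its constant, whereas the paper applies the continuous inequality directly, since the $u^{(\tau)}$ of Lemma \ref{lem.hoi} are already the semidiscrete limits in $H^1(\Omega)$ constructed in Proposition \ref{prop.space}.
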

 
\begin{proof}
The lemma follows from the discrete entropy inequalities 
\eqref{2.ei.bdf}--\eqref{2.ei.euler} and the 
Gagliar\-do--Nirenberg inequality. Indeed, we infer from the
entropy inequalities after summation over $k=2,\ldots,N_T$ that
$$
  \|u^{(\tau)}\|_{L^\infty(0,T;L^2(\Omega))} 
	+ \|u^{(\tau)}\|_{L^2(0,T;H^1(\Omega))} \le C.
$$ 
Then it follows from the Gagliardo--Nirenberg inequality with $\theta=d/2-d/p$ that
\begin{align*}
  \|u^{(\tau)}\|_{L^p(0,T;L^p(\Omega))}^p 
	&\le C\int_0^T\|u^{(\tau)}\|_{H^1(\Omega)}^{p\theta}
	\|u^{(\tau)}\|_{L^2(\Omega)}^{p(1-\theta)}\dd t \\
	&\le C\|u^{(\tau)}\|_{L^\infty(0,T;L^2(\Omega))}^{p(1-\theta)}
	\int_0^T\|u^{(\tau)}\|_{H^1(\Omega)}^2\dd t\le C,
\end{align*}
since $p\theta=2$. This finishes the proof.
\end{proof}

\begin{proposition}[Convergence in time]\label{prop.time}
Let $(u^{(\tau)})$ be a family of solutions to \eqref{5.BDF2} with $\tau=\Delta t$. Then
$u^{(\tau)}$ converges to a weak solution $u$ to \eqref{1.eq}--\eqref{1.bic}
satisfying 
$$
  u_i\in L^2(0,T;H^1(\Omega))\cap L^\infty(0,T;L^2(\Omega)), \quad
  \pa_t u_i\in L^{r}(0,T;W^{1,2d+4}(\Omega)'),
$$
where $r=(2d+4)/(2d+3)>1$.
\end{proposition}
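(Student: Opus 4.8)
The plan is to pass to the limit $\tau=\Delta t\to 0$ in the weak formulation \eqref{5.BDF2}, following the standard strategy of converting the BDF2 scheme into an Euler-type formulation plus a remainder. First I would reinterpret \eqref{5.BDF2} as a time-discrete equation for the piecewise-constant-in-time interpolant $u^{(\tau)}\in V_{\T,\Delta t}^n$. The BDF2 combination can be rewritten as
$$
  \frac32 u_i^k - 2u_i^{k-1} + \tfrac12 u_i^{k-2}
  = (u_i^k - u_i^{k-1}) + \tfrac12(u_i^k - 2u_i^{k-1} + u_i^{k-2}),
$$
so the scheme reads $\tau^{-1}(u_i^k-u_i^{k-1}) + \tau^{-1}\cdot\tfrac12(u_i^k-2u_i^{k-1}+u_i^{k-2}) = \diver(\gamma\na u_i^k + (u_i^k)^+\na p_i(u^k))$ weakly. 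I would estimate the second-difference term by summing the discrete entropy inequalities \eqref{2.ei.bdf}--\eqref{2.ei.euler}: telescoping gives $\sum_k\tau|A^{1/2}u^k|_{1,2,\T}^2\le C$ and, using \eqref{1.gstab2} in the form of Lemma \ref{lem.bdf2} (the $\tfrac14|u^k-2u^{k-1}+u^{k-2}|_A^2$ term that was discarded), also $\sum_k|u^k-2u^{k-1}+u^{k-2}|_{L^2(\Omega)}^2\le C$. Hence $\tfrac12\tau^{-1}(u^k-2u^{k-1}+u^{k-2})$ is bounded in $\ell^2$ against test functions, i.e.\ bounded in $L^{r}(0,T;W^{1,2d+4}(\Omega)')$ after a discrete integration by parts, and in fact tends to zero in a suitable weak sense as $\tau\to0$ because the $L^2$-norms of the second differences are square-summable (so each individual term is $o(1)$ in an averaged sense, and the sum against a smooth test function is $O(\tau)$ by a second Abel summation). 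The first term $\tau^{-1}(u_i^k-u_i^{k-1})$ is the standard implicit-Euler discrete time derivative.

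Next I would collect the a priori bounds needed for compactness. From the entropy inequalities I get $u_i^{(\tau)}$ bounded in $L^\infty(0,T;L^2(\Omega))\cap L^2(0,T;H^1(\Omega))$, and Lemma \ref{lem.hoi} upgrades this to a bound in $L^{2+4/d}(\Omega_T)$. The truncated mobility $(u_i^k)^+$ is then bounded in $L^{2+4/d}(\Omega_T)$ as well, and since $\na p_i(u^k)=\sum_j a_{ij}\na u_j^k$ is bounded in $L^2(\Omega_T)$, the flux $(u_i^k)^+\na p_i(u^k)$ is bounded in $L^s(\Omega_T)$ for some $s>1$ (specifically $1/s = 1/2 + 1/(2+4/d) = (d+2)/(2d+4)$, so $s=(2d+4)/(d+2)$, which is $>1$). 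Together with $\gamma\na u_i^k\in L^2(\Omega_T)$, this shows the right-hand side of the equation is bounded in $L^{s}(0,T;W^{1,s'}(\Omega)')$, hence $\pa_t u_i^{(\tau)}$ is bounded there; one checks $s'\le 2d+4$ and $s\ge r=(2d+4)/(2d+3)$, which gives the claimed regularity $\pa_t u_i\in L^r(0,T;W^{1,2d+4}(\Omega)')$ after noting the embedding of dual spaces. With $u_i^{(\tau)}$ bounded in $L^2(0,T;H^1(\Omega))$ and $\pa_t u_i^{(\tau)}$ (in the sense of the full BDF2 difference quotient, including the second-difference term which I showed is harmless) bounded in $L^r(0,T;W^{1,2d+4}(\Omega)')$, the Aubin--Lions--Simon lemma in the discrete-in-time version of \cite{DrJu12} yields a subsequence with $u_i^{(\tau)}\to u_i$ strongly in $L^2(\Omega_T)$ and weakly in $L^2(0,T;H^1(\Omega))$.

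Then I would pass to the limit in \eqref{5.BDF2} integrated against a test function $\phi\in C_c^\infty([0,T)\times\overline\Omega)$ (or the appropriate time-dependent test class): the discrete time-derivative term converges to $-\int_0^T\int_\Omega u_i\,\pa_t\phi_i\,\dd x\,\dd t - \int_\Omega u_i^0\phi_i(0)\,\dd x$ by the strong $L^2$ convergence plus the vanishing of the second-difference remainder; the diffusion term $\gamma\na u_i^{(\tau)}\to\gamma\na u_i$ weakly in $L^2$; and the cross-diffusion term $(u_i^{(\tau)})^+\na p_i(u^{(\tau)})$ converges to $u_i^+\na p_i(u)$ because $(u_i^{(\tau)})^+\to u_i^+$ strongly in $L^2$ (continuity of the positive part) with an improvement to $L^{2+4/d}$ by the Lemma \ref{lem.hoi} bound and interpolation, which provides the needed equi-integrability to pair with the weakly convergent gradient. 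Finally, nonnegativity $u_i\ge 0$ is recovered in the limit: testing \eqref{5.BDF2} (or rather the original scheme) with $(u_i^k)^-$ and using that the mobility flux involves only the positive part, one derives, after iterating in $k$ as for the entropy estimate, that $\|(u_i^{(\tau)})^-\|_{L^2}$ is controlled by a quantity that vanishes, so the limit is nonnegative; alternatively one inherits nonnegativity from the space-limit construction if it was already established there, but since the BDF2 scheme is not known to preserve it, the argument must be done at the level of the limiting PDE using that $\gamma>0$ forces $\na u_i\cdot\na(u_i)^-$ sign information. The main obstacle I anticipate is twofold: first, correctly handling the compactness with the BDF2 difference quotient — one must verify that the $\tfrac12(u^k-2u^{k-1}+u^{k-2})$ piece is genuinely negligible (not merely bounded) so that the limiting equation has the correct $\pa_t u$ and not a spurious extra term, which relies crucially on the square-summability of the second differences coming from the full Lemma \ref{lem.bdf2} identity rather than just the inequality; second, passing to the limit in the product $(u_i^{(\tau)})^+\na p_i(u^{(\tau)})$, where one weak and one strong factor is not quite enough in $L^2\times L^2$ and the higher integrability from the Gagliardo--Nirenberg estimate (Lemma \ref{lem.hoi}) is exactly what rescues the argument — so the proof must carefully track exponents to stay strictly above $1$.
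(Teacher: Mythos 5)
Your overall architecture (a priori bounds from the entropy inequality, higher integrability from Lemma \ref{lem.hoi}, flux bounds in a dual space, discrete Aubin--Lions, and passage to the limit in the nonlinear term by pairing strong $L^q$ convergence with the weakly convergent gradient) matches the paper's proof. But there is a genuine gap at the compactness step. The Aubin--Lions lemma of \cite{DrJu12} does not take the BDF2 difference quotient as input: its hypothesis is a bound of the form $\tau^{-1}\|u^{(\tau)}-\pi_\tau u^{(\tau)}\|_{L^{r}(\tau,T;W^{1,2d+4}(\Omega)')}\le C$, i.e.\ a bound on the \emph{backward} difference $u^k-u^{k-1}$. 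Your route to this is the decomposition $u^k-u^{k-1}=\mathrm{D}_{\Delta t}u^k-\tfrac12(u^k-2u^{k-1}+u^{k-2})$ together with the square-summability $\sum_k\|u^k-2u^{k-1}+u^{k-2}\|_{L^2(\Omega)}^2\le C$ coming from the discarded term $\tfrac14|u-2v+w|_A^2$ in Lemma \ref{lem.bdf2}. That bound is indeed available, but it is too weak: it only says $\|u^k-2u^{k-1}+u^{k-2}\|_{L^2}\sim\tau^{1/2}$ on average, so $\tau^{-1}\|u^k-2u^{k-1}+u^{k-2}\|\sim\tau^{-1/2}$, and the corresponding contribution to $\tau^{-1}\|u^{(\tau)}-\pi_\tau u^{(\tau)}\|_{L^{r}}$ (or even to the $L^1$-in-time version) blows up like $\tau^{-1/2}$. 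So the second-difference piece is \emph{not} harmless for compactness, even though it is harmless for identifying the limit against smooth test functions by double summation by parts (that part of your argument is fine, modulo boundary terms). The paper closes this gap differently: it writes $u_i^k-u_i^{k-1}=\tfrac23\big(\tfrac32u_i^k-2u_i^{k-1}+\tfrac12u_i^{k-2}\big)+\tfrac13(u_i^{k-1}-u_i^{k-2})$ and absorbs the shifted term $\tfrac13\|u^{(\tau)}-\pi_\tau u^{(\tau)}\|$ into the left-hand side, which transfers the dual-norm bound on the BDF2 quotient (obtained from the equation itself) directly to the backward difference. Without this absorption step your proof does not reach the hypotheses of the compactness lemma.

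Two smaller points: your Hölder exponent for the flux is miscomputed ($1/2+d/(2d+4)=(2d+2)/(2d+4)$, giving $s=(2d+4)/(2d+2)$, not $(2d+4)/(d+2)$; the paper instead tests directly with $\phi_i\in L^{2d+4}(0,T;W^{1,2d+4}(\Omega))$ and reads off $r=(2d+4)/(2d+3)$), and your sketch of nonnegativity by testing with $(u_i^k)^-$ at the discrete level is not carried out in the paper and is unlikely to work as stated, since the scheme is not known to preserve positivity; the nonnegativity of the limit must be obtained by a separate argument.
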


\begin{proof}
We estimate the discrete time derivative
$\mathrm{D}_{\tau} u_i^{(\tau)}(t):= \frac32 u_i^k-2u_i^{k-1}+\frac12 u_i^{k-2}$
for $t\in[k\tau,(k+1)\tau)$ for $k\ge 2$. Let $\phi_i\in L^{2d+4}(0,T;W^{1,2d+4}(\Omega))$. Then
\begin{align*}
  \frac{1}{\tau}&\int_{2\tau}^T\big|\langle\mathrm{D}_\tau u_i^{(\tau)},\phi_i\rangle_{W^{1,d+2}(\Omega)'}\big|^r\dd t \\
	&\le \gamma^r C\int_{2\tau}^T\int_\Omega
    |\na u_i^{(\tau)}\cdot\na\phi_i|^r\dd x\dd t 
	+ C\int_{2\tau}^T\int_\Omega|(u_i^{(\tau)})^+\na p_i(u^{(\tau)})\cdot\na\phi_i|^r \dd x\dd t \\
	&\le \gamma^r C\|\na u_i^{(\tau)}\|_{L^2(\Omega_T)}^r
    \|\na\phi_i\|_{L^{2d+4}(\Omega_T)}^r \\
	&\phantom{xx}{}
	+ C\|u_i^{(\tau)}\|_{L^{(2d+4)/d}(\Omega_T)}^r
    \|\na p_i(u^{(\tau)})\|_{L^{2}(\Omega_T)}^r
	\|\na\phi_i\|_{L^{2d+4}(\Omega_T)}^r \\
	&\le C\|\phi_i\|_{L^{2d+4}(0,T;W^{1,2d+4}(\Omega))}^r,
\end{align*}
where we used the fact that $p_i(u^{(\tau)})$ is a linear combination of all
$u_j^{(\tau)}$ for $j=1,\ldots,n$. This implies the bound
$\tau^{-1}\|\mathrm{D}_\tau u_i^{(\tau)}\|_{L^r(2\tau,T;W^{1,2d+4}(\Omega)')}\le C$.

Let $\pi_\tau u^{(\tau)}(t)=u^{(\tau)}(t-\tau)$ be a shift operator.
We relate the implicit Euler scheme and the BDF2 scheme by
$$
  u^k_i-u_i^{k-1} = \frac23\bigg(\frac32 u_i^k-2u_i^{k-1}+\frac12u_i^{k-2}\bigg)
	+ \frac13(u_i^{k-1}-u_i^{k-2}).
$$
Then
\begin{align*}
  \|u^{(\tau)}&-\pi_\tau u^{(\tau)}\|_{L^{r}(2\tau,T;
  W^{1,2d+4}(\Omega)')}
	= \Big\|\frac23\mathrm{D}_\tau u^{(\tau)} + \frac13\pi_\tau(u^{(\tau)}-\pi_\tau u^{(\tau)})
	\Big\|_{L^{r}(2\tau,T;W^{1,2d+4}(\Omega)')} \\
	&\le \frac23\|\mathrm{D}_\tau u^{(\tau)}\|_{L^{r}(2\tau,T;W^{1,2d+4}(\Omega)')}
	+ \frac13\|u^{(\tau)}-\pi_\tau u^{(\tau)}\|_{L^{r}(\tau,T-\tau;W^{1,2d+4}(\Omega)')}.
\end{align*}
Adding $\|u^{(\tau)}-\pi_\tau u^{(\tau)}\|_{L^{r}(2\tau,T;W^{1,2d+4}(\Omega)')}\le C_1$ 
from the first Euler step (proved in a similar way as above) 
to the left-hand side
and absorbing the last term on the right-hand side by the left-hand side, we find that
$$
  \frac{2}{3\tau}\|u^{(\tau)}-\pi_\tau u^{(\tau)}\|_{L^{r}(2\tau,T;W^{1,2d+4}(\Omega)')}
	\le \frac{2}{3\tau}\|\mathrm{D}_\tau u^{(\tau)}\|_{L^{r}(2\tau,T;W^{1,2d+4}(\Omega)')}
	\le C.
$$
Together with the uniform $L^2(0,T;H^1(\Omega))$ bound for $u^{(\tau)}$, we can apply
the Aubin--Lions compactness lemma in the version of \cite{DrJu12} to conclude that,
up to a subsequence, as $\tau\to 0$,
$$
  u^{(\tau)}\to u\quad\mbox{strongly in }L^2(\Omega_T).
$$
In view of the higher-order estimate of Lemma \ref{lem.hoi}, this convergence also holds
in $L^q(\Omega_T)$ for all $q<2+4/d$. Furthermore, again up to a subsequence, 
$$
  \mathrm{D}_\tau u^{(\tau)}\rightharpoonup \pa_t u\quad\mbox{weakly in }
	L^{r}(2\tau,T;W^{1,2d+4}(\Omega)').
$$
These convergences are sufficient to pass to the limit $\tau\to 0$ in \eqref{5.BDF2} for test functions $\phi_i\in L^{2d+4}(2\tau,T;
W^{1,2d+4}(\Omega)')$.
\end{proof}


\section{Second-order convergence}\label{sec.second}

As in the previous section, we set
$\mathrm{D}_{\Delta t}u_i^k = \frac32 u_i^k - 2u_i^{k-1} + \frac12u_i^{k-2}$
and write \eqref{5.BDF2} as
\begin{equation}\label{6.uk}
  \frac{1}{\Delta t}\int_\Omega\mathrm{D}_{\Delta t}u_i^k\phi_i\dd x
	+ \int_\Omega\big(\gamma\na u_i^k + (u_i^k)^+\na p_i(u^k)\big)
    \cdot\na\phi_i\dd x	= 0.
\end{equation}
A Taylor expansion shows that, for some $\xi_k\in(0,T)$,
$$
  \mathrm{D}_{\Delta t}u_i(t_k):=\frac32 u_i(t_k)-2u_i(t_{k-1})+\frac12 u_i(t_{k-2})
	= (\Delta t)\pa_t u_i(t_k) - \frac{(\Delta t)^3}{3}\pa_t^3 u_i(\xi_k).
$$
Then, using a test function $\phi_i\in H^1(\Omega)$ in \eqref{1.eq},
\begin{equation}\label{6.u}
  \frac{1}{\Delta t}\int_\Omega\mathrm{D}_{\Delta t}u_i(t_k)\phi_i\dd x
	+ \int_\Omega(\gamma\na u_i + u_i\na p_i(u))(t_k)\cdot\na\phi_i\dd x
	= \frac{(\Delta t)^2}{3}\int_\Omega\pa_t^3 u_i(\xi_k)\phi_i\dd x.
\end{equation}
We take the difference of \eqref{6.uk} and \eqref{6.u}, 
choose the test function $\phi_i=p_i(u(t_k))-p_i(u^k)=(A(u(t_k)-u^k))_i$,
and sum over $i=1,\ldots,n$:
\begin{align}\label{6.aux}
  & \frac{1}{\Delta t}\int_\Omega\mathrm{D}_{\Delta t}(u(t_k)-u^k)^TA(u(t_k)-u^k)\dd x = I_7 + I_8, \quad\mbox{where}, \\
  & I_7 = -\sum_{i=1}^n\int_\Omega\big[\gamma\na(u_i(t_k)-u_i^k) 
  + u_i(t_k)\na p_i(u(t_k))-(u_i^k)^+\na p_i(u^k)\big] \nonumber \\
  &\phantom{xxxx}{\times}\na(A(u(t_k)-u^k))_i\dd x, \nonumber \\
  & I_8 = \frac{(\Delta t)^2}{3}\sum_{i=1}^n\int_\Omega\pa_t^3 u_i(\xi_k)(A(u(t_k)-u^k))_i\dd x. \nonumber
\end{align}
Set $v^k:=u(t_k)-u_i^k$. It follows from the BDF2 inequality in Lemma \ref{lem.bdf2}, applied to the left-hand side, 
that
$$
  \frac{1}{\Delta t}\int_\Omega\mathrm{D}_{\Delta t}(u(t_k)-u^k)^TA(u(t_k)-u^k)\dd x
	\ge \frac{1}{\Delta t}\big(H(v^k,v^{k-1})-H(v^{k-1},v^{k-2})\big).
$$
For the terms $I_7$ and $I_8$, we use the definition $p_i(u^k)=(Au^k)_i$, the Lipschitz continuity of $z\mapsto z^+$, the nonnegativity of $u_i$, and Young's inequality:
\begin{align*}
  I_7 &= -\sum_{i=1}^n\int_\Omega\gamma\na(A^{1/2}v^k)_i\cdot\na(A^{1/2}v^k)_i\dd x \\
	&\phantom{xx}{}- \sum_{i=1}^n\int_\Omega\big((u_i(t_k)-(u_i^k)^+)\na(Au(t_k))_i
	+ (u_i^k)^+\na(A(u(t_k)-u^k))_i\big)\cdot\na(Av^k)_i\dd x \\
	&\le -\gamma\|\na(A^{1/2}v^k)\|_{L^2(\Omega)}^2
	+ \lambda_m^{-1/2}\|A^{1/2}v^k\|_{L^2(\Omega)}\lambda_M^{3/2}\|\na u(t_k)\|_{L^\infty(\Omega)}
	\|\na(A^{1/2}v^k)\|_{L^2(\Omega)} \\
    &\phantom{xx}{}- \sum_{i=1}^n\int_\Omega(u_i^k)^+|\na(Av^k)_i|^2\dd x
	\le \frac{\lambda_M^3}{4\gamma\lambda_m}\|\na u\|_{L^\infty(\Omega_T)}^2
	\|A^{1/2}v^k\|_{L^2(\Omega)}^2 \quad\mbox{and} \\
	I_8 &\le \frac{(\Delta t)^2}{3\lambda_m^{1/2}}\|\pa_t^3 u\|_{L^\infty(0,T;L^2(\Omega))}\|A^{1/2}v^k\|_{L^2(\Omega)}.
\end{align*}
Summarizing, we obtain from \eqref{6.aux}
\begin{align}\label{6.aux3}
  & H(v^k,v^{k-1})-H(v^{k-1},v^{k-2}) \le C_1\Delta t	\|A^{1/2}v^k\|_{L^2(\Omega)}^2 
	+ C_2(\Delta t)^3\|A^{1/2}v^k\|_{L^2(\Omega)}, \\
  &\mbox{where}\quad
  C_1 = \frac{\lambda_M^3}{4\gamma\lambda_m}\|\na u\|_{L^\infty(\Omega_T)}^2, \quad
	C_2 = \frac{1}{3\lambda_m^{1/2}}\|\pa_t^3 u\|_{L^\infty(0,T;L^2(\Omega))}. \nonumber
\end{align}
We iterate this inequality once more and use the inequality $a+b\le\sqrt{2(a^2+b^2)}$
as well as the norm equivalence \eqref{2.hineq}:
\begin{align*}
  H(v^k,v^{k-1})-H(v^{k-2},v^{k-3}) &\le C_1\Delta t\big(\|A^{1/2}v^k\|_{L^2(\Omega)}^2 
	+ \|A^{1/2}v^{k-1}\|_{L^2(\Omega)}^2\big) \\
  &\phantom{xx}{}+ C_2(\Delta t)^3\big(\|A^{1/2}v^k\|_{L^2(\Omega)} 
	+ \|A^{1/2}v^{k-1}\|_{L^2(\Omega)}\big) \\
	&\le C_1\Delta t\big(\|A^{1/2}v^k\|_{L^2(\Omega)}^2 
	+ \|A^{1/2}v^{k-1}\|_{L^2(\Omega)}^2\big) \\
  &\phantom{xx}{}+ \sqrt{2}C_2(\Delta t)^3\big(\|A^{1/2}v^k\|_{L^2(\Omega)}^2
	+ \|A^{1/2}v^{k-1}\|_{L^2(\Omega)}^2\big)^{1/2} \\
	&\le \frac{4C_1\Delta t}{3-\sqrt{8}}H(v^k,v^{k-1}) 
	+ \frac{4\sqrt{2}C_2(\Delta t)^3}{3-\sqrt{8}} H(v^k,v^{k-1})^{1/2}.
\end{align*}
We apply Young's inequality for $\eps>0$:
$$
  \bigg(1 - \frac{4(C_1+\eps)}{3-\sqrt{8}}\Delta t\bigg)H(v^k,v^{k-1})
	\le H(v^{k-2},v^{k-3})
    + \frac{2C_2^2(\Delta t)^5}{(3-\sqrt{8})\eps},
$$
and assume that $\Delta t < (3-\sqrt{8})/(4(C_1+\eps)$).
This recursion is of the form $a_k\le ba_{k-2}+bc(\Delta t)^5$, where
$a_k=H(v^k,v^{k-1})$ and
$$
  b = \bigg(1 -  \frac{4(C_1+\eps)}{3-\sqrt{8}}\Delta t\bigg)^{-1}, \quad
	c = \frac{2C_2^2(\Delta t)^5}{(3-\sqrt{8})\eps},
$$
and it can be resolved explicitly depending on whether $k$ is odd or even:
$$
  a_{2\ell+1}\le b^\ell a_1 + c(\Delta t)^5\sum_{j=0}^{\ell-1}b^j, \quad
	a_{2\ell+2}\le b^\ell a_2 + c(\Delta t)^5\sum_{j=0}^{\ell-1}b^j.
$$
The sum can be estimated according to
$$
  \sum_{j=0}^{\ell-1}b^j = \frac{b^\ell-1}{b-1} 
	\le \bigg(1 - \frac{4\Delta t}{3-\sqrt{8}}(C_1+\eps)\bigg)^{-\ell+1}
	 \frac{3-\sqrt{8}}{4\Delta t(C_1+\eps)}.
$$
Since $\ell=t_\ell/\Delta t\le T/\Delta t$, the bracket approximates the exponential function and can be bounded by a constant depending only 
on $C_1+\eps$ and $T$.
This shows that there exist constants $K_1$, $K_2>0$ such that
\begin{align*}
  H(v^{2\ell+1},v^{2\ell}) &\le K_1(C_1,\eps,T)H(v^1,v^0) 
	+ K_2(C_1,C_2,\eps^{-1},T)(\Delta t)^4, \\
  H(v^{2\ell+2},v^{2\ell+1}) &\le K_1(C_1,\eps,T)H(v^2,v^1) 
	+ K_2(C_1,C_2,\eps^{-1},T)(\Delta t)^4.
\end{align*}
Going back to inequality \eqref{6.aux3} for $k=2$, we can argue in a similar way as before
that $H(v^2,v^1)$ is bounded by $K_3 H(v^1,v^0)+K_4(\Delta t)^5$ 
for some constants $K_3$, $K_4>0$, which are independent of $\Delta t$.
Furthermore, since $v^0=0$, we have $H(v^1,v^0) = (5/4)\|A^{1/2}(u(t_1)-u^1)\|_{L^2(\Omega)}
\le K_5(\Delta t)^4$ for some $K_5>0$ independent of $\Delta t$. This shows that $H(v^k,v^{k-1})\le K_6(\Delta t)^4$, where $K_6$ depends on
$C_1$, $C_2$, $\eps^{-1}$, and $T$.
Taking the square root and using \eqref{2.hineq}
shows the result.


\section{Numerical examples}\label{sec.num}

The finite-volume scheme \eqref{2.init}--\eqref{2.mean} is implemented in Matlab, using the mobility $M(u,v)$ $=\frac12(u+v)$.
As the numerical scheme is implicit, we have solved the nonlinear system of equations at each time step by using the Matlab routine
{\tt fsolve}, based on Newton's method with trust regions. The optimality tolerance was chosen as $10^{-14}$.

\subsection{First example: one-dimensional domain, three species}

We choose the domain $\Omega=(0,1)$, the parameter $\gamma=1/2$, as well as the positive definite matrix $A$ and the initial 
data $u^0$ according to
$$
  A = \begin{pmatrix}
	2 & 1 & 1/2 \\ 1 & 3 & 3/2 \\ 1/2 & 3/2 & 1 
	\end{pmatrix}, \quad 
  u^0(x) = \begin{pmatrix} \cos(\pi x)+2 \\ 2-\cos(2\pi x) \\ 2 \end{pmatrix}.
$$
The numerical parameters are $\Delta x=1/12\,800$ and 
$\Delta t=1/128$. 
The numerical solution is illustrated in Figure \ref{fig.one} at various times.
All components converge to the constant steady state $\bar{u}=2$.
Interestingly, although initially equal to the steady state, the density $u_3$ 
becomes nonconstant for positive times before it tends to the constant steady
state for large times. Such a phenomenon is sometimes called uphill diffusion,
which typically appears in thermodynamic multicomponent systems due to cross diffusion \cite{Kri15}.

\begin{figure}[ht]
\hspace*{-6mm}
\includegraphics[width=0.35\textwidth]{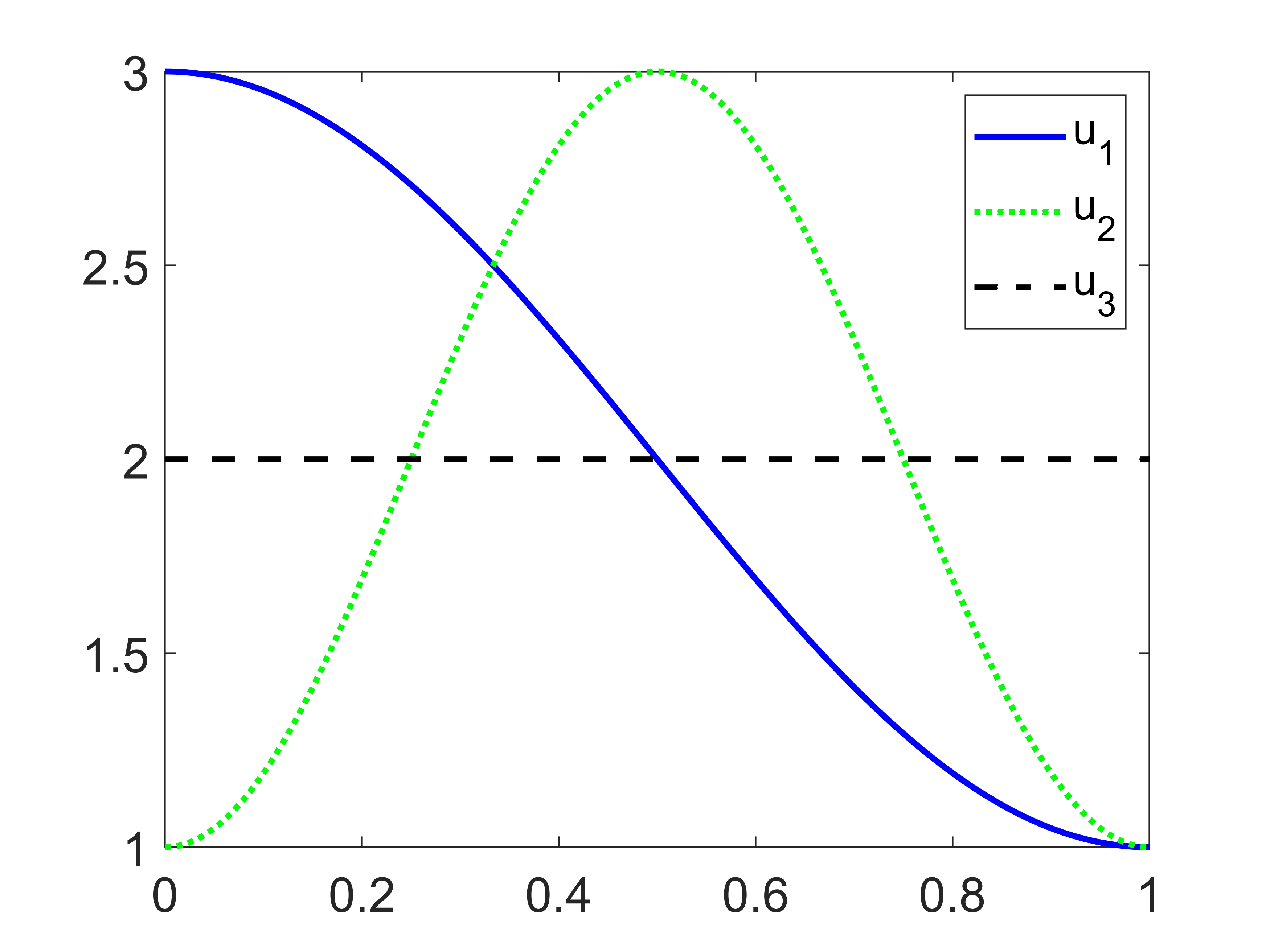}
\hskip-5mm
\includegraphics[width=0.35\textwidth]{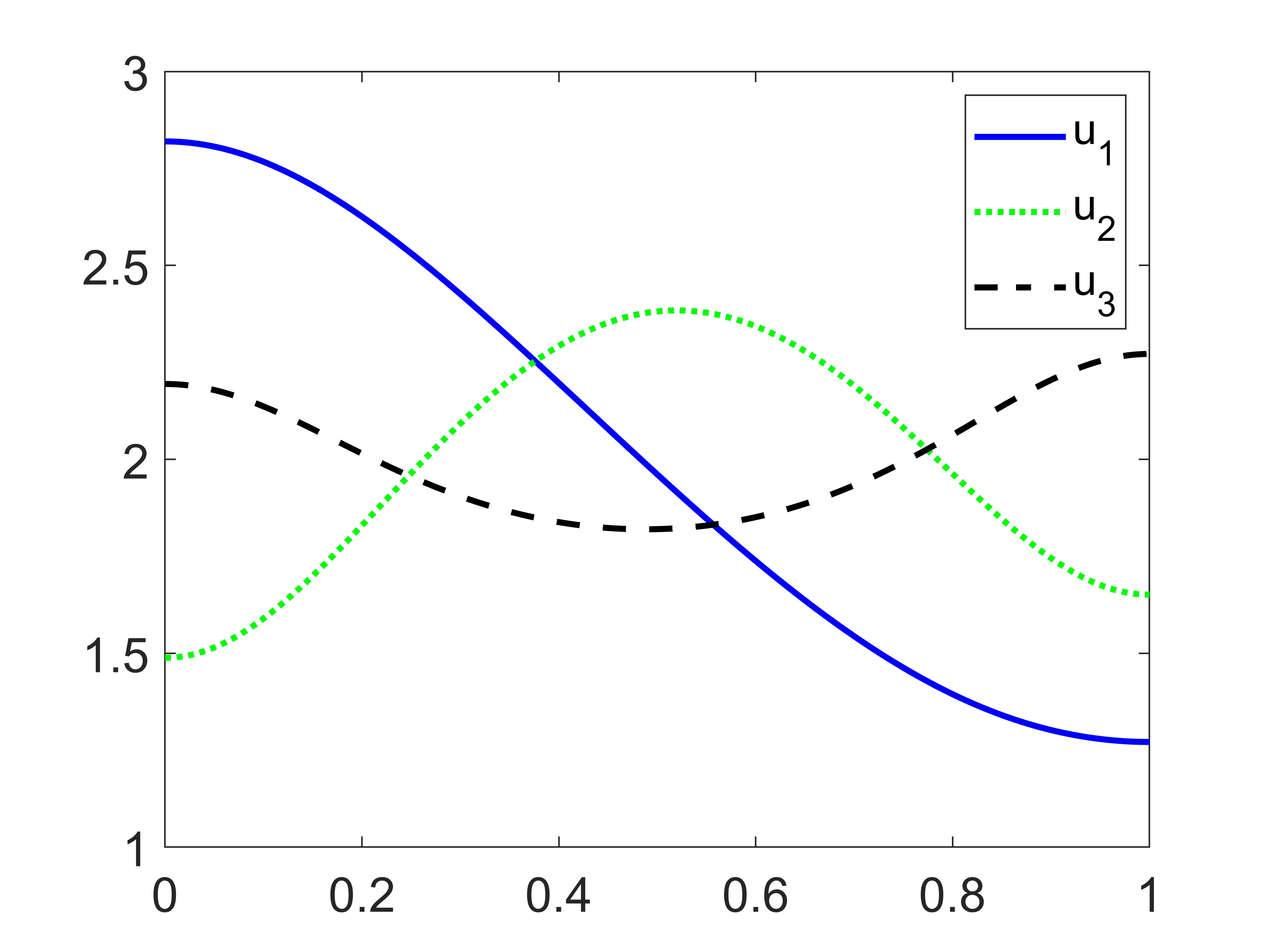}
\hskip-5mm
\includegraphics[width=0.35\textwidth]{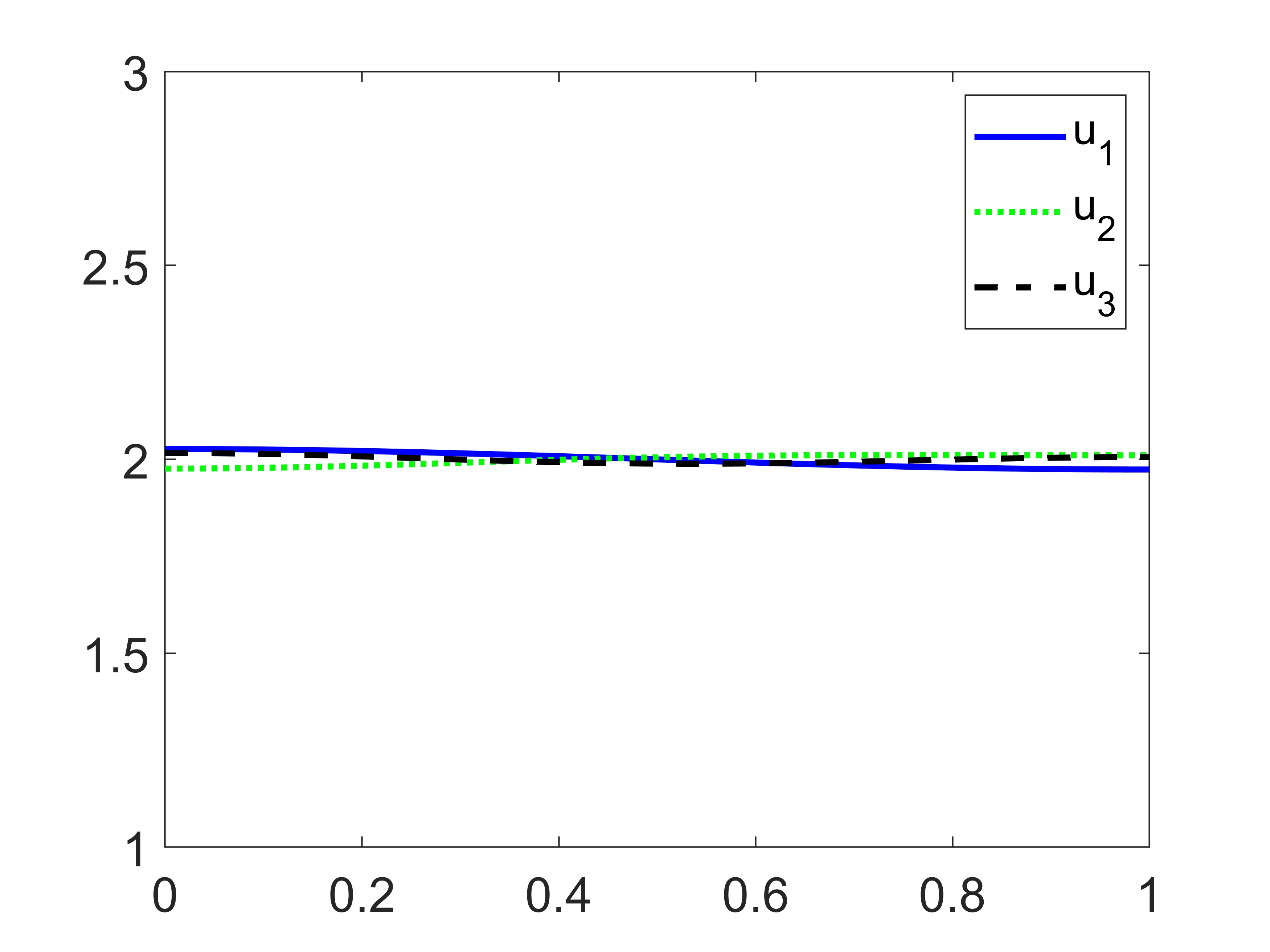}
\caption{Densities $u_1(t)$ (darker blue line), $u_2(t)$ (lighter green line), $u_3(t)$ (dashed black line) at times
$t=0,0.01,0.1$ (from left to right) versus space.}
\label{fig.one}
\end{figure}

\subsection{Second example: two-dimensional domain, two species}

We take $\Omega=(0,1)^2$, $\Delta x=\sqrt{2}\cdot 2^{-5}\approx 0.0044$, $\Delta t=1/256$, $\gamma=1/2$, and
$$
  A = \begin{pmatrix} 1 & 1/2 \\ 1/2 & 1 \end{pmatrix}, \quad
  u^0(x) = \begin{pmatrix} \mathrm{1}_{(0,1/2)^2}(x) \\ \mathrm{1}_{(1/2,1)^2}(x)
  \end{pmatrix}.
$$
Figure \ref{fig.two} shows the evolution of $u=(u_1,u_2)$ at various times.
Although being discontinuous and segregated initially, the solution becomes 
smooth and mixes the densities for positive times. This is not surprising, as
full segregation (i.e., the supports of $u_1$ and $u_2$ do not intersect)
is expected only when $\gamma=0$ and $\det A=0$. The numerical scheme preserved the nonnegativity in all our experiments, even for the initial data of this example. The numerical solutions are the same with or without the cutoff used in \eqref{2.flux}.

\begin{figure}[ht]
\includegraphics[width=0.32\textwidth]{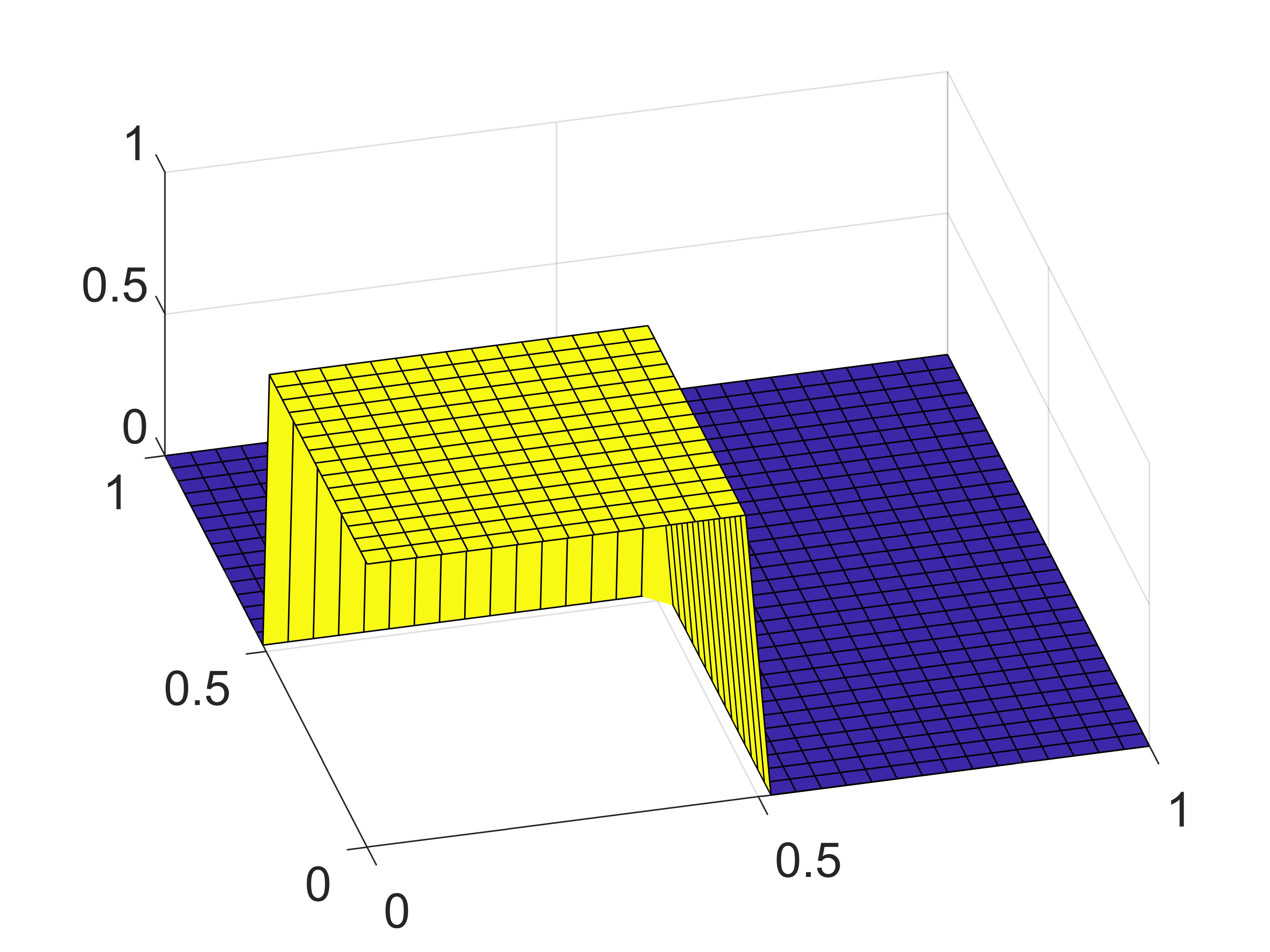}
\includegraphics[width=0.32\textwidth]{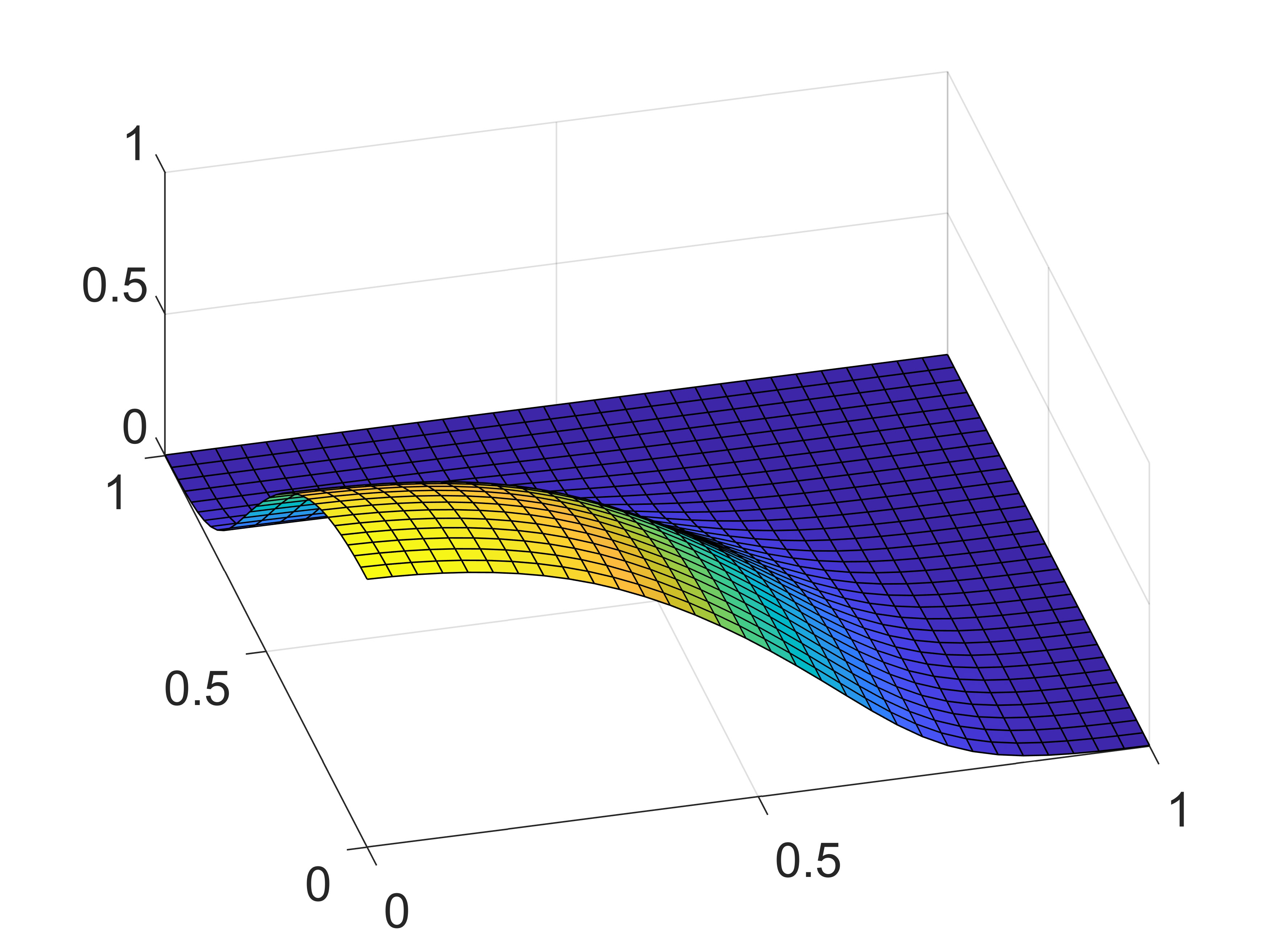}
\includegraphics[width=0.32\textwidth]{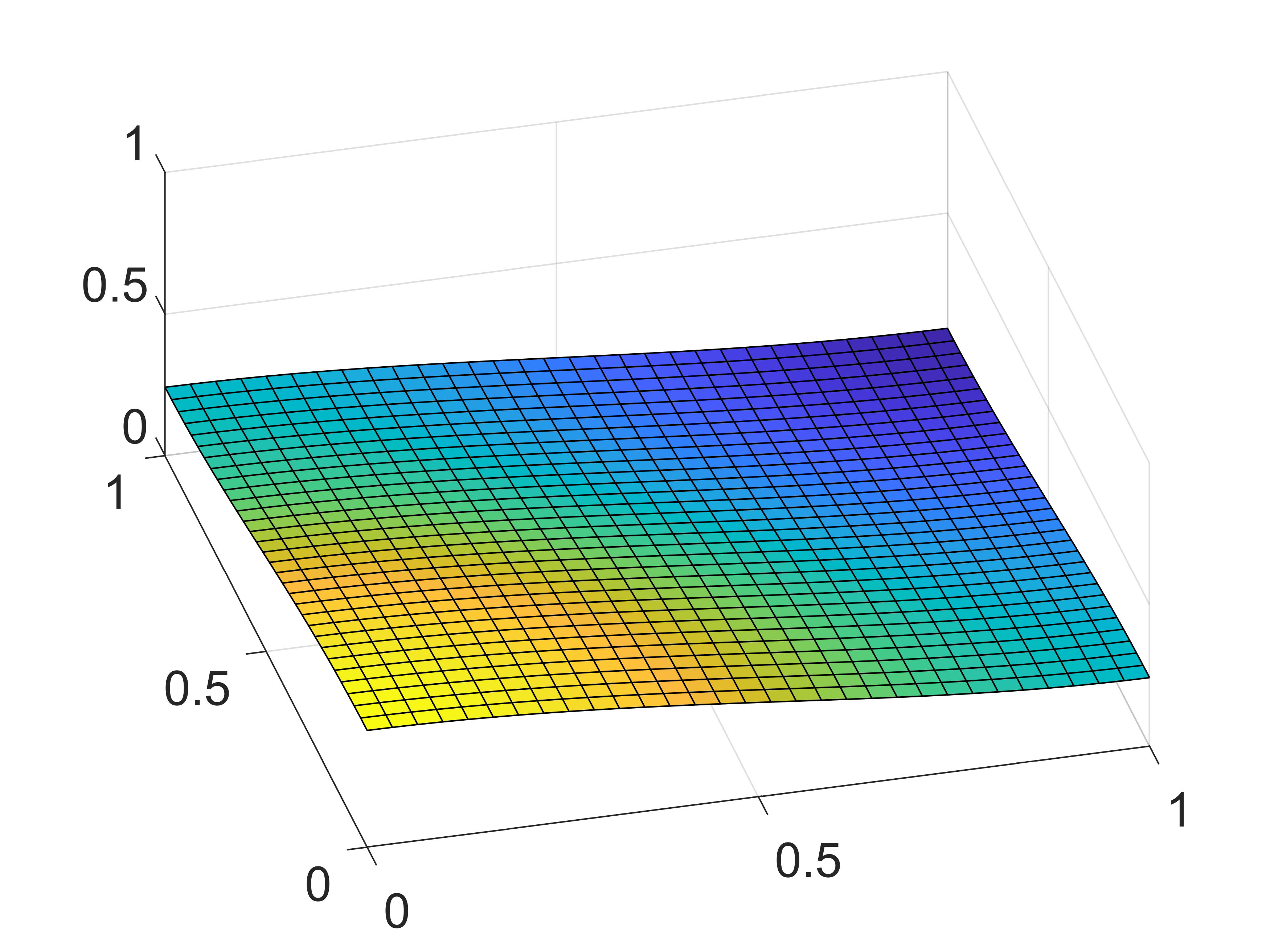}
\includegraphics[width=0.32\textwidth]{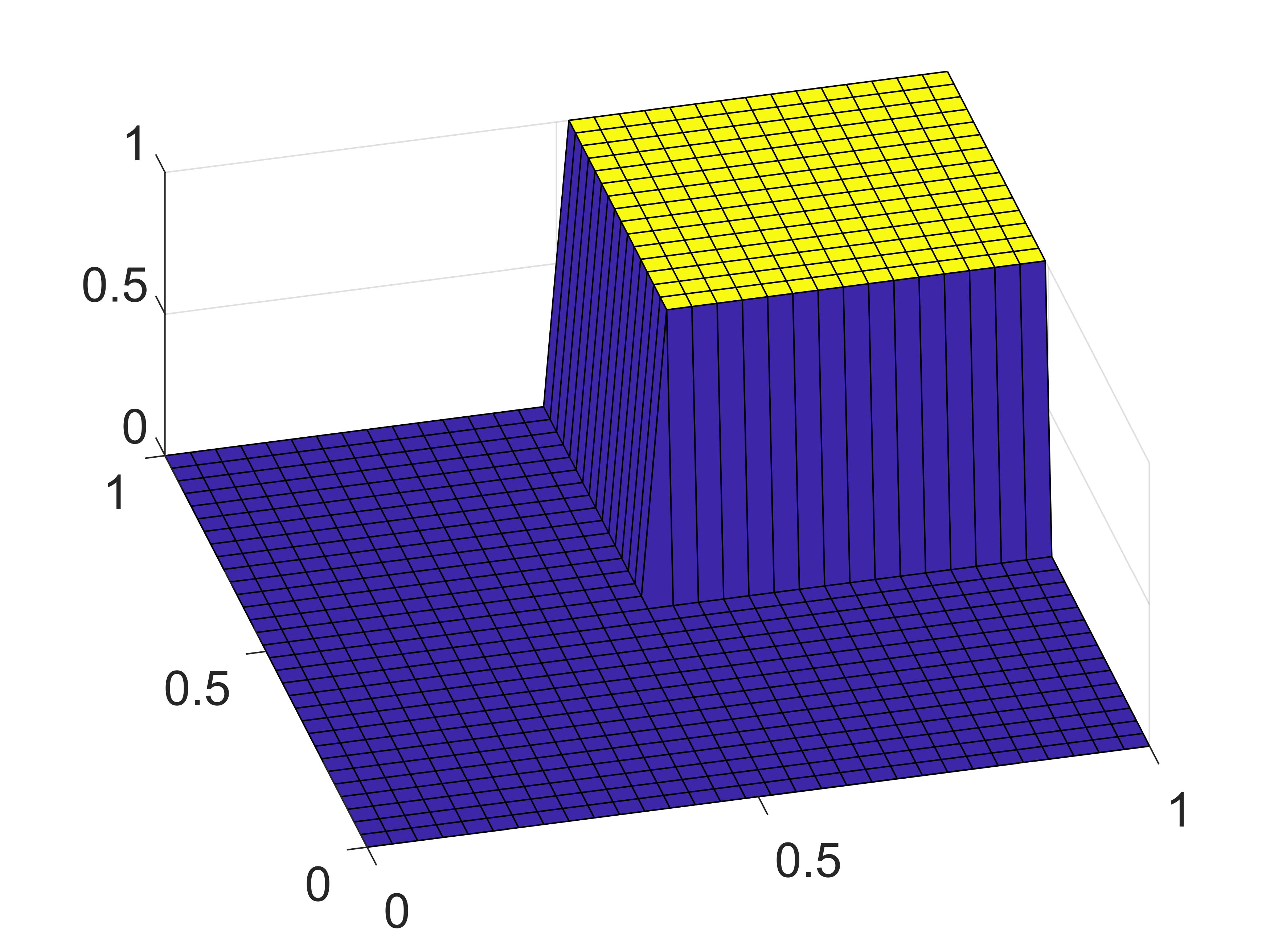}
\includegraphics[width=0.32\textwidth]{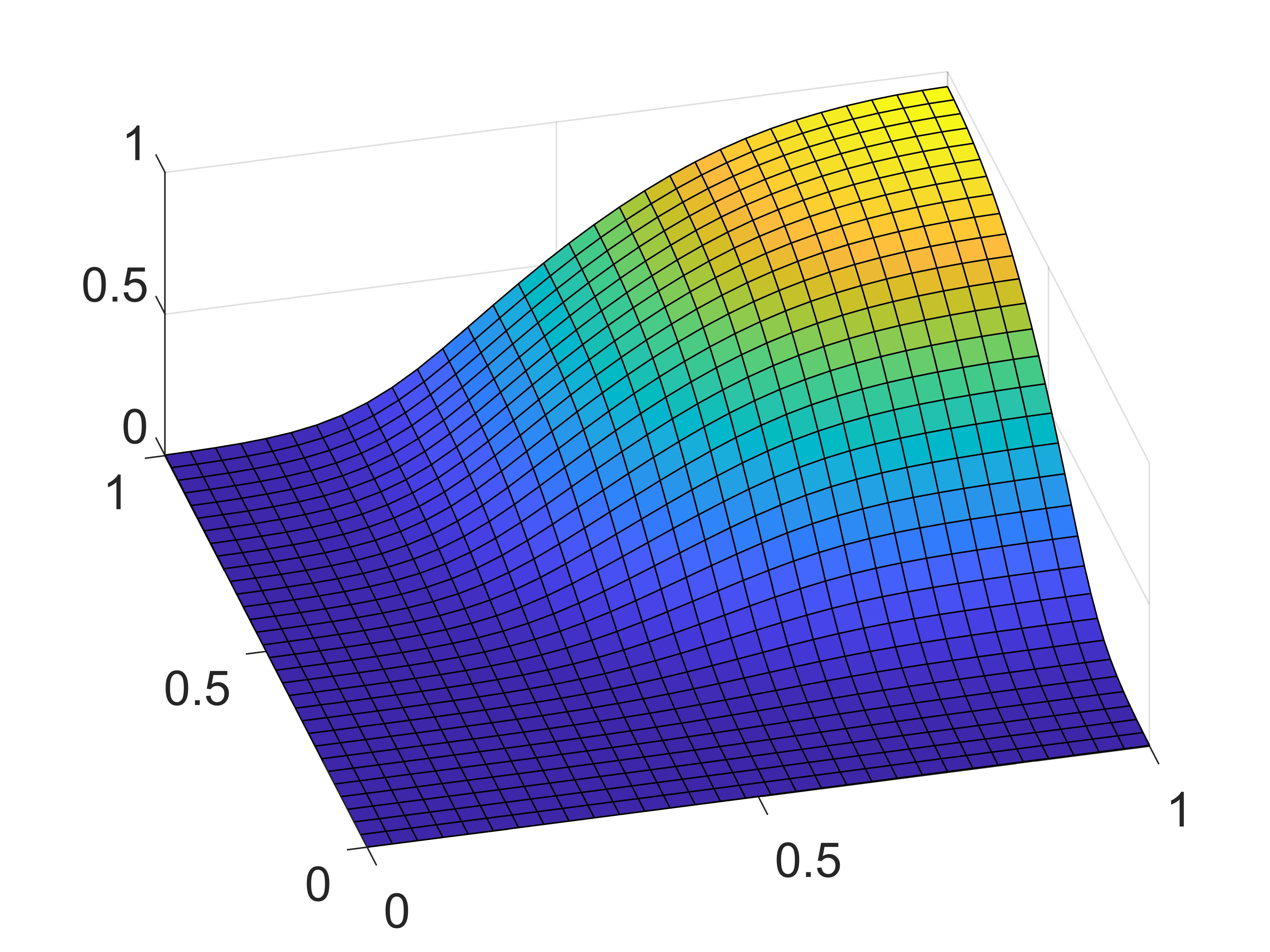}
\includegraphics[width=0.32\textwidth]{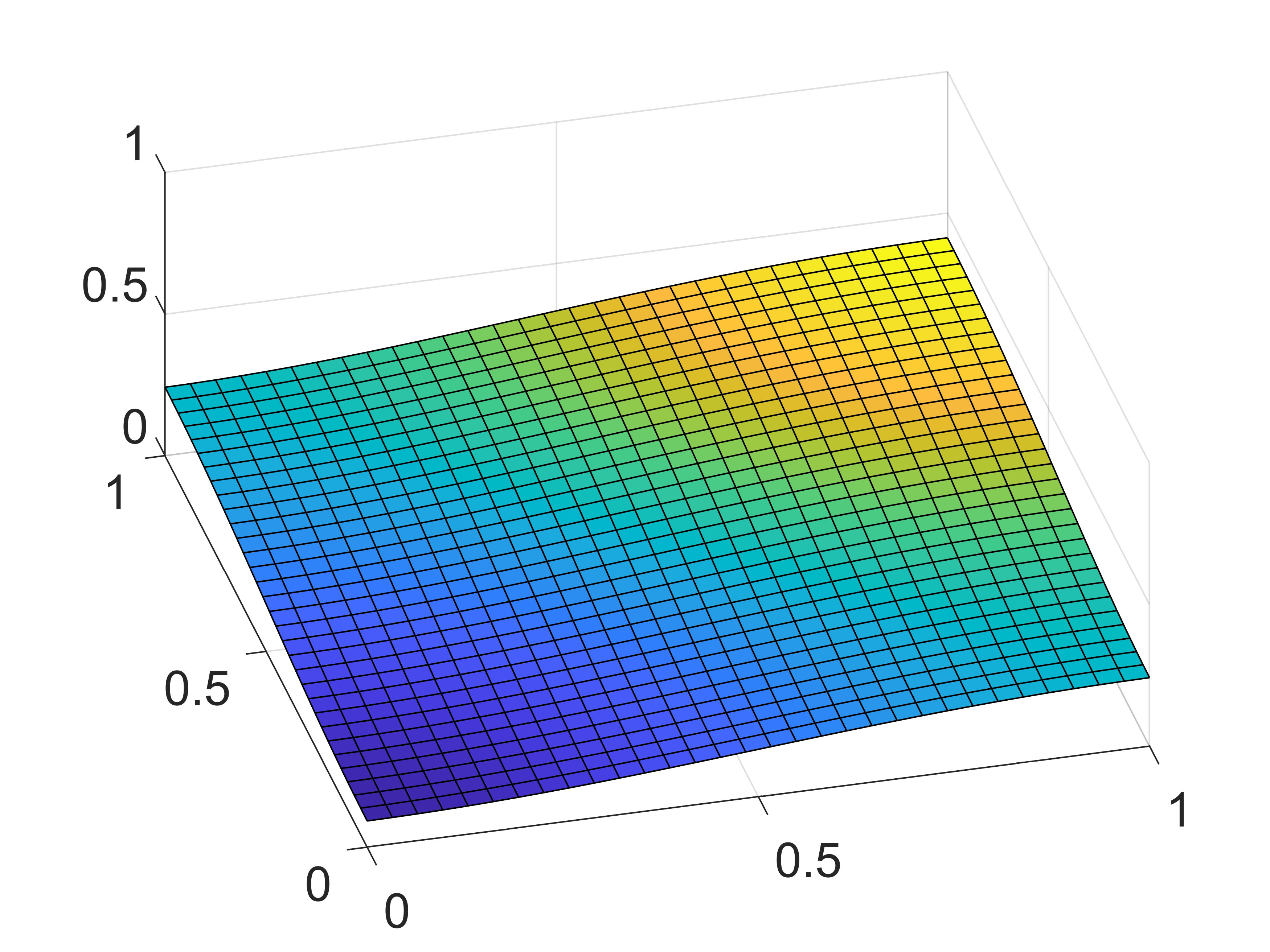}
\caption{Density $u_1(t)$ (upper row) and $u_2(t)$ (lower row) at times
$t=0,0.02,0.2$ (from left to right) versus space.}
\label{fig.two}
\end{figure}

\subsection{Third example: exponential time decay}

We choose the one-dimensional domain $\Omega=(0,1)$,
$\gamma=0.1$, $\Delta x=2^{-7}$, $\Delta t=(10\cdot 2^7)^{-1}$, 
and 
$$
  A = \begin{pmatrix} \beta & 2 \\ 2 & 1 \end{pmatrix}, \quad
  u^0(x) = \begin{pmatrix} 2-\cos(\pi x) \\ 2+\cos(\pi x) \end{pmatrix},
$$
where $\beta>4$. 
The distance $\|A^{1/2}(u^k-\bar{u})\|_{L^2(\Omega)}$ presented in Figure \ref{fig.three} for $\beta=5$ and $\beta=4.01$ shows that the time decay behaves exponentially, as predicted by Theorem \ref{thm.large}.
The decay rates (excluding the initial decay) 
are $-4.37$ for $\beta=5$ and $-1.03$ 
for $\beta=4.01$, and they decrease for smaller values of $\det A$.
We have also observed an exponential decay when $\gamma=0$
with smaller decay rates.

\begin{figure}[ht]
\includegraphics[width=0.49\textwidth]{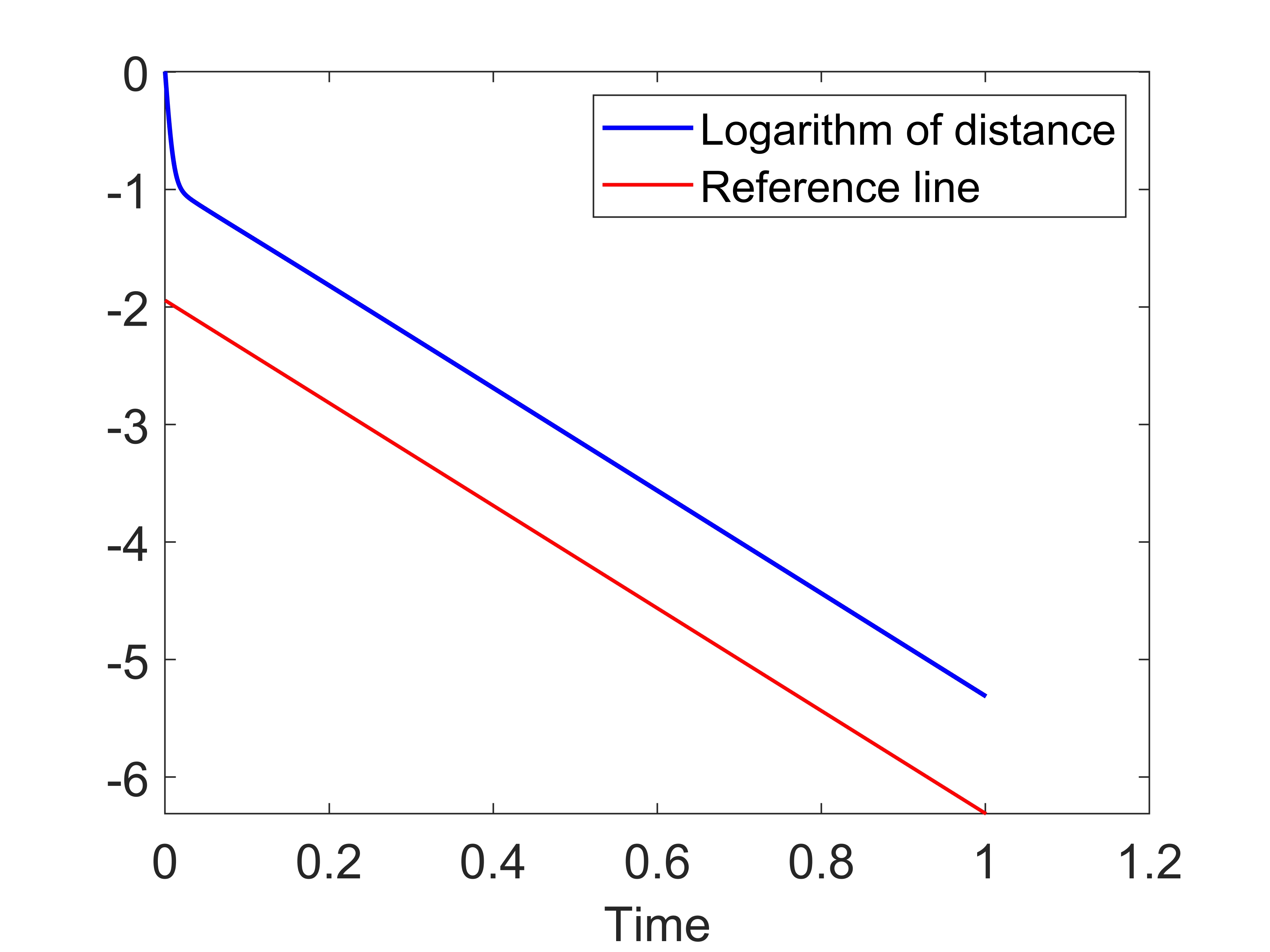}
\includegraphics[width=0.49\textwidth]{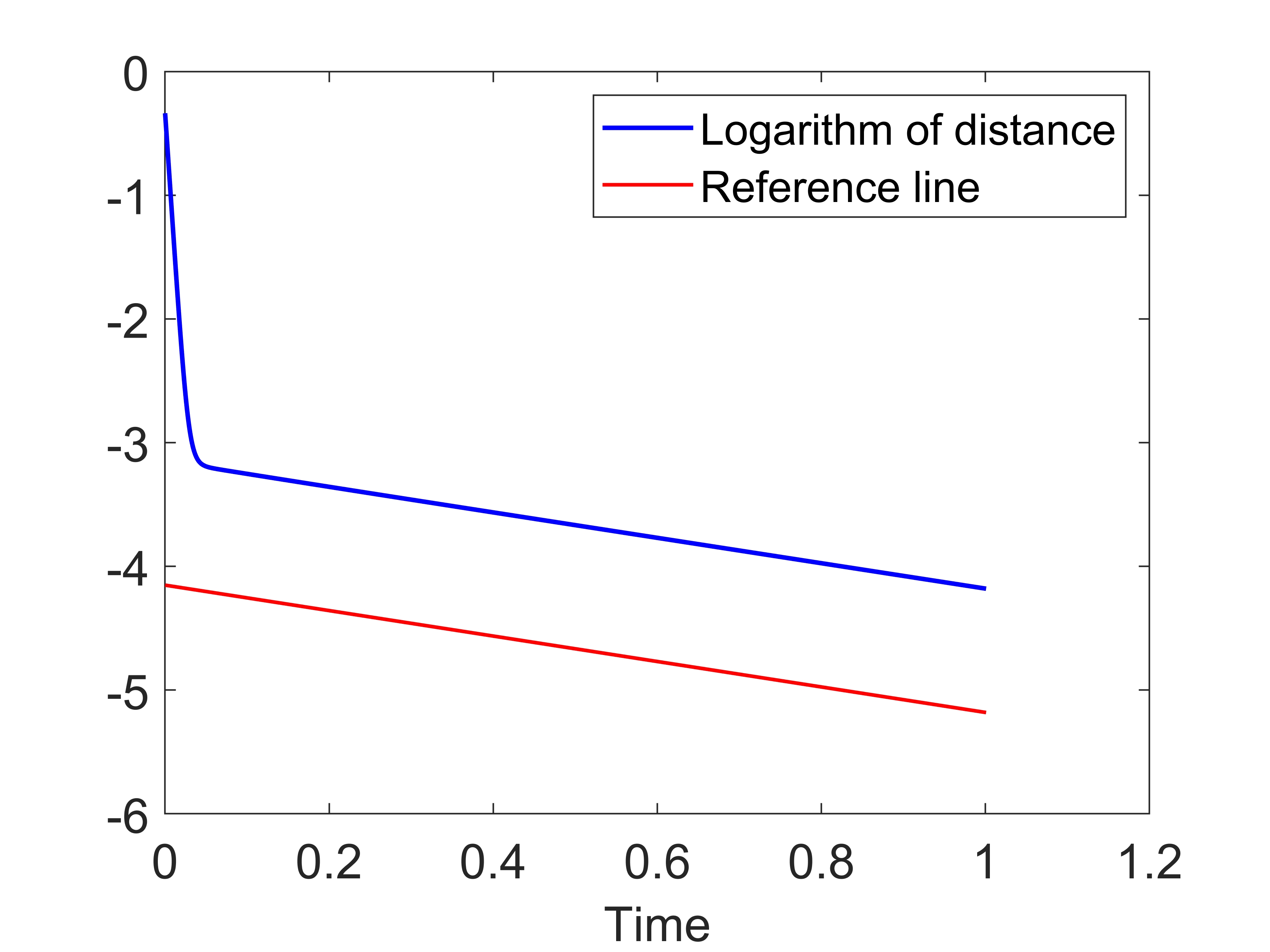}
\caption{Semilogarithmic plot for $\|A^{1/2}(u^k-\bar{u})\|_{L^2(\Omega)}$ 
versus time $t_k$.}
\label{fig.three}
\end{figure}

\subsection{Fourth example: Convergence rate in time}

We choose the values for $A$ and $u^0$ as in the previous example as well as
$\gamma=0$, $\Delta x=2^{-9}$, and $\Delta t=(10\cdot 2^p)^{-1}$ with $p=1,\ldots,8$.
The reference solution $u_{\rm ref}$
is computed with the time step size $\Delta t=(10\cdot 2^9)^{-1}$.
As expected, the convergence rate at time $T=0.02$, shown in Figure \ref{fig.four} for two different values of $\beta$, 
is about two, even in the case $\det A=0$.

\begin{figure}[ht]
\includegraphics[width=0.49\textwidth]{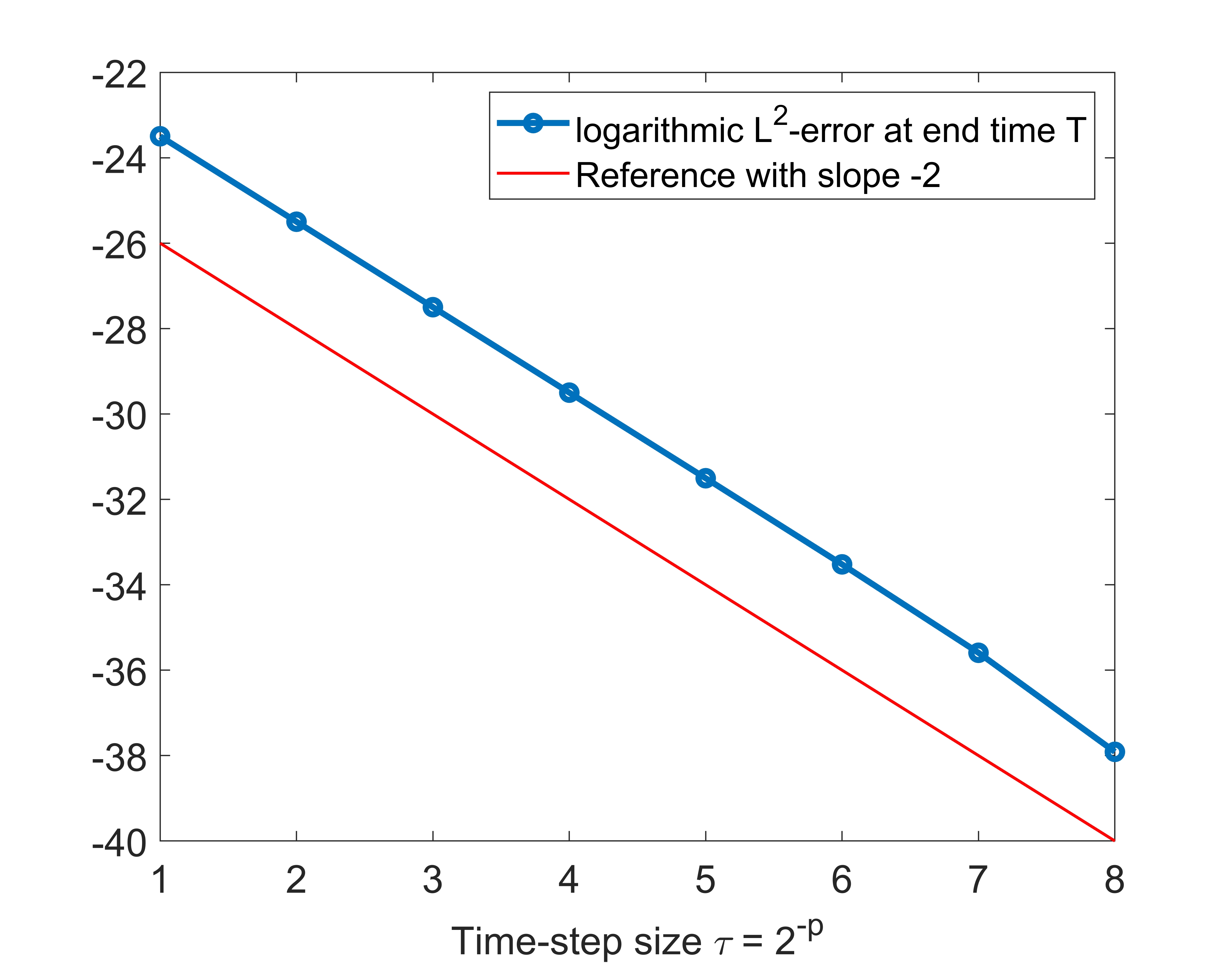}
\includegraphics[width=0.49\textwidth]{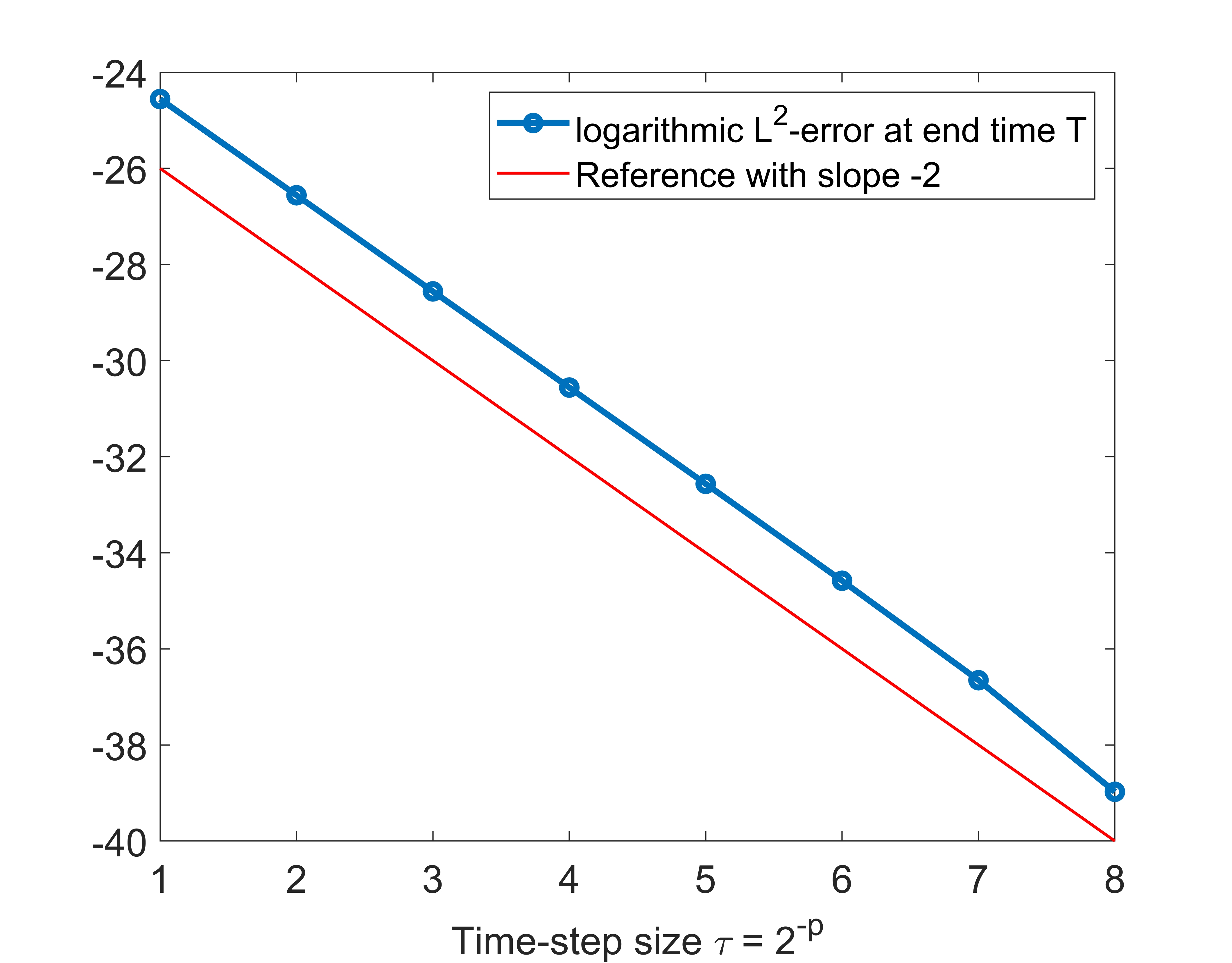}
\caption{Discrete $L^2(\Omega)$ error 
$\|A^{1/2}(u^{(\Delta t)}-u_{\rm ref})(T)\|_{L^2(\Omega)}$
versus time step size $\Delta t=(10\cdot 2^{p})^{-1}$ for $p=1,\ldots,8$
for $\beta=5$ (left) and $\beta=4$ (right).}
\label{fig.four}
\end{figure}


\end{document}